\theoremstyle{plain}
\newtheorem{thm}{Theorem}[section]
\newtheorem{lem}{Lemma}[section]
\newtheorem{cor}{Corollary}[section]
\newtheorem{rem}{Remark}[section]
\newtheorem{dfn}{Definition}[section]
\begin{document}
\large

\begin{center}
{\Large \bf It\^o-Wiener expansion for functionals of the Arratia's flow $n$-point motion}

\vskip5pt

Georgii Riabov

{\it Institute of Mathematics, Kyiv \\ National Academy of Sciences of Ukraine}

\end{center}

\begin{abstract}
The structure of square integrable functionals measurable with respect to the $n-$point motion of the Arratia flow is studied. Relying on the change of measure technique, a new construction of multiple stochastic integrals along trajectories of the flow is presented. The analogue of the It\^o-Wiener expansion for square integrable functionals from the Arratia's flow $n-$point motion is constructed.
\end{abstract}

\section{Introduction}

The present paper continues the study of orthogonal structure of square integrable functionals from coalescing stochastic flows undertaken in \cite{Dorogovtsev_Riabov}. The main object of our considerations is the Arratia flow on the real line. It is a family
$\{x(u,t):u\in \mathbb{R},t\geq 0\}$ of random variables such that

1) for every $u\in \mathbb{R}$ $x(u,\cdot)$ is a continuous square integrable martingale with respect to the filtration $\mathcal{F}^x_t=\sigma(\{x(v,s):v\in\mathbb{R},s\leq t\});$

2) $x(u,0)=u;$

3) $<x(u,\cdot),x(v,\cdot)>(t)=(t-\tau_{u,v})_+,$ where $\tau_{u,v}=\inf\{t\geq 0:x(u,t)=x(v,t)\}.$

The Arratia flow was constructed in \cite{Arratia}. Informally, it represents the motion of Brownian particles that start from every point of $\mathbb{R}$ and move independently until some of the particles meet each other. Thereafter these particles coalesce and continue their motion as  one particle. Given $u\in \mathbb{R}^n$ the $\mathbb{R}^n-$valued process $x_u(t)=(x(u_1,t).\ldots,x(u_n,t)),$ $ t\geq 0$ is called the $n-$point motion of the Arratia flow starting from $u$. It is a Feller process in $\mathbb{R}^n$ \cite[Prop. 4.2]{LeJan_Raimond}.

Development of stochastic analysis for the Arratia flow was initiated by A. A. Dorogovtsev in \cite{Dorogovtsev_integral} where the stochastic integral with respect to the Arratia flow was built. It was applied to prove analogues of the Clark representation theorem, the Girsanov theorem and to define the Fourier-Wiener transform for functionals of the Arratia flow \cite{Dorogovtsev_Clark, Malovichko, Dorogovtsev_Fourier}.  Let $\sigma_n$ be the moment when all particles $\{x(u,\cdot),u\in [0,1]\}$ have coalesced into exactly $n$ particles, i.e.
$$
\sigma_n=\inf\{t\geq 0: |x([0,1],t)|=n\}.
$$
Denote the trajectories of the $n$ particles on the interval $[\sigma_{n},\sigma_{n-1})$ by $(\eta^n_1,\ldots,\eta^n_n)$. By definition \cite{Dorogovtsev_integral}, the stochastic integral with respect to the Arratia flow is the following sum of one-dimensional stochastic integrals with respect to individual trajectories
\begin{equation}
\label{23_02}
\sum^\infty_{n=1} \sum^n_{l=1} \int^{\sigma_{n-1}}_{\sigma_{n}} a^n_l (t) d\eta^n_l(t).
\end{equation}
This construction and its applications in \cite{Dorogovtsev_Clark, Malovichko, Dorogovtsev_Fourier} rely heavily on the fact that individual trajectory $x(u,\cdot)$ is a Brownian motion.

For $n>1$ the $n-$point motion $(x(u_1,\cdot).\ldots,x(u_n,\cdot))$ is no longer Gaussian process. Moreover, the filtration generated by the Arratia flow is an example of a black noise in the terminology of B. S. Tsirelson \cite{Tsirelson} (i.e. the Gaussian part of the filtration is trivial). Understanding such filtration was the main motivation of the research undertaken in \cite{Dorogovtsev_Riabov}. In this paper we investigate the structure of square integrable random variables measurable with respect to the finite-point motion  $x_u$ of the Arratia flow.

Among the main instruments in the description of functionals from the Gaussian noise are the It\^o-Wiener expansion and its specification -- the Krylov-Veretennikov expansion \cite{KV}. These notions proved to be fruitfull in the study of filtrations with non-trivial Gaussian parts as well \cite{LeJan_Raimond}.
Consider the Brownian motion $\{w(t), t\geq 0\}.$ Let $L^2(w)$ be the space of square integrable random variables measurable with respect to $\sigma(w).$ The It\^o-Wiener expansion is the representation of $L^2(w)$ as a Hilbert sum of pairwise orthogonal subspaces \cite{Cameron_Martin}
$$
L^2(w)=\oplus^\infty_{n=0} I_n(L^2_{symm}(\mathbb{R}^n_+)),
$$
where $I_n:L^2_{symm}(\mathbb{R}^n_+)\to L^2(w)$ is the operator of $n-$fold stochastic integration with respect to $w,$
$$
I_n a=\int^{\infty}_0 \int^{t_n}_0\ldots \int^{t_2}_0 a(t_1,\ldots ,t_n) dw(t_1) \ldots dw(t_n).
$$

An analogous decomposition holds in a more general case of $L^2(\beta)$, where $\beta$ is a Gaussian random measure on a certain measure space \cite{Ito}.

In the case of Arratia flow, multiple stochastic integrals
\begin{equation}
\label{11}
\int^{\infty}_0\int^{t_n}_0\ldots \int^{t_2}_0 a(t_1,\ldots,t_n) dx(u_1,t_1)\ldots dx(u_n,t_n)
\end{equation}
of different multiplicity are no longer orthogonal  (which in turn comes from the randomness of the quadratic covariation $<x(u,\cdot),x(v,\cdot)>$). This obstacle makes integrals \eqref{11} an inappropriate tool in description of functionals from the Arratia flow. For example, the expansion of a random variable as a series of integrals \eqref{11} is nonunique \cite[Ex. 1]{Dorogovtsev_Riabov}. Analogous problem for the stopped Brownian motion was treated in \cite{DGS} and \cite{Dorogovtsev_Riabov}. Let $\tau$ be the moment when the Brownian motion $w$ has reached a level $a>0.$ Again, multiple stochastic integrals
\begin{equation}
\label{23_02_1}
\int^{\tau}_0 \int^{t_n}_0 \ldots \int^{t_2}_0 a(t_1,\ldots ,t_n) dw(t_1) \ldots dw(t_n)
\end{equation}
of different multiplicity fail to be orthogonal (due to the randomness of the  quadratic variation $<w(\cdot\wedge \tau)>$). An application of the Gram-Schmidt orthogonalization procedure to integrals of the kind \eqref{23_02_1} was studied in \cite{DGS}. However, the structure of integrals \eqref{23_02_1} occured to be too complicated either to find a closed expression for resulting orthogonal objects or to calculate the orthogonal expansion for a specific functional via this procedure. It was noted in \cite{Dorogovtsev_Riabov} that rather to apply classical orthogonalization procedure, a simple transformation of differentials in  \eqref{23_02_1} leads to orthogonal multiple stochastic integrals. Moreover, it was shown that thus constructed integrals constitute an analogue of the It\^o-Wiener expansion for the stopped Brownian motion. Namely, stochastic integrals
\begin{equation}
\label{12}
\begin{gathered}
\int^{\tau}_0 \int^{t_n}_0 \ldots \int^{t_2}_0 a(t_1,\ldots,t_n) (dw(t_1)-f(w(t_1),t_1,t_n)dt_1)\ldots\\
\ldots (dw(t_{n-1})-f(w(t_{n-1}),t_{n-1},t_n)dt_{n-1})dw(t_n),
\end{gathered}
\end{equation}
with $f(y,s,t)=\partial_y \log \mathbb{P}(\forall \ r\in[s,t] \ y+w(r-s)<g(r)),$ corresponding to different $n$ are orthogonal, and any square integrable $w(\cdot\wedge \tau)-$measurable random variable can be represented as a series of integrals \eqref{12}. Thus, the space $L^2(w(\cdot\wedge \tau))$ of all square integrable functionals of $w(\cdot\wedge \tau)$ is the Hilbert sum of pairwise orthogonal subspaces of multiple stochastic integrals \eqref{12}. Moreover, the space of $n-$fold multiple stochastic integrals \eqref{12} is naturally isometric to the space $L^2(\mathbb{P}(\tau>t_n)dt),$ as the squared norm of the integral \eqref{12} equals
$$
\int^{\infty}_0 \int^{t_n}_0 \ldots \int^{t_2}_0 \mathbb{P}(\tau>t_n) a(t_1,\ldots ,t_n)^2dt_1\ldots dt_n.
$$
As opposed to the Gaussian case, stochastic integrals \eqref{12} are no longer polynomials from the stopped Brownian motion.

In the present paper we adopt the approach of \cite{Dorogovtsev_Riabov} to study the structure of the space $L^2(x_u)$ of all square integrable functionals measurable with respect to the $n-$point motion $x_u$ of the Arratia flow. We find suitable transformations of differentials in \eqref{11} that lead to multiple stochastic integrals with the properties

1) multiple stochastic integrals of different multiplicity are orthogonal in $L^2(x_u);$

2) the space of multiple stochastic integrals of a fixed multiplicity is closed in  $L^2(x_u)$ (and in fact is naturally isometric to certain $L^2$ space of integrands);

3) the Hilbert sum of all spaces of multiple stochastic integrals coincides with $L^2(x_u);$

\noindent 
thus defining an analogue of the It\^o-Wiener expansion of the space  $L^2(x_u).$

Next we introduce the notation and briefly describe the construction. In the end of the Introduction we discuss applications of our results to the description of square integrable functionals measurable with respect to the whole Arratia flow $x.$

{\it Notations.}

For any metric space $\mathcal{X},$ $\mathcal{B}(\mathcal{X})$ denotes the Borel $\sigma-$field on $\mathcal{X}.$

Following regions will be used: 

\noindent
$\mathcal{S}^n=\{u\in \mathbb{R}^n: u_1<\ldots <u_n\};$ $\mathcal{S}^n_+=\{u\in \mathcal{S}^n: u_1>0\};$ $\mathcal{S}^{n_1,\ldots,n_m}_+=\mathcal{S}^{n_1}_+\times\ldots\times \mathcal{S}^{n_m}_+.
$

$\mathcal{C}^n$ is the space of all continuous functions $\omega:\mathbb{R}_+\to \mathbb{R}^n$ equipped with the metric of uniform convergence on compacts. $\mathcal{C}^n([0,T])$ is the space of all continuous functions $f:[0,T]\to \mathbb{R}^n$ equipped with the uniform norm.

$\mathcal{B}^n=\mathcal{B}(\mathcal{C}^n)$ is the Borel $\sigma-$field on $\mathcal{C}^n.$

$w:\mathbb{R}_+\times \mathcal{C}^n \to \mathbb{R}^n$ is the canonical process on $\mathcal{C}^n$, i.e. $w(t,\omega)=\omega(t).$

$\mathcal{C}^n$ is equipped with the natural filtration $(\mathcal{B}^n_t)_{t\geq 0},$ i.e. $\mathcal{B}^n_t=\sigma(w(s):0\leq s \leq t).$

$\mathcal{P}$ is the $\sigma-$field of progressively measurable sets on $\mathbb{R}_+\times \mathcal{C}^n.$ It is a $\sigma-$field of all subsets $A\subset \mathbb{R}_+\times \mathcal{C}^n,$ such that for all $t\geq 0$
$$
A\cap [0,t]\times \mathcal{C}^n \in \mathcal{B}([0,t])\times \mathcal{B}^n_t
$$
(see, for example, \cite[Ch. I, \S 4]{Revuz_Yor}). 
Progressively measurable processes arise naturally as integrands in stochastic integrals with respect to the Brownian motion \cite[Ch. IV, \S 2]{Revuz_Yor}. 

Given $u\in \mathbb{R}^n,$ $\mu^u$ denotes the Wiener measure on $(\mathcal{C}^n,\mathcal{B}^n)$ that corresponds to the standard $n-$dimensional Brownian motion starting from $u.$ For every $k\in \{1,\ldots,n\},$  $\mathcal{I}^u_k$ denotes the operator of stochastic integration with respect to the Brownian motion $w_k$ on $(\mathcal{C}^n,\mathcal{B}^n,\mu^u)$:
\begin{equation}
\label{02-1}
\begin{gathered}
\mathcal{I}^u_k:L^2(\mathbb{R}_+\times \mathcal{C}^n,\mathcal{P},dt\times \mu^u(d\omega))\to L^2(\mathcal{C}^n,\mathcal{B}^n,\mu^u), \\
\mathcal{I}^u_k h=\int^\infty_0 h(t)dw_k(t).
\end{gathered}
\end{equation}
According to the Clark representation formula \cite[Ch.5, Th.(3.5)]{Revuz_Yor} each functional $g\in L^2(\mathcal{C}^n,\mathcal{B}^n,\mu^u)$ is uniquely represented as a sum
$$
g=\mathbb{E}^{\mu^u}g + \sum^n_{k=1} \mathcal{I}^u_k g_k.
$$
Denote $\mathcal{Q}^u_k g= g_k,$ so that $\mathcal{Q}^u_k: L^2(\mathcal{C}^n,\mathcal{B}^n,\mu^u) \to L^2(\mathbb{R}_+\times \mathcal{C}^n,\mathcal{P},dt\times \mu^u(d\omega)),$ in such a way that for each  $g\in L^2(\mathcal{C}^n,\mathcal{B}^n,\mu^u),$
\begin{equation}
\label{02-2}
g=\mathbb{E}^{\mu^u}g + \sum^n_{k=1} \mathcal{I}^u_k (\mathcal{Q}^u_k g).
\end{equation}

For all $(k_1,\ldots,k_d)\in \{1,\ldots,n\}^d,$  $I^u_{k_1,\ldots ,k_d}$ denotes  the operator of $d-$fold stochastic integration with respect to Brownian motions $w_{k_1},\ldots ,w_{k_d}$ on $(\mathcal{C}^n,\mathcal{B}^n,\mu^u);$
\begin{equation}
\label{02-3}
\begin{gathered}
I^u_{k_1,\ldots ,k_d}: L^2(\mathcal{S}^d_+)\to L^2(\mathcal{C}^n,\mathcal{B}^n,\mu^u), \\
I^u_{k_1,\ldots ,k_d}a=\int^\infty_0 \ldots \int^{t_2}_0 a(t)dw_{k_1}(t_1)\ldots dw_{k_d}(t_d).
\end{gathered}
\end{equation}
Following formulas hold
$$
\mathbb{E}^{\mu^u} I^u_{k_1,\ldots ,k_d} aI^u_{l_1,\ldots ,l_m} b=\delta_{(k_1,\ldots,k_d),(l_1,\ldots ,l_m)}\int_{\mathbb{S}^d_+} a(t)b(t)dt;
$$
\begin{equation}
\label{13}
\begin{gathered}
I^u_{k_1,\ldots ,k_d} a=\int^\infty_0 \bigg(\int^t_0 \ldots \int^{t_2}_0 a(t_1,\ldots,t_{d-1},t) dw_{k_1}(t_1)\ldots \\
dw_{k_{d-1}}(t_{d-1})\bigg) dw_{k_d}(t)=\int^\infty_0 I^u_{k_1,\ldots ,k_{d-1}} (\tilde{a}(t)) dw_{k_d}(t)=
\\ 
=\mathcal{I}^u_{k_d} (I^u_{k_1,\ldots ,k_{d-1}} (\tilde{a}(\cdot))), \ \mu^u-\mbox{a.e.},
\end{gathered}
\end{equation}
where  
$$
\tilde{a}(t)(t_1,\ldots,t_{d-1})=
\begin{cases} 
a(t_1,\ldots,t_{d-1},t), \ t_{d-1}<t, \\
0, t_{d-1}\geq t.
\end{cases}
$$ 
Existence of a progressively measurable modification for the process \break $\{I^u_{k_1,\ldots ,k_{d-1}}
(\tilde{a}(t)),t\geq 0\}$ follows from the existence of a progressively measurable modification for any measurable adapted process (see, for example, \cite{Ondrejat}).

$\mathcal{K}_n$ is the set of all finite sequences of elements $1,\ldots,n,$ i.e. $\mathcal{K}_n=\cup^\infty_{d=0}\{1,\ldots ,n\}^d;$ the length of a sequence $k\in\mathcal{K}_n$ is denoted by $|k|;$ $\mathcal{K}_{1,\ldots,n}=\mathcal{K}_{1}\times\ldots\times \mathcal{K}_{n}.$ With these notations the It\^o-Wiener expansion for the $n-$dimensional Brownian motion is
\begin{equation}
\label{02_4}
L^2(\mathcal{C}^n,\mathcal{B}^n,\mu^u)=\oplus_{k\in \mathcal{K}_n} I^u_k (L^2(\mathcal{S}^{|k|}_+)).
\end{equation}
Respectively, operators $Q^u_k:L^2(\mathcal{C}^n,\mathcal{B}^n,\mu^u)\to L^2(\mathcal{S}^{|k|}_+)$ are defined in such a way that every $g\in L^2(\mathcal{C}^n,\mathcal{B}^n,\mu^u)$ has a unique series representation
\begin{equation}
\label{02-5}
g=\sum_{k\in \mathcal{K}_n} I^u_k(Q^u_kg ).
\end{equation}

{\it Multiple stochastic integrals with respect to $x_u$.}

Given $u\in\mathcal{S}^n,$ $\nu^u$ denotes the distribution in $(\mathcal{C}^n,\mathcal{B}^n)$ of the $n-$point motion $x_u$  of the Arratia flow started from $u.$  $\tau_{\mathcal{S}^n}$ is the moment when two of trajectories of $x_u$ have met each other. Measures $\nu^u$ and $\mu^u$ coincide on the $\sigma-$field $\mathcal{B}^n_{\tau_{\mathcal{S}^n}}$ \cite{Dorogovtsev_flow}. In particular, when $n=1$, $\nu^u$ is the distribution of the Brownian motion started from $u.$ Hence, the It\^o-Wiener expansion of the space $L^2(\mathcal{C}^1,\mathcal{B}^1,\nu^u)$ is determined by the multiple stochastic integrals along the Brownian motion $w,$ i.e.
$$
L^2(\mathcal{C}^1,\mathcal{B}^1,\nu^u)=\oplus_{k\in \mathcal{K}_1} I^u_k (L^2(\mathcal{S}^{|k|}_+)).
$$

If $n>1,$ the moment $\tau_{\mathcal{S}^n}$ is $\nu^u-$a.s. finite and only two of the trajectories have coalesced, i.e. the set 
$\{w_1(\tau_{\mathcal{S}^n}),\ldots,w_n(\tau_{\mathcal{S}^n})\}$ consists of exactly $n-1$ points. Enumerate these points in the ascending order by $p^{n-1}_1,\ldots ,p^{n-1}_{n-1}$ $(p^{n-1}_1<\ldots<p^{n-1}_{n-1}).$  According to the strong Markov property, the distribution of $x_u$ after the moment $\tau_{\mathcal{S}^n}$ coincides with the distribution of $x_{p^{n-1}}$ (to be precise, it coincides with the distribution of  $x_{p^{n-1}}$ lifted to the space $\mathcal{C}^n$ by repeating a coordinate $j$ for which $w_j(\tau_{\mathcal{S}^n})=w_{j+1}(\tau_{\mathcal{S}^n})=p^{n-1}_j$). Hence, the space $(\mathcal{C}^n,\mathcal{B}^n,\nu^u)$ is naturally isomorphic to the space with mixture of measures $(\mathcal{C}^{n-1}\times\mathcal{C}^{n},\mathcal{B}^{n-1}\times \mathcal{B}^n_{\tau_{\mathcal{S}^n}},\nu^{p^{n-1}(\omega^n)}(d\omega^{n-1})\mu^u(d\omega^n)).$

Consider at first the case $n=2$ and a functional $f\in L^2(\mathcal{C}^2,\mathcal{B}^2,\nu^u).$ To give an example of how our construction works, let us describe the orthogonal expansion of $f$ leaving measurability questions apart. The considerations below are justified in Theorems \ref{stopped}, \ref{finite-dimensional} and Lemmata \ref{measurability1}, \ref{measurability}.

Due to the isomorphism described above, $f=f(\omega^1,\omega^2),$ where $\omega^1$ refers to the only trajectory left after the coalescence, and $\omega^2$ refers to the 2-point motion before the coalescence. For every fixed $\omega^2,$  $f(\cdot,\omega^2)\in L^2(\mathcal{C}^1,\mathcal{B}^1,\mu^{p^1(\omega^2)}).$ Hence, $f(\cdot,\omega^2)$ possesses an It\^o-Wiener expansion
$$
f(\cdot,\omega^2)=\sum_{k^1\in\mathcal{K}_1}I^{p^1(\omega^2)}_{k^1}(a_{k^1}(\cdot,\omega^2)).
$$
In this expansion, each kernel $a_{k^1}$ is a functional of $\omega^2$ and in fact
$$
a_{k^1}(t^1,\cdot)\in L^2(\mathcal{C}^{2},\mathcal{B}^2_{\tau_{\mathcal{S}^2}},\mu^u).
$$
By the Theorem \ref{stopped}, $a_{k^1}(t^1,\cdot)$ can be further expanded as a sum of multiple stochastic integrals with respect to the stopped Brownian motion \eqref{281}
$$
a_{k^1}(t^1,\cdot)=\sum_{k^2\in\mathcal{K}_2}\mathcal{J}^{u,\mathcal{S}^2}_{k^2}(a_{k^1,k^2}(t^1,\cdot)).
$$
Finally, in $L^2(\mathcal{C}^2,\mathcal{B}^2,\nu^u)$
\begin{equation}
\label{intro1}
f=\sum_{(k^1,k^2)\in\mathcal{K}_{1,2}}\mathcal{A}^u_{k^1,k^2}a_{k^1,k^2},
\end{equation}
where
\begin{equation}
\label{intro11}
\mathcal{A}^u_{k^1,k^2}a_{k^1,k^2}(\omega^1,\omega^2)=I^{p^1(\omega^2)}_{k^1}(\mathcal{J}^{u,\mathcal{S}^2}_{k^2}(a_{k^1,k^2}(t^1,\cdot))(\omega^2))(\omega^1),
\end{equation}
all the summands are pairwise orthogonal. Theorem \ref{stopped} implies that the squared norm of $I^{p^1}_{k^1}(\mathcal{J}^{u,\mathcal{S}^2}_{k^2}a_{k^1,k^2})$ equals 
\begin{equation}
\label{intro2}
\int_{\mathcal{S}^{|k^1|,|k^2|}_+}a_{k^1,k^2}(t^1,t^2)^2\alpha_{\mathcal{S}^{2}}(u,t^2_{|k^2|})dt^1dt^2,
\end{equation}
 $\alpha_{\mathcal{S}^2}(u,t)$ is the probability that trajectories of a two-dimensional Brownian motion started from $u$ haven't met up to the moment $t.$
$\mathcal{A}^u_{k^1,k^2}$ is the operator of multiple stochastic integration with respect to $x_u.$ The index $k^1$ defines the multiplicity of integrals along the only trajectory left after the  coalescence and the index $k^2$ defines the order of differentials and the  multiplicity of integrals along trajectories before the coalescence. Respectively, the kernel $a_{k^1,k^2}$ is a function of $(t^1,t^2),$ where $t^j=(t^j_1,\ldots,t^j_{|k^j|})$ varies over the $|k^j|-$dimensional simplex $\mathcal{S}^{|k^j|}_+.$ To handle questions like measurability  of the expression in  \eqref{intro11}, we give following definition. Let $(\mathcal{X},\mathcal{B}),$ $(\Omega,\mathcal{F})$ be measurable spaces, $(\mu^\omega)_{\omega\in \Omega}$ be a regular family of measures on $(\mathcal{X},\mathcal{B}),$ that is a mapping $(B,\omega)\to \mu^\omega(B)$ of $\mathcal{B}\times \Omega$ into $[0,\infty],$ such that

1) for all $\omega\in \Omega$ $\mu^\omega$ is a (possibly infinite) measure on $(\mathcal{X},\mathcal{B});$

2) for every $B\in \mathcal{B}$ the mapping $\omega\to \mu^\omega(B)$ is $\mathcal{F}-$measurable.

\noindent
For a detailed exposition of the theory of regular measures see \cite[Ch. 10]{Bog_measure}.

\begin{dfn}
\label{measurable_family} Assume that for each $\omega\in \Omega,$
$\xi^{\omega}$ is a measurable function on $(\mathcal{X},\mathcal{B}).$ If there exists measurable mapping $h:\mathcal{X}\times\Omega\to \mathbb{R}$ such that
$$
\forall \omega\in \Omega \ h(\cdot,\omega)=\xi^\omega,  \ \mu^\omega-\mbox{a.s.,}
$$
then we will say that a family $\{\xi^\omega\}_{\omega\in \Omega}$ can be realized as a measurable function on $\mathcal{X}\times\Omega$ with respect to the family $\{\mu^\omega\}_{\omega\in \Omega}.$ When all measures $\mu^\omega$ are equal to some measure $\mu$ we will say that a family $\{\xi^\omega\}_{\omega\in \Omega}$ can be realized as a measurable function with respect to the measure $\mu.$
\end{dfn}

\noindent
Section 4 is devoted to two general results on measurable realizations which cover all the measurability issues of our construction.

In section 3 the described approach will be carried out to define multiple stochastic integrals with respect to $x_u$ that produce an analogue of the It\^o-Wiener expansion of the space $L^2(\mathcal{C}^n,\mathcal{B}^n,\nu^u)$ (Theorem \ref{finite-dimensional}). The only modification comes from the formula  \eqref{intro2}. Consider, for example, $n=3.$ Then every $f\in L^2(\mathcal{C}^3,\mathcal{B}^3,\nu^u)$ is a functional of $(\omega^2,\omega^3),$ where  $\omega^2$ refers to two trajectories left after the first coalescence, and $\omega^3$ refers to the 3-point motion before the first coalescence. As above, $f(\cdot,\omega^3)$ is expanded into the sum
$$
f(\cdot,\omega^3)=\sum_{(k^1,k^2)\in\mathcal{K}_1\times \mathcal{K}_2}\mathcal{A}^{p^2(\omega^3)}_{k^1,k^2}(a_{k^1,k^2}(\cdot,\omega^3)).
$$
Due to \eqref{intro2} the squared norm of the summand $\mathcal{A}^{p^2}_{k^1,k^2}a_{k^1,k^2}$ equals
\begin{equation}
\label{intro3}
\int_{\mathcal{C}^3}\int_{\mathcal{S}^{|k^1|,|k^2|}_+}a_{k^1,k^2}(t^1,t^2,\omega^3)^2\alpha_{\mathcal{S}^{2}}(p^2(\omega^3),t^2_{|k^2|})dt^1dt^2\mu^u(d\omega^3),
\end{equation}
what means that $a_{k^1,k^2}(t^1,t^2,\cdot)\in L^2(\mathcal{C}^3,\mathcal{B}^3_{\tau_{\mathcal{S}^3}},\alpha_{\mathcal{S}^{2}}(p^2(\omega^3),t^2_{|k^2|})\mu^u(d\omega^3)).$ \break Hence, for $a_{k^1,k^2}(t^1,t^2,\cdot)$ to become a square integrable functional from the stopped Brownian motion, an additional weight $\alpha_{\mathcal{S}^{2}}(p^2(\omega^3),t^2_{|k^2|})$ is needed. The problem of expanding such functionals as a series of pairwise orthogonal multiple stochastic integrals is solved in the Lemma \ref{stopped_absolutely_continuous} for a class of measures $\varkappa\ll \mu^u$ on $(\mathcal{C}^n,\mathcal{B}^n_{\tau}).$ Obtained results are used in the inductive definition \eqref{operators} of multiple stochastic integrals with respect to $n-$point motion of the Arratia flow. 

In Theorem \ref{finite-dimensional} it is proved that each $f\in L^2(\mathcal{C}^n,\mathcal{B}^n,\nu^u)$ is uniquely represented as a series of pairwise orthogonal multiple stochastic integrals
$$
f=\sum_{(k^1,\ldots,k^n)\in\mathcal{K}_{1,\ldots,n}} \mathcal{A}^u_{k^1,\ldots,k^n}a_{k^1,\ldots,k^n}.
$$
$\mathcal{A}^u_{k^1,\ldots,k^n}$ is an operator of multiple stochastic integration with respect to the trajectories of $x(u,\cdot).$ It contains $|k^j|$ integrals over a region where exactly $j$ particles, integrals are taken with respect to the trajectories of these particles transformed in the manner of \eqref{12}. Differentials are transformed with the help of mappings \eqref{25_02_1} and \eqref{31_1}. The squared norm of the summand $\mathcal{A}^u_{k^1,\ldots,k^n}a_{k^1,\ldots,k^n}$ equals
$$
\int_{\mathcal{S}^{|k^1|,\ldots,|k^n|}_+}a_{k^1,\ldots,k^n}(t^1,\ldots,t^n)^2\rho_{t^2_{|k^2|},\ldots,t^n_{|k^n|}}(u)dt^1\ldots dt^n,
$$
where functions $\rho_{t^2,\ldots,t^n}$ are defined in \eqref{rho_eq}.

{\it Multiple stochastic integrals with respect to $x.$}

Obtained results indicate possible way to define multiple stochastic integrals with respect to the whole Arratia flow. Recall that $\sigma_n$ is the moment when all particles $\{x(u,\cdot),u\in [0,1]\}$ have coalesced into exactly $n$ particles. Denote $p^n_1<\ldots<p^n_n$ the positions of $n$ remaining particles at the moment $\sigma_n.$ The strong Markov property of $x$ imply that $L^2(\Omega,\mathcal{F}^x,\mathbb{P})$ is naturally isometric to $L^2(\mathcal{C}^{n}\times\Omega,\mathcal{B}^{n}\times \mathcal{F}^x_{\sigma_n},\nu^{p^{n}(\omega_n)}(d\omega^{n})\mathbb{P}(d\omega_n)),$ where $\omega^n$ refers to $n$ trajectories left after the moment $\sigma_n$ and $\omega_n$ refers to the flow $x$ before the moment $\sigma_n.$ From the
Theorem \ref{finite-dimensional} it follows that each $\alpha\in L^2(\Omega,\mathcal{F}^x,\mathbb{P})$ has a series expansion
\begin{equation}
\label{24_1}
\alpha=\sum_{(k_1,\ldots,k_n)\in\mathcal{K}_{1,\ldots,n}} \mathcal{A}^{p^n(\omega_n)}_{k_1,\ldots,k_n}(a_{k_1,\ldots,k_n}(\cdot,\omega_n)).
\end{equation}
Denote $\mathcal{K}_{\mathbb{N}}=\prod_{n\geq 1} \mathcal{K}_n.$ Due to the orthogonality of summands in \eqref{24_1}, for each $k\in \mathcal{K}_\mathbb{N},$ $k=(k_1,k_2,\ldots),$
elements $\mathcal{A}^{p^n(\omega_n)}_{k_1,\ldots,k_n}(a_{k_1,\ldots,k_n}(\cdot,\omega_n))$ converge to some element $P_k \alpha$ in $L^2(\Omega,\mathcal{F}^x,\mathbb{P}).$ $(P_k \alpha)_{k\in {\mathcal{K}_\mathbb{N}}}$ is a continuum family of pairwise orthogonal elements, which may be considered as multiple stochastic integrals of some kernels with respect to $x.$ If such point of view is possible, obtained multiple stochastic integrals would be natural candidates to form an analogue of the It\^o-Wiener expansion of $L^2(\Omega,\mathcal{F}^x,\mathbb{P}).$ As for now, the description of $P_k\alpha$ and construction of the expansion of $L^2(\Omega,\mathcal{F}^x,\mathbb{P})$ are open problems. We leave them for further investigations.

\section{Stopped Brownian Motion}

Let $G\subset \mathbb{R}^n$ be an open connected set and $\aleph:G\to \mathbb{R}^n$ be an infinitely differentiable vector field on $G.$ Given $\omega\in \mathcal{C}^n,$ consider the following integral equation
\begin{equation}
\label{30_1}
\xi(t)=\omega(t)+\int^t_0 \aleph(\xi(s))ds, \ t\geq 0.
\end{equation}
Of course, the solution to \eqref{30_1} may not exist for all $t>0.$ The precise definition of the solution and its properties are given below. Though the result seems known we add the proof because our situation differs from the usual one - we seek for a solution that is an adapted functional on the space $(\mathcal{C}^n,\mathcal{B}^n)$ without referring to any probability measure. 

Consider the set 
$$
\mathcal{D}=\{(T,\omega)\in \mathbb{R}_+\times \mathcal{C}^n: \ \mbox{there exists continuous function} \ \xi:[0,T]\to G, 
$$
$$
\mbox{such that for all} \ t\in [0,T] \ \mbox{\eqref{30_1} holds}\}.
$$

\begin{lem}
\label{add4}
1) Each section $\mathcal{D}_\omega$ is the interval of the form $[0,\tau(\omega))$  $(\mathcal{D}_\omega=\emptyset,$ if $\tau(\omega)=0$ and $\mathcal{D}_\omega=\mathbb{R}_+,$ if $\tau(\omega)=\infty).$  In particular, $\mathcal{D}=\{(T,\omega)\in \mathbb{R}_+\times \mathcal{C}^n:\tau(\omega)>T\}.$

2) For each $\omega\in \mathcal{C}^n$ there exists unique continuous function $\xi(\cdot,\omega):[0,\tau(\omega)) \to G,$ such that for all $t\in [0,\tau(\omega))$ \eqref{30_1} holds.

3) $\mathcal{D}\in \mathcal{P},$ i.e. $\mathcal{D}$ is a progressively measurable subset of $\mathbb{R}_+\times \mathcal{C}^n.$

4) $\tau:\mathcal{C}^n\to [0,\infty]$  is a stopping time.

5) $\xi:\mathcal{D}\to G$ is a progressively measurable process.

6) Relatively to Wiener measures $\mu^u, \ u\in G,$ $\xi$ is a strong Markov process \cite[Ch. III, \S 3]{Dynkin}, i.e. for any $\mathcal{B}^n-$stopping time $\sigma\leq \tau,$ $t\geq 0$ and a Borel set $A\subset G,$
$$
\mu^u(\xi(t+\sigma)\in A, \ t+\sigma<\tau|\mathcal{B}^n_{\sigma})=1_{\sigma<\tau}\mu^{v} (\xi(t)\in A, \ t<\tau)|_{v=\xi(\sigma)}.  
$$
\end{lem}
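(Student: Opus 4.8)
\emph{Proof proposal.} The plan is to construct the solution of \eqref{30_1} pathwise by a localization argument, read off the (progressive) measurability assertions from the corresponding properties of the localized solutions, and then obtain the strong Markov property from a deterministic cocycle identity combined with the strong Markov property of Brownian motion. Throughout, everything is done directly on $(\mathcal{C}^n,\mathcal{B}^n)$ with the raw natural filtration $(\mathcal{B}^n_t)$, no probability measure being used until Part 6.

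First I would fix an exhaustion $G=\bigcup_k G_k$ with $\overline{G_k}$ compact, $\overline{G_k}\subset G_{k+1}$, and replace $\aleph$ by bounded globally Lipschitz fields $\aleph_k:\mathbb{R}^n\to\mathbb{R}^n$ agreeing with $\aleph$ on a neighbourhood of $\overline{G_k}$. For each $\omega$ the modified equation $\xi_k(t)=\omega(t)+\int^t_0\aleph_k(\xi_k(s))ds$ has a unique global solution $\xi_k(\cdot,\omega)$ by Picard--Lindel\"of (applied to $\eta_k=\xi_k-\omega$, which solves $\dot\eta_k=\aleph_k(\omega(t)+\eta_k)$, $\eta_k(0)=0$); running the Picard iteration shows $\xi_k$ is continuous in $t$ and adapted (its restriction to $[0,t]$ depends only on $\omega|_{[0,t]}$), and a Gronwall comparison shows it is continuous as a map $\mathcal{C}^n\to\mathcal{C}^n$, so $\xi_k$ is a progressively measurable process. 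Setting $\tau_k(\omega)=\inf\{t\ge0:\xi_k(t,\omega)\notin G_k\}$ — a stopping time, being the hitting time of the closed set $\mathbb{R}^n\setminus G_k$ by the continuous adapted process $\xi_k$ — one has that $\xi_k$ solves \eqref{30_1} on $[0,\tau_k(\omega))$. A Gronwall estimate using the local Lipschitz constant of $\aleph$ on the compact union of the images of two solutions gives uniqueness of $G$-valued solutions of \eqref{30_1}; this makes the family $\{\xi_k\}$ consistent on overlaps, so $\xi(t,\omega):=\xi_k(t,\omega)$ for $t<\tau_k(\omega)$ is well defined and solves \eqref{30_1} on $[0,\tau(\omega))$ with $\tau:=\sup_k\tau_k$. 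Conversely, any solution on $[0,T]$ has compact image in some $G_k$, hence coincides with $\xi_k$ there, which forces $T<\tau_k\le\tau(\omega)$; this yields $\mathcal{D}_\omega=[0,\tau(\omega))$ and uniqueness of the maximal solution (Parts 1, 2). Part 4 follows since a countable supremum of stopping times is a stopping time. For Part 3 one writes $\mathcal{D}=\bigcup_k\{(T,\omega):T<\tau_k(\omega)\}$ and checks membership in $\mathcal{P}$ using $\{\tau_k>q\}\in\mathcal{B}^n_q$ for rational $q$; for Part 5 one uses $\xi=\xi_k$ on the progressively measurable sets $\{t<\tau_k(\omega)\}$, which increase to $\mathcal{D}$, and that each $\xi_k$ is progressively measurable.

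For Part 6, fix $u\in G$ and a stopping time $\sigma\le\tau$. Splitting the integral in \eqref{30_1} at $\sigma$ and invoking uniqueness of the maximal solution gives, on $\{\sigma<\tau\}$, the cocycle identity $\xi(\sigma+t,\omega)=\xi(t,\Theta_\sigma\omega)$ together with $\tau(\omega)=\sigma+\tau(\Theta_\sigma\omega)$, where $\Theta_\sigma\omega$ is the path $r\mapsto\xi(\sigma,\omega)+\omega(\sigma+r)-\omega(\sigma)$. Under $\mu^u$, the strong Markov property of $n$-dimensional Brownian motion says that, conditionally on $\mathcal{B}^n_\sigma$ and on $\{\sigma<\infty\}$, the recentred path $r\mapsto\omega(\sigma+r)-\omega(\sigma)$ is $\mu^0$-distributed and independent of $\mathcal{B}^n_\sigma$, while $\xi(\sigma,\omega)$ is $\mathcal{B}^n_\sigma$-measurable; hence $\Theta_\sigma\omega$ has $\mathcal{B}^n_\sigma$-conditional law equal to $\mu^v|_{v=\xi(\sigma)}$ (the push-forward of $\mu^0$ by $w\mapsto v+w$). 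Applying the standard freezing lemma to $\phi(v,\omega')=1_A(\xi(t,\omega'))1_{\{t<\tau(\omega')\}}$ — jointly measurable in $(v,\omega')$ by Parts 3 and 5 and the continuity of $(v,\omega)\mapsto v+\omega$, with $v\mapsto\mu^v(\{\phi(v,\cdot)=1\})$ Borel by Fubini — gives the asserted identity, the factor $1_{\sigma<\tau}$ absorbing the cases $\{\sigma=\tau\}$ and $\{\sigma=\infty\}$.

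The analytic core (existence, uniqueness, continuous dependence) is routine Picard--Lindel\"of/Gronwall theory; the part needing care — and the reason the proof is included — is to arrange the construction so that $\xi$ and $\tau$ are genuine adapted functionals on $(\mathcal{C}^n,\mathcal{B}^n)$ for the raw natural filtration, with no probability measure in sight. I expect the main obstacles to be the passage from the localized solutions to the identification $\mathcal{D}_\omega=[0,\tau(\omega))$ together with the progressive-measurability bookkeeping in Parts 3 and 5, and, in Part 6, the measurable dependence of $\mu^v$ on $v$ and the justification of freezing $v=\xi(\sigma)$ inside the conditional expectation.
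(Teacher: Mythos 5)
Your proposal is correct, and it reaches the measurability statements by a genuinely different route from the paper. For parts 3)--5) the paper works with the map $F^{T,\delta}(f)=f-\int_0^\cdot \aleph(f(s))ds$ on $\mathcal{C}^n([0,T];G^\delta)$, proves it is injective, and invokes the Souslin theorem to conclude that its image (and the relevant sections of $\mathcal{D}$ and of $\{\xi(T)\in\Delta\}$) are Borel; progressive measurability then comes from expressing $\mathcal{D}\cap([0,S]\times\mathcal{C}^n)$ as a countable union of such images. You instead truncate $\aleph$ to globally Lipschitz fields $\aleph_k$ adapted to an exhaustion $G=\bigcup_k G_k$, obtain global adapted solutions $\xi_k$ by Picard iteration, and realize $\xi$ and $\tau$ as $\xi=\xi_k$ on the stochastic intervals $[0,\tau_k)$ with $\tau=\sup_k\tau_k$; measurability then reduces to the elementary facts that hitting times of closed sets by continuous adapted processes are raw stopping times and that stochastic intervals are progressive. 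Your approach avoids analytic-set machinery entirely and gives the stopping-time property of $\tau$ and the identification $\mathcal{D}_\omega=[0,\tau(\omega))$ as byproducts of one construction, at the cost of the (routine) verification that the truncated solutions are consistent on overlaps; the paper's Souslin argument is shorter to state but relies on injective Borel images being Borel, and it must separately extend solutions step by step to identify the sections $\mathcal{D}_\omega$. Parts 1), 2) (local Picard--Lindel\"of plus a Gronwall uniqueness estimate on compact subsets of $G$) and part 6) (the cocycle identity $\xi(\sigma+\cdot,\omega)=\xi(\cdot,\theta_{\sigma,\xi(\sigma,\omega)}\omega)$ combined with the strong Markov property of the Wiener process and a freezing argument, with Borel dependence of $v\mapsto\mu^v(\phi)$ obtained from $\mu^v=\mu^0\circ\theta_v^{-1}$ and Fubini) are in substance the same as in the paper.
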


\begin{proof}
1) and 2). Assume that $T\in \mathcal{D}_\omega.$ Evidently,  $[0,T]\subset \mathcal{D}_\omega.$ There exists continuous function $\xi:[0,T]\to G,$ such that for all $t\in [0,T]$ \eqref{30_1} holds. We will show that for $\delta$ small enough, $\xi$ can be uniquely extended to continuous function on $[0,T+\delta]$ that satisfies \eqref{30_1} for all $t\in [0,T+\delta].$ To this end it is enough to prove that there exists continuous function $\xi_1:[0,\delta]\to G$ that satisfies
$$
\xi_1(t)=\omega_1(t)+\int^t_0 \aleph(\xi_1(s))ds, \ t\in[0,\delta],
$$
with $\omega_1(\cdot)=\xi(T)+\omega(\cdot+T)-\omega(T).$ The difference $\xi_2(t)=\xi_1(t)-\omega_1(t)$ must then be a solution to the Cauchy problem 
\begin{equation}
\label{08_04_1}
\begin{cases}
d\xi_2(t)=\aleph(\xi_2(t)+\omega_1(t))dt, \\
\xi_2(0)=0.
\end{cases}
\end{equation}
Denote $B(\delta)$ a closed ball in $\mathbb{R}^n$ with centre $0$ and radius $\delta.$ For small enough $\delta,$ a compact set 
$K=\{x+\omega_1(t): \ t\in [0,\delta], x\in B(\delta)\}$ is a subset of $G.$
Hence, $C=\sup_{K} |\nabla\aleph|<\infty,$ and for all $(t,x_1),(t,x_2)\in [0,\delta]\times B(\delta)$ one has
$$
|\aleph(x_1+\omega_1(t))-\aleph(x_2+\omega_1(t))|\leq C|x_1-x_2|.
$$
The existence and uniqueness of a solution to \eqref{08_04_1} now follows from \cite[Ch. 1, Th. 2.3]{CL}. 

3) Given $S\geq 0$ let us show that $\mathcal{D}\cap ( [0,S]\times \mathcal{C}^n)\in \mathcal{B}([0,S])\times \mathcal{B}^n_S.$ This is exactly what the progressive measurability means. Denote $G^{\delta}=\{x\in G: \mbox{dist}(x,\partial G)\geq \delta\}.$ For all $T\geq 0$ and $\delta>0$ the space $\mathcal{C}^n([0,T];G^\delta)$ of all continuous functions $f:[0,T]\to G^\delta$ is a closed subset of $\mathcal{C}^n([0,T]).$ In particular, it is a Polish space. The mapping $F^{T,\delta}:\mathcal{C}^n([0,T];G^\delta)\to \mathcal{C}^n([0,T]),$
$$
F^{T,\delta}(f)(t)=f(t)-\int^t_0 \aleph(f(s))ds, \ t\in [0,T],
$$
is injective (see part 1) of the proof). By the Souslin Theorem \cite[Th. 6.8.6]{Bog_measure}, the image $F^{T,\delta}(\mathcal{C}^n([0,T];G^\delta))$ is a Borel set in $\mathcal{C}^n([0,T]).$ Let $A_S$ be some dense countable set in $[0,S],$ such that $S\in A_S.$ Then 
$$
\mathcal{D}\cap ( [0,S]\times \mathcal{C}^n)=
\bigcup_{\delta>0} \bigcup_{T\in A_S} [0,T]\times F^{T,\delta}(\mathcal{C}^n([0,T];G^\delta)) \in \mathcal{B}([0,S])\times \mathcal{B}^n_S.
$$
It is enough to take union only in $T\in A_S,$ as every solution to \eqref{30_1} can be continued from a closed interval to an interval with the right end  in $A_S$ (see part 1) of the proof).

4) Follows from 3):
$$
\{\omega:\tau(\omega)>t\}=\mathcal{D}_t\in \mathcal{B}^n_t.
$$

5) By continuity of $\xi$ and \cite[Ch. 1, Prop. (4.8)]{Revuz_Yor}, it is enough to prove that $\xi(T)$ is  $\mathcal{B}^n_T-$measurable on the set $\{\tau>T\}=\mathcal{D}_T.$ Recall mappings $F^{T,\delta}$ from part 3) of the proof. Then, for every Borel set $\Delta\subset G,$
$$
\{\omega \in \mathcal{D}_T: \xi(T,\omega)\in \Delta\}=
$$
$$
=\bigcup_{\delta>0} \{\omega \in \mathcal{D}_T: \xi(T,\omega)\in \Delta \ \mbox{and} \ \xi([0,T],\omega)\subset G^\delta \}=
$$
$$
=\bigcup_{\delta>0} \{\omega \in \mathcal{D}_T: \ \omega|_{[0,T]} \in F^{T,\delta}(\{f\in \mathcal{C}^n([0,T];G^\delta): f(T)\in \Delta\})\}.
$$ 
By the Souslin Theorem, the latter set belongs to $\mathcal{B}^n_T.$

6) Introduce shift operators $\theta_{r,v}\omega=v+\omega(\cdot+r)-\omega(r).$ If $\tau(\omega)>s,$ then $\tau(\omega)>t+s$ if and only if $\tau(\theta_{s,\xi(s,\omega)})>t,$ and in this case
$$
\xi(s+\cdot,\omega)=\xi(\cdot,\theta_{s,\xi(s,\omega)}\omega) \ \mbox{on} \ [0,t].
$$ 
Hence, by the strong Markov property of the Wiener process \cite[Ch. 3, Cor. (3.6)]{Revuz_Yor},
$$
\mu^u(\xi(t+\sigma)\in \Delta,\tau>t+\sigma/\mathcal{B}^n_\sigma)=
$$
$$
=1_{\tau>\sigma}\mu^u(\xi(t,\theta_{\sigma,\xi(\sigma)}w) \in \Delta, \tau(\theta_{\sigma,\xi(\sigma)}w)>t /\mathcal{B}^n_\sigma)=
$$
$$
=1_{\tau>\sigma}\mu^v(\xi(t) \in \Delta, \tau>t)|_{v=\xi(\sigma)}.
$$
\end{proof}
$\tau$ will be referred to as the lifetime of the solution to \eqref{30_1}. The main result of this section is an analogue of the It\^o-Wiener expansion of the space $L^2(\mathcal{C}^n,\mathcal{B}^n_{\tau},\mu^u).$

Introduce the function 
\begin{equation}
\label{25_02_3}
\alpha(t,u)=\mu^u(\tau>t), \ u\in G, \ t\in\mathbb{R}. 
\end{equation}
In the section 3 we will write $\alpha_{\aleph,G}$ instead of $\alpha$ to indicate its  dependence on  the domain $G$ and the vector field $\aleph.$ Note, that $\alpha(t,u)=1$ for $t\leq 0.$

\begin{lem}
\label{probability}
1)  $\alpha$  satisfies the equation
\begin{equation}
\label{21}
\frac{\partial \alpha}{\partial t}(t,u) =\frac{1}{2} \Delta_u \alpha(t,u)+ (\aleph(u),\nabla_u \alpha(t,u)), t> 0 , \ u\in G.
\end{equation}

In particular, $\alpha$ is infinitely differentiable in $\mathbb{R}_+\times G.$

2) The Clark representation formula \cite[Ch.5, Th.(3.5)]{Revuz_Yor} for $1_{\tau>t}$ is
\begin{equation}
\label{22}
1_{\tau>t}=\alpha(t,u)+\int^{t\wedge \tau}_0 \nabla_u \alpha(t-s,\xi(s))dw(s), \ \mu^u-\mbox{a.s.}
\end{equation}

3) $\mathbb{E}^{\mu^u}[1_{\tau>t}/\mathcal{B}^n_s]=1_{\tau>t\wedge s}\alpha(t-t\wedge s,\xi(t\wedge s)), \ \mu^u-\mbox{a.s.}$
\end{lem}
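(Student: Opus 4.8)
The plan is to treat the three parts as a single circle of ideas built on the strong Markov property from Lemma \ref{add4}(6) together with classical facts about the heat semigroup and the Clark formula. First I would establish part 1) by identifying $\alpha(t,\cdot)$ as the solution of a parabolic Cauchy problem. Fix $u\in G$ and note that under $\mu^u$ the process $\xi$ solves the SDE $d\xi(t)=dw(t)+\aleph(\xi(t))dt$ with lifetime $\tau$ (the exit time from $G$). Hence $\xi$ is a diffusion with generator $L=\tfrac12\Delta+(\aleph,\nabla)$ killed on $\partial G$, and $\alpha(t,u)=\mu^u(\tau>t)=P_t\mathbf 1(u)$, where $P_t$ is the corresponding sub-Markov semigroup. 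Since $\aleph\in C^\infty(G)$, the operator $L$ is a smooth uniformly elliptic operator on $G$, so interior parabolic regularity (e.g. \cite[Ch. 1]{CL}-type arguments, or standard Schauder estimates) gives $\alpha\in C^\infty(\mathbb R_+\times G)$ and $\partial_t\alpha=L_u\alpha$ on $(0,\infty)\times G$, which is exactly \eqref{21}. I would phrase this via the substitution $v(t,u)=1-\alpha(t,u)=\mu^u(\tau\le t)$, which solves the same equation with zero initial data and boundary value $1$, and invoke the fact that such solutions are smooth in the interior.

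For part 2), apply the Clark representation formula \cite[Ch.5, Th.(3.5)]{Revuz_Yor} to the bounded functional $1_{\tau>t}\in L^2(\mathcal C^n,\mathcal B^n_{t},\mu^u)$. It suffices to compute the martingale $M(s)=\mathbb E^{\mu^u}[1_{\tau>t}\mid\mathcal B^n_s]$ for $s\le t$ and read off its integrand. By Lemma \ref{add4}(6), on the event $\{\tau>s\}$ the post-$s$ evolution of $\xi$ is an independent copy started from $\xi(s)$, so
\[
M(s)=1_{\tau>s}\,\mu^{v}(\tau>t-s)\big|_{v=\xi(s)}=1_{\tau>s}\,\alpha(t-s,\xi(s)),
\]
while on $\{\tau\le s\}$ we have $M(s)=0=1_{\tau>s}\alpha(t-s,\xi(s))$; this is precisely the formula claimed in part 3), with $s$ replaced by $t\wedge s$ to cover $s\ge t$. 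Now I would apply It\^o's formula to $\alpha(t-s,\xi(s))$ up to the stopping time $\tau$, using the smoothness from part 1) and the equation \eqref{21}: the drift terms cancel because $-\partial_t\alpha+L_u\alpha=0$, leaving $d\alpha(t-s,\xi(s))=\nabla_u\alpha(t-s,\xi(s))\,dw(s)$ on $[0,t\wedge\tau]$. Combining with the initial value $\alpha(t,u)$ yields \eqref{22}, and uniqueness in the Clark formula identifies the integrand, giving $\nabla_u\alpha(t-\cdot,\xi(\cdot))$ as the Clark kernel of $1_{\tau>t}$.

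Part 3) then follows either as the display already obtained above (the conditional expectation was computed directly from the strong Markov property) or, equivalently, by taking $\mathbb E^{\mu^u}[\,\cdot\mid\mathcal B^n_s]$ in \eqref{22} and using that the stochastic integral on $[0,t\wedge\tau]$ stopped at $s$ equals the integral on $[0,(t\wedge s)\wedge\tau]$. I expect the main obstacle to be the rigorous justification of the It\^o/regularity step near the boundary $\partial G$: one must make sure It\^o's formula is applied only on the stochastic interval $[0,t\wedge\tau)$ where $\xi$ stays in $G$ and $\alpha(t-\cdot,\cdot)$ is smooth, and that the boundary behaviour of $\alpha$ (it need not extend smoothly to $\partial G$) causes no difficulty — this is handled by localizing with the sets $G^\delta$ exactly as in the proof of Lemma \ref{add4}, stopping at the exit time from $G^\delta$ and letting $\delta\downarrow0$. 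A secondary technical point is checking that $1_{\tau>t}$ is genuinely $\mathcal B^n_t$-measurable and square integrable so that the Clark formula applies, but this is immediate from Lemma \ref{add4}(4).
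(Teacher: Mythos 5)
Your overall strategy coincides with the paper's: derive the parabolic equation for $\alpha$, apply It\^o's formula to $\alpha(t-s,\xi(s))$ so that the drift cancels, and obtain part 3) from the strong Markov property. The genuine gap is in part 2), at exactly the point you declare harmless. Stopping at the exit time $\tau_\delta$ from $G^\delta$ and letting $\delta\downarrow0$ does not by itself close the argument: on the event $\{\tau\le t-\varepsilon\}$ the It\^o identity leaves you with the term $\alpha(\varepsilon,\xi(\tau_\delta))$, evaluated at points converging to $\partial G$, plus a drift term $-\int_{\tau_\delta}^{t-\varepsilon}\partial_t\alpha(t-s,\xi(\tau_\delta))\,ds$ produced by the stopping, which your sketch omits entirely. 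To conclude that these contributions vanish you must prove that $\alpha(s,v)$ and $\partial_t\alpha(s,v)$ tend to $0$ as $v\to\partial G$; this is not automatic for a general open connected $G$ (irregular boundary points) and is not a consequence of localization. The paper supplies it in two steps: first for bounded $G$ with $C^\infty$ boundary and $\aleph$ the restriction of a compactly supported smooth field, where the explicit Green-function construction yields $\sup_{t\ge\varepsilon}\bigl(\alpha(t,u)+|\partial_t\alpha(t,u)|\bigr)\to0$ as $u\to\partial G$; then for general $G$ by exhausting it with smooth domains $G_n\Subset G$, proving \eqref{22} for each $G_n$ and passing to the limit, so that boundary behaviour of $\alpha$ on $\partial G$ itself is never invoked. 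You need one of these ingredients --- or an alternative such as exploiting the a.s. continuity of the bounded martingale $\mathbb{E}^{\mu^u}[1_{\tau>t}\,/\,\mathcal{B}^n_s]$, which you compute but do not use for this purpose --- to justify that the right-hand side of \eqref{22} is $0$ on $\{\tau\le t\}$.

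A secondary, more standard gap is in part 1): interior regularity or hypoellipticity upgrades a distributional solution to a classical one, but you never establish that $\alpha$ is a distributional solution of \eqref{21}; saying that $P_t\mathbf 1$ or $v=1-\alpha$ ``solves the same equation'' assumes what is to be proved. The paper derives this from the representation $\alpha(t,u)=\int_{-\infty}^{t}\int_{\partial B}\alpha(s,v)g_B(t-s,u,v)\,\sigma(dv)\,ds$ over small balls $B\Subset G$, verifies the equation for the exit density $g_B$ via the killed transition density, and only then invokes hypoellipticity of $\tfrac12\Delta_u+(\aleph,\nabla_u)-\partial_t$.
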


\begin{proof}
1) Consider an open ball $B\Subset G$ and a starting point $u\in B.$ Let $\tau_B$ be the moment when the process $\xi$ leaves $B.$ Denote $g_B(t,u,v)$ the distribution density of the pair $(\tau_B,\xi(\tau_B))$ with respect to $dt\times \sigma(dv),$ where $\sigma$ is the surface measure on $B.$ Such density exists since for all $t>0$ the distribution of $\xi(\cdot\wedge \tau_B\wedge t)$ is equivalent to the distribution of the stopped Brownian motion $w(\cdot \wedge \tau^w_B\wedge t),$ $\tau^w_B$ is the moment when $w$ leaves $B$  \cite[Th. 7. 10]{Liptser_Shiryaev}, while the distribution density of $(\tau^w_B,w(\tau^w_B))$ is known explicitly \cite{Hsu}. From the strong Markov property of $\xi$ following representation follows
\begin{equation}
\label{23}
\alpha(t,u)=\int^t_{-\infty}\int_{\partial B}\alpha(s,v)g_B(t-s,u,v)\sigma(dv)ds, \ t>0, u\in B.
\end{equation}
If one knows that $g_B(\cdot,v)$ satisfies \eqref{21} in $\mathbb{R}_+\times B$, then the representation \eqref{23} implies that  $\alpha$ solves \eqref{21} as a distribution on $\mathbb{R}_+\times B.$ Next, according to the hypoellipticity of the operator $\frac{1}{2} \Delta_u + (\aleph(u),\nabla_u)-\frac{\partial}{\partial t}$ \cite[Th. 3.4.1]{Stroock_PDE},  $\alpha$ is infinitely differentiable on $\mathbb{R}_+\times B$ and \eqref{21} holds in the usual sense.

Let us check \eqref{21} for $g_B.$ Denote $p_B(t,x,y)$ the transition density of the process $\xi$ killed at the moment $\tau_B$ \cite[\S 5.2]{Stroock_PDE}.  Due to the Markov property of $\xi$, 
$$
\int^\infty_t \int_A g_B(s,u,v)\sigma(dv)ds=\mu^u(\tau_B>t, \xi(\tau_B)\in A)=
$$
$$
=\int_B p_B(t,u,y) \int^\infty_0 \int_A g_B(s,y,v)\sigma(dv) ds dy.
$$
Which implies the representation
$$
g_B(t,u,v)=-\int_{B} \frac{\partial p_B}{\partial t} (t,u,y) \int^\infty_0 g_B(s,y,v)ds dy.
$$
$p_B(\cdot,y)$ satisfies \eqref{21} \cite[Th. 5.2.8]{Stroock_PDE}. Hence, another application of the hypoellipticity of $\frac{1}{2} \Delta_u + (\aleph(u),\nabla_u)-\frac{\partial}{\partial t}$ proves \eqref{21} for $g_B.$

2) At first we consider the case when $G$ is bounded with an infinitely smooth boundary and $\aleph$ is a restriction to $G$ of an infinitely differentiable compactly supported vector field on $\mathbb{R}^n.$ In this case there exist global solution $\xi$ of \eqref{30_1}. We will understand $\tau$ as the moment when $\xi$ leaves $G.$ In this case $\alpha(t,u)=\int_G p_G(t,u,y) dy,$ where $p_G$ is a transition density of the process $\xi$ killed at the moment $\tau$ \cite[\S 5.2]{Stroock_PDE}. $p_G$ is a Green function of the parabolic boundary value problem
$$
\begin{cases}
\frac{\partial f}{\partial t}(t,u) =\frac{1}{2} \Delta_u f(t,u)+ (\aleph(u),\nabla_u f(t,u)), \ u\in G, t> 0, \\
f(t,u)=0, \ u\in \partial G, t>0.
\end{cases}
$$
The explicit construction of the Green function $p_G$ \cite[\S VI.2.1]{Garroni_Menaldi} leads to
$$
\sup_{t\geq \varepsilon} \bigg(\alpha(t,u)+\bigg|\frac{\partial \alpha}{\partial t}(t,u)\bigg|\bigg)\to 0, \ u\to u_0\in\partial G.
$$

Recall that $G^{\delta}$ is the set of points of $G$ whose distance to $\partial G$ exceeds $\delta.$ Denote $\tau_\delta=\inf\{t\geq 0: \xi(t)\not\in G^\delta\}.$  From the It\^o's formula applied to the function $\alpha(t-s, \xi (s\wedge \tau_\delta)),$ $0\leq s\leq t-\varepsilon$ it follows that $\mu^u-$a.s.
\begin{equation}
\label{24}
\begin{gathered}
\alpha(\varepsilon, \xi((t-\varepsilon)\wedge \tau_\delta))=\alpha(t,u)+\int^{(t-\varepsilon)\wedge\tau_\delta}_0 \nabla_u\alpha(t-s,\xi(s))dw(s) \\
-1_{\tau_\delta<t-\varepsilon}\int^{t-\varepsilon}_{\tau_\delta}
\frac{\partial \alpha}{\partial t}(t-s, \xi(\tau_\delta))ds.
\end{gathered}
\end{equation}

\noindent
Consider the case $\tau>t.$ Then for small enough $\varepsilon$ and $\delta$ one has $\tau_\delta>t-\varepsilon$ and the left-hand side of \eqref{24} transforms into $\alpha(\varepsilon, \xi(t-\varepsilon)).$ $\xi(t-\varepsilon)\to \xi(t)\in G,$ so, by continuity of $\xi,$
$$
\alpha(\varepsilon, \xi(t-\varepsilon))\to \alpha(0,\xi(t))=1, \ \varepsilon\to 0.
$$
Hence, multiplying \eqref{24} by $1_{\tau>t}$ and taking limits $\varepsilon,\delta\to 0$ implies
the relation
\begin{equation}
\label{25}
1_{\tau>t}=1_{\tau>t} \alpha(t,u) +1_{\tau>t} \int^{t}_0 \nabla_u\alpha(t-s,\xi(s))dw(s).
\end{equation}
Consider the case $\tau<t-\varepsilon.$ Then for all $\delta,$ $\tau_\delta<t-\varepsilon$ and the left-hand side of \eqref{24} is 
$\alpha(\varepsilon, \xi(\tau_\delta)).$ $\xi(\tau_\delta)\to \xi(\tau)\in \partial G$ and, from the boundary conditions, one has 
$$
\alpha(\varepsilon, \xi(\tau_\delta))\to \alpha(\varepsilon, \xi(\tau))=0, \ \delta\to 0.
$$
Hence, multiplying \eqref{24} by $1_{\tau<t-\varepsilon}$ and taking the limit $\delta\to 0$ one gets
\begin{equation}
\label{26}
0=1_{\tau<t-\varepsilon} \alpha(t,u) + 1_{\tau<t-\varepsilon} \int^{\tau}_0 \nabla_u\alpha(t-s,\xi(s))dw(s).
\end{equation}
Finally, \eqref{25} and \eqref{26} imply the relation \eqref{22}.

According to \cite[Ch. 5, Th. 4.20]{Edmunds_Evans}, $G$ can be written as an increasing union of bounded domains  $G_n\Subset G$ with infinitely differentiable  boundaries. Denote $\tau_n=\inf\{t\geq 0: \xi(t)\not\in G_n\},$ $\alpha_n(t,u)=\mu^u(\tau_n>t).$ As $\tau_n\nearrow \tau,$ it follows that functions $\alpha_n$ converge pointwise to $\alpha.$ From the representation \eqref{23} it follows that all derivatives of $\alpha_n$ also converge to respective derivatives of $\alpha.$ Hence, \eqref{22} holds for any $G$ and $\aleph.$

The property 3) is an immediate consequence of the Markov property of $\xi.$

\end{proof}

Denote $\pi^{t,u}$ the probability on $(\mathcal{C}^n,\mathcal{B}^n)$ defined via the density
$$
\frac{d\pi^{t,u}}{d\mu^u}=\alpha(t,u)^{-1}1_{\tau>t}.
$$
In what follows we introduce a family of transformations of $\mathcal{C}^n$ that send $\pi^{t,u}$ into $\mu^u.$
Recall the equality $\mathcal{D}=\{(t,\omega):\tau(w)>t\}$ (Lemma \ref{add4}). Define mappings
\begin{equation}
\label{25_02_1}
\begin{gathered}
\phi(t,\omega)=1_{\tau(\omega)>t} (\omega(\cdot)-\int^{\cdot\wedge t}_0 \nabla_u \log \alpha(t-s,\xi(s,\omega))ds), \\
\Phi(t,\omega)=(t,\phi(t,\omega))
\end{gathered}
\end{equation}
and the set  $\mathcal{R}=\Phi(\mathcal{D}).$ Next Lemma states measurability properties of $\Phi$ with respect to the Borel $\sigma-$field $\mathcal{B}(\mathbb{R}_+\times \mathcal{C}^n)$ and the $\sigma-$field $\mathcal{P}$ of progressively measurable sets on $\mathbb{R}_+\times \mathcal{C}^n$ \cite[Ch. I, \S 4]{Revuz_Yor}.

\begin{lem}
\label{flow}
1) $\Phi:\mathbb{R}_+\times \mathcal{C}^n \to \mathbb{R}_+\times \mathcal{C}^n$ is both $\mathcal{B}(\mathbb{R}_+\times \mathcal{C}^n)/\mathcal{B}(\mathbb{R}_+\times \mathcal{C}^n)-$ and $\mathcal{P}/\mathcal{P}-$measurable;

2) $\mathcal{D},\mathcal{R}\in \mathcal{P}$ and $\Phi$ is a bimeasurable bijection of $\mathcal{D}$ onto $\mathcal{R}$ when both sets are simultaneously equipped either with the Borel or with the  progressively measurable $\sigma-$fields. In particular, there exists $\mathcal{B}(\mathbb{R}_+\times \mathcal{C}^n)/\mathcal{B}(\mathbb{R}_+\times \mathcal{C}^n)-$ and $\mathcal{P}/\mathcal{P}-$measurable mapping $\Psi:\mathbb{R}_+\times \mathcal{C}^n\to \mathbb{R}_+\times \mathcal{C}^n$ that coincides with $\Phi^{-1}$ on $\mathcal{R};$

3) For each $t\geq 0$
$$
\pi^{t,u}\circ \phi(t,\cdot)^{-1}=\mu^u;
$$

\end{lem}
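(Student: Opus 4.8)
The statement to prove (Lemma \ref{flow}): $\Phi$ is $\mathcal B/\mathcal B$- and $\mathcal P/\mathcal P$-measurable; $\mathcal D, \mathcal R\in\mathcal P$ and $\Phi|_{\mathcal D}$ is a bimeasurable bijection onto $\mathcal R$ (with a global measurable extension $\Psi$ of $\Phi^{-1}$); and $\pi^{t,u}\circ\phi(t,\cdot)^{-1}=\mu^u$.

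\emph{Strategy.} I would treat the three assertions separately: parts 1 and 2 are measurable‑selection arguments resting on Lemma~\ref{add4}, while part 3 is a Girsanov computation resting on Lemma~\ref{probability}. The soft measurability in part 1 is routine; the real work is part 2, and in particular the construction of the measurable inverse $\Psi$ and the progressive measurability of $\mathcal{R}$. For part 1, since $\Phi(t,\omega)=(t,\phi(t,\omega))$ everything reduces to $\phi$, and since $\mathcal{C}^n$ is Polish, $\mathcal{B}^n$ is generated by the evaluations $\omega\mapsto\omega(r)$ ($r\in\mathbb{Q}_+$) and $\mathcal{B}^n_t$ by those with $r\le t$; as $\phi(t,\omega)$ is by construction a genuine continuous function, it suffices to show $(t,\omega)\mapsto\phi(t,\omega)(r)\in\mathbb{R}^n$ is measurable for each $r$. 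From
$$
\phi(t,\omega)(r)=1_{\tau(\omega)>t}\Big(\omega(r)-\int_0^{r\wedge t}\nabla_u\log\alpha(t-s,\xi(s,\omega))\,ds\Big)
$$
this follows because $\mathcal{D}=\{\tau>t\}\in\mathcal{P}$ (hence Borel), $\xi$ is progressively measurable on $\mathcal{D}$ (Lemma~\ref{add4}), $\nabla_u\log\alpha$ is smooth on $\mathbb{R}_+\times G$ (Lemma~\ref{probability}; here $\alpha>0$ on $\mathbb{R}_+\times G$, since from any $u\in G$ the diffusion stays in $G$ up to time $t$ with positive probability), and integration in $s$ preserves joint measurability. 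For $\mathcal{P}/\mathcal{P}$‑measurability I would observe that, when $t,r\le T$, the value $\phi(t,\omega)(r)$ depends on $\omega$ only through $\omega|_{[0,\max(t,r)]}$ --- $\omega(r)$ through time $r$, the integral through time $r\wedge t$, the indicator through time $t$ (Lemma~\ref{add4}, part~3) --- so $(t,\omega)\mapsto\phi(t,\omega)$ is $\big(\mathcal{B}([0,T])\times\mathcal{B}^n_T\big)/\mathcal{B}^n_T$‑measurable on $[0,T]\times\mathcal{C}^n$; as $\Phi$ preserves time, $\Phi^{-1}(A)\cap([0,T]\times\mathcal{C}^n)\in\mathcal{B}([0,T])\times\mathcal{B}^n_T$ for all $A\in\mathcal{P}$.

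\emph{Part 2, injectivity and $\Psi$.} If $\Phi(t,\omega)=(t,\eta)$ with $(t,\omega)\in\mathcal{D}$, then $1_{\tau(\omega)>t}=1$; substituting $\omega=\xi(\cdot,\omega)-\int_0^{\cdot}\aleph(\xi(s,\omega))\,ds$ shows $\zeta:=\xi(\cdot,\omega)$ solves on $[0,\tau(\omega))$ the equation
$$
\zeta(s)=\eta(s)+\int_0^s\big(\aleph(\zeta(r))+\nabla_u\log\alpha(t-r,\zeta(r))\big)\,dr ,
$$
which is of type \eqref{30_1} for the vector field $c_t(r,x)=\aleph(x)+\nabla_u\log\alpha(t-r,x)$ (locally Lipschitz in $x$, equal to $\aleph$ for $r\ge t$ since $\alpha\equiv 1$ there). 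Uniqueness of solutions then determines $\zeta$ from $(t,\eta)$, and $\omega$ is recovered by $\omega(s)=\zeta(s)-\int_0^s\aleph(\zeta(r))\,dr$ for $s\le t$ and $\omega(s)=\eta(s)+\int_0^t\nabla_u\log\alpha(t-r,\zeta(r))\,dr$ for $s\ge t$. This proves $\Phi|_{\mathcal{D}}$ is injective, and read in reverse it defines $\Psi$: for arbitrary $(t,\eta)$ solve the displayed equation --- its lifetime $\sigma(t,\eta)$, solution map, and their Borel and progressive measurability being obtained exactly as in Lemma~\ref{add4}, with $t$ adjoined to the state to absorb both the parameter and the time‑dependence --- and put $\Psi(t,\eta)=(t,\psi(t,\eta))$, with $\psi(t,\eta)$ given by the two formulas for $\omega$ above when $\sigma(t,\eta)>t$ and $\psi(t,\eta)=0$ otherwise. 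A direct check (using $\nabla_u\log\alpha(t-s,\cdot)\equiv 0$ for $s>t$) gives $\Psi\circ\Phi=\mathrm{id}$ on $\mathcal{D}$ and $\Phi\circ\Psi=\mathrm{id}$ on $\mathcal{R}$, so $\Psi|_{\mathcal{R}}=\Phi^{-1}$; with part 1 this makes $\Phi:\mathcal{D}\to\mathcal{R}$ a bimeasurable bijection in both $\sigma$‑fields.

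\emph{Part 2, $\mathcal{R}\in\mathcal{P}$.} Here $\mathcal{D}\in\mathcal{P}$ is Lemma~\ref{add4}. For $\mathcal{R}$ the point is that, for $s\le t$, membership $(s,\eta)\in\mathcal{R}$ depends only on $(s,\eta|_{[0,t]})$: if $\eta=\phi(s,\omega)$ then $\eta(r)$ is a function of $\omega|_{[0,t]}$ for $r\le t$, whereas for $r>t$ one has $\eta(r)=\omega(r)-c(\omega|_{[0,s]})$ with $\omega(r)$ unconstrained. Hence $\mathcal{R}\cap([0,t]\times\mathcal{C}^n)$ is the preimage, under the $\big(\mathcal{B}([0,t])\times\mathcal{B}^n_t\big)$‑measurable restriction map $(s,\eta)\mapsto(s,\eta|_{[0,t]})$, of a set $\mathcal{R}_t\subset[0,t]\times\mathcal{C}^n([0,t])$, namely the image of the Borel set $\{(s,f):s\le t,\ \tau(f)>s\}$ under the truncated map $\Phi_t$; since $\Phi_t$ is injective (same reconstruction) and Borel, and $[0,t]\times\mathcal{C}^n([0,t])$ is Polish, $\mathcal{R}_t$ is Borel by the Souslin theorem \cite[Th.~6.8.6]{Bog_measure}, whence $\mathcal{R}\cap([0,t]\times\mathcal{C}^n)\in\mathcal{B}([0,t])\times\mathcal{B}^n_t$ for every $t$, i.e. $\mathcal{R}\in\mathcal{P}$. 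I expect this, together with the measurability of $\Psi$, to be the main obstacle, since it forces one to re‑run the Souslin‑theorem machinery of Lemma~\ref{add4} for a time‑inhomogeneous equation and to track carefully which segment of the trajectory each object sees.

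\emph{Part 3.} Put $Z_s=\mathbb{E}^{\mu^u}\big[\alpha(t,u)^{-1}1_{\tau>t}\mid\mathcal{B}^n_s\big]$, a nonnegative $\mu^u$‑martingale with $Z_0=1$ and $Z_\infty=d\pi^{t,u}/d\mu^u$. By Lemma~\ref{probability}, part~3, $Z_s=\alpha(t,u)^{-1}1_{\tau>t\wedge s}\,\alpha(t-t\wedge s,\xi(t\wedge s))$; on $\{\tau>t\}$, which is of full $\pi^{t,u}$‑measure, this equals $\alpha(t,u)^{-1}\alpha(t-s,\xi(s))$ for $s\le t$, is constant for $s\ge t$, and is strictly positive. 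Applying It\^o's formula to $\alpha(t-s,\xi(s))$, using $d\xi(s)=dw(s)+\aleph(\xi(s))\,ds$ and the equation \eqref{21} (whose drift cancels the It\^o correction), gives $d\,\alpha(t-s,\xi(s))=\nabla_u\alpha(t-s,\xi(s))\,dw(s)$ on $[0,\tau)$, hence on $\{\tau>t\}$
$$
dZ_s=Z_s\,\nabla_u\log\alpha(t-s,\xi(s))\,dw(s),\qquad s\le t ,
$$
so $Z$ is there the stochastic exponential of $\int_0^{\cdot\wedge t}\nabla_u\log\alpha(t-s,\xi(s))\,dw(s)$. By the Girsanov theorem, under $\pi^{t,u}$ the process
$$
w(\cdot)-\int_0^{\cdot}Z_s^{-1}\,d\langle w,Z\rangle_s=w(\cdot)-\int_0^{\cdot\wedge t}\nabla_u\log\alpha(t-s,\xi(s))\,ds=\phi(t,\cdot)
$$
is, by Lévy's characterization, a standard $n$‑dimensional Brownian motion started at $u$; that is, $\pi^{t,u}\circ\phi(t,\cdot)^{-1}=\mu^u$.
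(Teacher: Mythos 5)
Your proof is correct and its overall skeleton matches the paper's: part 1 by restricting $\Phi$ to $[0,T]\times\mathcal{C}^n$ and noting it preserves that strip, part 2 by injectivity plus the Souslin theorem applied to the time-$T$ restrictions, and part 3 by Girsanov. The one genuinely different ingredient is your treatment of injectivity and of the inverse: you observe that $\zeta=\xi(\cdot,\omega)$ solves a time-inhomogeneous equation of type \eqref{30_1} driven by $\eta=\phi(t,\omega)$ with drift $\aleph(x)+\nabla_u\log\alpha(t-r,x)$, so that uniqueness for that equation simultaneously gives injectivity of $\Phi|_{\mathcal{D}}$ and an explicit formula for $\Psi$ as a solution map. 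The paper instead takes two arbitrary preimages, differentiates the difference $\xi(\cdot,\omega)-\xi(\cdot,\omega')$, and applies Gronwall's lemma to force $\omega=\omega'$; the existence of a measurable $\Psi$ is then obtained abstractly from bimeasurability rather than constructed. Your route buys a concrete inverse (at the price of re-running the Lemma \ref{add4} machinery for a time-dependent field, which you correctly flag), while the paper's is shorter because it never needs to solve the reversed equation. In part 3 you re-derive the exponential-martingale form of the density via It\^o's formula and equation \eqref{21} instead of quoting the Clark representation from Lemma \ref{probability}(2); since that lemma is itself proved by It\^o's formula, this is only a cosmetic difference. Both arguments share the same minor regularity caveat near $r=t$ (smoothness of $\nabla_u\log\alpha(t-r,\cdot)$ as $r\uparrow t$), handled by working on $[0,t-\varepsilon]$ and passing to the limit, so I do not count it against you.
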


\begin{proof}

1) To prove $\mathcal{P}/\mathcal{P}-$measurability of $\Phi$ it is enough to check that the restriction of $\Phi$ onto $[0,T]\times \mathcal{C}^n$ is $\mathcal{B}([0,T])\times \mathcal{B}^n_T/\mathcal{P}-$measurable. As $\Phi([0,T]\times \mathcal{C}^n)\subset [0,T]\times \mathcal{C}^n,$ the needed measurability is equivalent to $\mathcal{B}([0,T])\times \mathcal{B}^n_T/\mathcal{B}([0,T])\times \mathcal{B}^n_T-$measurability, which immediately follows from the definition.

2) The inclusion $\mathcal{D}\in\mathcal{P}$ was proved in the Lemma \ref{add4}. Let us check that $\Phi$ is injective on $\mathcal{D}.$ Assume that $\Phi(t,\omega)=\Phi(t',\omega'),$ 
$\tau(\omega)>t,$ $\tau(\omega')>t'.$ Then $t=t',$ $\varphi(t,\omega)=\varphi(t,\omega').$ The last equality means that for all $r\geq 0$
\begin{equation}
\label{rev1}
\omega(r)-\int^{r\wedge t}_0 \nabla \log \alpha(t-s,\xi(s,\omega))ds=\omega'(r)-\int^{r\wedge t}_0 \nabla \log \alpha(t-s,\xi(s,\omega'))ds.
\end{equation}
In particular, $\omega(0)=\omega'(0)$ and $\xi(0,\omega)=\xi(0,\omega').$ After the moment $t$ the difference between $\omega$ and $\omega'$ is constant. Consider differences $\omega(r)-\omega'(r)$ and $\xi(r,\omega)-\xi(r,\omega'),$ $r\in [0,t].$ They are differentiable on $[0,t)$, as it follows from \eqref{rev1} and \eqref{30_1}, and 
$$
\frac{\partial (\omega-\omega')}{\partial r}=\nabla\log \alpha(t-r,\xi(r,\omega)) -\nabla\log \alpha(t-r,\xi(r,\omega')),
$$
$$
\frac{\partial (\xi(\cdot,\omega)-\xi(\cdot,\omega'))}{\partial r}=\aleph(\xi(r,\omega))-\aleph(\xi(r,\omega')) +\frac{\partial (\omega-\omega')}{\partial r}.
$$
Combining the relations one has
$$
\frac{\partial (\xi(\cdot,\omega)-\xi(\cdot,\omega'))}{\partial r}=\aleph(\xi(r,\omega))-\aleph(\xi(r,\omega')) +
$$
$$
+\nabla\log \alpha(t-r,\xi(r,\omega)) -\nabla\log \alpha(t-r,\xi(r,\omega')).
$$
In the view of the infinite differentiability of $\alpha$ (Lemma \ref{probability}) and $\aleph$, for all $r\leq t-\varepsilon$ 
$$
\bigg|\frac{\partial (\xi(\cdot,\omega)-\xi(\cdot,\omega'))}{\partial r}\bigg|\leq C |\xi(r,\omega)-\xi(r,\omega')| 
$$
By the Gronwall lemma, the difference  
$\xi(r,\omega)-\xi(r,\omega')$ is bounded by a multiple of $\xi(0,\omega)-\xi(0,\omega')=0.$ Hence, $\xi(r,\omega)=\xi(r,\omega')$ and $\omega=\omega'.$

It follows that $\Phi:\mathcal{D}\to \mathcal{R}$ is a bijection. The Souslin Theorem \cite[Th. 6.8.6]{Bog_measure} implies that the image under $\Phi$ of every Borel subset of $\mathcal{D}$ is a Borel subset of $\mathcal{R}.$ It remains to check that for every progressively measurable set $A\subset\mathcal{D}$ its image $\Phi(A)$ is progressively measurable. Given $T\geq 0$ denote $\Phi^T$ the restriction of $\Phi$ onto $[0,T]\times \mathcal{C}^n.$ The $\sigma-$field $\mathcal{B}([0,T])\times \mathcal{B}^n_T$ is naturally isomorphic to the Borel $\sigma-$filed $\mathcal{B}([0,T]\times \mathcal{C}^n([0,T])).$ Another application of the Souslin theorem gives the inclusion $\Phi(A)\bigcap ([0,T]\times \mathcal{C}^n)=\Phi^T(A\bigcap ([0,T]\times \mathcal{C}^n))\in\mathcal{B}([0,T])\times \mathcal{B}^n_T.$

3) The Clark representation for the density $\rho=\frac{d\pi^{t,u}}{d\mu^u}=\alpha(t,u)^{-1}1_{\tau>t}$ was derived in the Lemma \ref{probability}, 2): 
$$
\rho=1+\alpha(t,u)^{-1}\int^{t\wedge \tau}_0 \nabla_u \alpha(t-s,\xi(s))dw(s), \ \mu^u-\mbox{a.s.}
$$
Its conditional expectation with respect to the $\mathcal{B}^n_s$  is given by the Lemma \ref{probability}, 3):
$$
\mathbb{E}^{\mu^u}[\rho/\mathcal{B}^n_s]=1_{\tau>t\wedge s}\alpha(t,u)^{-1}\alpha(t-t\wedge s,\xi(t\wedge s)), \ \mu^u-\mbox{a.s.}
$$
According to the Girsanov theorem \cite[Ch.8, Th.(1.4)]{Revuz_Yor}, the process
$$
\varphi(t,\omega)(\cdot)=w(\cdot)-\int^{\cdot\wedge t}_0 \alpha(t-s,\xi(s,w))^{-1} \nabla \alpha (t-s,\xi(s,w)) ds
$$
is the Wiener process with respect to $\pi^{u,t}.$ In other words, $\pi^{t,u}\circ \varphi(t,\cdot)^{-1}=\mu^u.$

\end{proof}

The Lemma \ref{flow} makes it possible to define stochastic integrals $\mathcal{J}^{u,\aleph,G}_{k}a$ for all $k\in \mathcal{K}_n $ and $a\in L^2(\mathcal{S}^d_+,\alpha(t_d,u)dt),$ $d=|k|,$  as follows 
\begin{equation}
\label{281}
\begin{gathered}
\mathcal{J}^{u,\aleph,G}_k a = \mathcal{I}^u_{k_d} (1_{\cdot<\tau}(h\circ \Phi)(\cdot))\bigg(= \int^{\tau}_0 (h\circ \Phi)(t) dw_{k_d}(t)\bigg),
\end{gathered}
\end{equation}
where
$$
h(t,\omega)=I^u_{k_1,\ldots ,k_{d-1}}(\tilde{a}(t))(\omega), 
$$
$$  
\tilde{a}(t)(t_1,\ldots,t_{d-1})=
\begin{cases} 
a(t_1,\ldots,t_{d-1},t), \ t_{d-1}<t, \\
0, t_{d-1}\geq t.
\end{cases}
$$
Composition $(h\circ \Phi)(t,\omega)$ equals $I^u_{k_1,\ldots ,k_{d-1}}(\tilde{a}(t))(\varphi(t,\omega)).$ Comparing this expression to \eqref{13}, one sees that the differentials in the inner $d-1$ integral are transformed according to $\varphi.$ Progressive measurability of the integrand in \eqref{281} follows from the Lemma \ref{flow} and the existence of a progressively measurable modification of $\{I^u_{k_1,\ldots ,k_{d-1}} (\tilde{a}(t)), t\geq 0\}$ \cite{Ondrejat}. Following calculation shows that $\mathcal{J}^{u,\aleph,G}_{k}$ is well-defined. 
\begin{equation}
\label{27}
\mathbb{E}^{\mu^u} \int^\infty_0 1_{\tau>t}( h\circ \Phi)(t)^2 dt=\int^\infty_0 \alpha(t,u) \mathbb{E}^{\pi^{t,u}_{\aleph,G}} (h\circ \Phi)(t)^2 dt=
\end{equation}

$$
=\int^\infty_0 \alpha(t,u) \mathbb{E}^{\mu^u}h(t)^2 dt
=\int_{\mathcal{S}^d_+} \alpha(t_d,u) a(t)^2dt.
$$

In the next Theorem we prove that operators $\mathcal{J}^{u,\aleph,G}_k$ constitute the It\^o-Wiener expansion of the space $L^2(\mathcal{C}^n,\mathcal{B}^n_{\tau},\mu^u).$

\begin{thm}
\label{stopped}
1) Spaces $\mathcal{J}^{u,\aleph,G}_k(L^2(\mathcal{S}^{|k|}_+,\alpha(t_{|k|},u)dt))$ corresponding to different $k\in\mathcal{K}_n$ are pairwise orthogonal;

2) $\mathcal{J}^{u,\aleph,G}_k$ is an isometry of $L^2(\mathcal{S}^{|k|}_+,\alpha(t_{|k|},u)dt)$ into $L^2(\mathcal{C}^n,\mathcal{B}^n_{\tau},\mu^u);$

3) $L^2(\mathcal{C}^n,\mathcal{B}^n_{\tau},\mu^u)=\oplus_{k\in \mathcal{K}_n} \mathcal{J}^{u,\aleph,G}_k (L^2(\mathcal{S}^{|k|}_+,\alpha(t_{|k|},u)dt)).$

\end{thm}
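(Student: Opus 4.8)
The plan is to transfer the It\^o-Wiener expansion of $L^2(\mathcal{C}^n,\mathcal{B}^n,\mu^u)$ through the change of measure in Lemma \ref{flow}, using the fact that $\mathcal{B}^n_\tau$-measurable functionals can be studied via the conditional densities $\pi^{t,u}$. First I would set up the key identity: for a bounded $\mathcal{B}^n_\tau$-measurable $g$, one should write $g$ in terms of its behaviour after stopping. The natural device is that $\mathcal{B}^n_\tau$ is generated, roughly, by the family of $\mathcal{B}^n_t$-measurable functionals together with the event $\{\tau>t\}$, so I would approximate $g$ by $\mathbb{E}^{\mu^u}[g\mathbf{1}_{\tau>t}\mid \mathcal{B}^n_t]$-type quantities; more precisely, use that $g=\lim_t g\mathbf{1}_{\tau>t}$ in $L^2(\mu^u)$ (since $\tau$ is $\mu^u$-a.s. positive on the relevant set, or handle the $\tau=0$ and $\tau=\infty$ cases separately), and that $g\mathbf{1}_{\tau>t}$ is $\mathcal{B}^n_t$-measurable. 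For such a functional, $g\mathbf{1}_{\tau>t}=\alpha(t,u)\,(g\circ\text{something})$ under $\pi^{t,u}$: the clean statement is that $g\mathbf{1}_{\tau>t}$, viewed on $(\mathcal{C}^n,\mathcal{B}^n,\mu^u)$, equals $\alpha(t,u)\cdot \mathbb{E}^{\pi^{t,u}}[\,\cdot\mid \ldots]$ pushed forward by $\phi(t,\cdot)$, whose law under $\pi^{t,u}$ is $\mu^u$ by Lemma \ref{flow}(3).

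Next I would prove the three assertions in order. For 1) and 2), the orthogonality and isometry are essentially a computation already done in \eqref{27} and the remark that $h\circ\Phi$ pairs against another $h'\circ\Phi$ so that, after using $\pi^{t,u}\circ\phi(t,\cdot)^{-1}=\mu^u$ fibrewise in $t$, the inner expectation reduces to $\mathbb{E}^{\mu^u}I^u_{k_1,\ldots,k_{d-1}}(\tilde a(t))\,\overline{I^u_{l_1,\ldots,l_{d'-1}}(\tilde b(t))}$, which by the standard It\^o-Wiener orthogonality \eqref{13} vanishes unless the sequences agree (and the final differential $dw_{k_d}$ versus $dw_{l_{d'}}$ forces $k_d=l_{d'}$ by It\^o isometry for $\mathcal{I}^u$). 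So 1) and 2) follow by a direct pairing argument combined with the already-established one-dimensional integral identities; I would write this out carefully since the subtlety is that the weight $\alpha(t_{|k|},u)\,dt$ emerges precisely from the $\alpha(t,u)$ factor in \eqref{27}.

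The real content is completeness, assertion 3). The plan is: let $g\in L^2(\mathcal{C}^n,\mathcal{B}^n_\tau,\mu^u)$ be orthogonal to every space $\mathcal{J}^{u,\aleph,G}_k(L^2(\mathcal{S}^{|k|}_+,\alpha(t_{|k|},u)dt))$; I must show $g=0$. For each fixed $t$, consider $g\mathbf{1}_{\tau>t}\in L^2(\mathcal{C}^n,\mathcal{B}^n_t,\mu^u)$. Under $\pi^{t,u}$ this is $\alpha(t,u)$ times a bounded $\mathcal{B}^n_t$-measurable functional; push it back by $\Psi=\Phi^{-1}$ so that, since $\phi(t,\cdot)$ transports $\pi^{t,u}$ to $\mu^u$, the functional $(g\mathbf{1}_{\tau>t})\circ\phi(t,\cdot)^{-1}$ lies in $L^2(\mathcal{C}^n,\mathcal{B}^n_t,\mu^u)$ — here I need that $\phi(t,\cdot)$ maps $\mathcal{B}^n_t$ into $\mathcal{B}^n_t$, which follows from the progressive measurability in Lemma \ref{flow}. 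Apply the ordinary It\^o-Wiener/Krylov-Veretennikov expansion \eqref{02_4} to this transported functional: it expands in multiple integrals $I^u_{k_1,\ldots,k_d}(a(\cdot))$ with $a$ supported on $t_d\le t$. Transporting back, $g\mathbf{1}_{\tau>t}$ becomes a sum of $\mathcal{J}$-type integrals with kernels supported on $\{t_{|k|}\le t\}$, and the assumed orthogonality of $g$ forces all these kernels (restricted to $t_{|k|}\le t$) to vanish. Letting $t\to\infty$ kills every kernel, hence $g\mathbf{1}_{\tau>t}\to 0$, i.e. $g\mathbf{1}_{\tau=\infty}=0$; for the part on $\{\tau<\infty\}$ one uses that $g\mathbf{1}_{\tau>t}\to g\mathbf{1}_{\tau=\infty}$ only captures part of $g$, so one instead argues directly that $g\mathbf{1}_{\tau>t}=0$ for all $t$ already gives $g=0$ $\mu^u$-a.s. on $\{\tau>t\}$ for every $t$, hence on $\bigcup_t\{\tau>t\}=\mathcal{C}^n$ up to the $\mu^u$-null set $\{\tau=0\}$, and on $\{\tau=0\}$, $g$ is $\mathcal{B}^n_0$-measurable, a constant, which must be $0$ by orthogonality to the $k=\emptyset$ (zeroth-order) space.

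The main obstacle I anticipate is the bookkeeping around the fibrewise (in the parameter $t$) change of measure: one must be careful that the map $g\mapsto (g\mathbf{1}_{\tau>t})\circ\phi(t,\cdot)^{-1}$ genuinely preserves $\mathcal{B}^n_t$-measurability and that the transported kernels depend measurably on $t$ — this is exactly where Lemma \ref{flow}(1)--(2) (the $\mathcal{P}/\mathcal{P}$-bimeasurability of $\Phi$ and existence of the measurable inverse $\Psi$) does the work, but matching up the supports $\{t_d\le t\}$ as $t$ varies and passing to the limit requires a monotone-class or density argument that I would spell out. A secondary nuisance is the precise treatment of the cases $\tau(\omega)=0$ and $\tau(\omega)=\infty$; these should be dispatched by the remark in Lemma \ref{add4} that $\{\tau>t\}$ are the sections of the progressively measurable set $\mathcal{D}$, so they cause no real trouble beyond careful phrasing.
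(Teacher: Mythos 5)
Parts 1) and 2) of your proposal are fine and coincide with the paper's argument: the isometry and the orthogonality are exactly the computation \eqref{27} combined with the orthogonality of the ordinary multiple integrals $I^u_k$. The gap is in part 3). Your plan hinges on the claim that $g1_{\tau>t}$ lies in $L^2(\mathcal{C}^n,\mathcal{B}^n_t,\mu^u)$ for a $\mathcal{B}^n_\tau$-measurable $g$. This is false: by the definition of the stopped $\sigma$-field it is $g1_{\tau\le t}$ that is $\mathcal{B}^n_t$-measurable, whereas on the event $\{\tau>t\}$ the functional $g$ still depends on the path on $[t,\tau]$ (take $g=w_1(\tau\wedge T)$ with $T>t$). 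Consequently the transported functional $(g1_{\tau>t})\circ\phi(t,\cdot)^{-1}$ has no reason to expand with kernels supported on $\{t_d\le t\}$, and the "let $t\to\infty$" scheme collapses. There is a second, independent obstruction: even granting an expansion of $g\circ\phi(t,\cdot)^{-1}$ in the ordinary chaos of $\mu^u$, transporting it back produces functionals of the form $I^u_k(a)\circ\phi(t,\cdot)$, in which \emph{all} $|k|$ differentials are transformed at the single level $t$. These are not elements of $\mathcal{J}^{u,\aleph,G}_k(L^2(\mathcal{S}^{|k|}_+,\alpha(t_{|k|},u)dt))$: in \eqref{281} only the inner $|k|-1$ differentials are transformed, the transformation depends on the outer integration variable, and the outermost integral is a genuine $dw_{k_{|k|}}$-integral up to $\tau$. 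So orthogonality of $g$ to the $\mathcal{J}$-spaces gives you no control over the coefficients your construction produces, and the argument cannot close.

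The missing idea is to peel off the outermost integral \emph{first}: apply the Clark representation \eqref{02-2} to $f\in L^2(\mathcal{C}^n,\mathcal{B}^n_\tau,\mu^u)$ with $\mathbb{E}^{\mu^u}f=0$, obtaining $f=\sum_j\int_0^\tau\mathcal{Q}^u_jf(t)\,dw_j(t)$. The integrand $\mathcal{Q}^u_jf(t)$ \emph{is} $\mathcal{B}^n_t$-measurable by progressive measurability, so the fibrewise change of variables $\Psi$ of Lemma \ref{flow} may be applied to it for each $t$; the transported integrand $(\mathcal{Q}^u_jf\circ\Psi)(t)$ is then expanded in the ordinary It\^o--Wiener chaos of $\mu^u$, the measurable dependence of the kernels on $t$ is supplied by Lemma \ref{measurability1}, and a duality computation (pairing against an arbitrary $\int_0^\tau g(t)dw_j(t)$) identifies $\sum_{k}\mathcal{J}^{u,\aleph,G}_{k,j}a_{k,j}$ with $\int_0^\tau\mathcal{Q}^u_jf(t)dw_j(t)$. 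This is the paper's route; note that it establishes completeness constructively, by exhibiting the expansion of every $f$, rather than by testing against a hypothetical orthogonal complement.
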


\begin{proof}
Properties 1) and 2) immediately follow from the calculation \eqref{27}.

 Consider $f\in L^2(\mathcal{C}^n,\mathcal{B}^n_{\tau},\mu^u)$ with $\mathbb{E}^{\mu^u}f=0.$ According to the Clark representation theorem \cite[Ch.5, Th.(3.5)]{Revuz_Yor} and \eqref{02-2}
$$
f=\sum^n_{j=1} \int^{\tau}_0 \mathcal{Q}^u_j f (t)dw_j(t), \ \mu^u-\mbox{a.s.},
$$
and
$$
\mathbb{E}^{\mu^u}f^2=\sum^n_{j=1} \int^\infty_0 \mathbb{E}^{\mu^u} 1_{\tau>t} \mathcal{Q}^u_j f(t)^2dt.
$$
Consider the progressively measurable process $((\mathcal{Q}^u_j f )\circ \Psi)(t).$ Next identities follow from the Lemma \ref{flow}.
$$
\int^\infty_{0} \alpha(t,u) \mathbb{E}^{\mu^u} ((\mathcal{Q}^u_j f)\circ \Psi)(t)^2dt=\int^\infty_{0} \alpha(t,u) \mathbb{E}^{\pi^{t,u}} \mathcal{Q}^u_j f(t)^2dt=
$$
$$
=\int^\infty_0 \mathbb{E}^{\mu^u} 1_{\tau>t}\mathcal{Q}^u_j f(t)^2dt<\infty.
$$
Hence, $(\mathcal{Q}^u_j f )\circ \Psi$ can be viewed as a measurable mapping of $\mathbb{R}_+$ into $L^2(\mathcal{C}^n,\mathcal{B}^n,\mu^u).$ The It\^o-Wiener expansion then produces kernels $b_{k,j}(t)\in L^2(\mathcal{S}^{|k|}_+),$ $k\in \mathcal{K}_n$ such that for a.a. $t>0$
\begin{equation}
\label{210}
((\mathcal{Q}^u_j f )\circ \Psi)(t)=\sum_{k\in \mathcal{K}_n} I^u_{k} (b_{k,j}(t)) \ \mbox{in} \ L^2(\mathcal{C}^n,\mathcal{B}^n,\mu^u).
\end{equation}
In fact \eqref{02-5},

\begin{equation}
\label{04-2}
b_{k,j}(t)=Q^u_k (((\mathcal{Q}^u_j f )\circ \Psi)(t)).
\end{equation}

From the Lemma \ref{measurability1} it follows that $b_{k,j}$ can be chosen as measurable functions $(t_1,\ldots,t_{|k|},t)\to b_{k,j}(t)(t_1,\ldots,t_{|k|}).$ Put
$$
a_{k,j}(t_1,\ldots,t_{|k|},t)=b_{k,j}(t)(t_1,\ldots,t_{|k|}).
$$
Then
\begin{equation}
\label{28}
\mathbb{E}^{\mu^u}f^2 =\sum^n_{j=1} \sum_{k\in \mathcal{K}_n} \int^\infty_{0} \alpha(t,u) \int_{\mathcal{S}^{|k|}_+}  b_{k,j}(t)(s)^2dsdt=
\end{equation}
$$
=\sum_{k\in \mathcal{K}_n,|k|\geq 1} \int_{\mathcal{S}^{|k|}_+} \alpha(t_{|k|},u) a_{k}(t)^2dt.
$$
In particular, $a_k\in L^2(\mathcal{S}^{|k|}_+,\alpha(t_{|k|},u)dt)$ and integrals $\mathcal{J}^{u,\aleph,G}_ka_k$ are well-defined. Also \eqref{28} shows that the series $\sum_{k\in\mathcal{K}_n} \mathcal{J}^{u,\aleph,G}_{k,j} a_{k,j}$ converges in $L^2(\mathcal{C}^n,\mathcal{B}^n_{\tau},\mu^u).$ It remains to show that
\begin{equation}
\label{29}
\sum_{k\in\mathcal{K}_n} \mathcal{J}^{u,\aleph,G}_{k,j} a_{k,j}= \int^{\tau}_0 \mathcal{Q}^u_j f(t)dw_j(t).
\end{equation}
Denote $h_{k,j}(\omega,t)=I^u_k(b_{k,j}(t))(\omega).$ From \eqref{210} it follows that  $\mathcal{Q}^u_j f \circ \Psi=\sum_{k\in \mathcal{K}_n} h_{k,j}$ in $L^2(\mathbb{R}_+\times\mathcal{C}^n,\mathcal{P},\alpha(t,u)dt\times \mu^u(d\omega)).$ Finally, for each
$g\in L^2(\mathbb{R}_+\times \mathcal{C}^n,\mathcal{P},1_{\tau(\omega)>t}dt\times \mu^u(d\omega))$ one has
$$
\mathbb{E}^{\mu^u} \int^{\tau}_0 \mathcal{Q}^u_j f(t)dw_j(t) \int^{\tau}_0 g(t)dw_j(t) =\int^\infty_0 \mathbb{E}^{\mu^u}1_{\tau>t}\mathcal{Q}^u_j f(t)g(t)dt=
$$
$$
=\int^\infty_0 \mathbb{E}^{\mu^u}\alpha(t,u) (\mathcal{Q}^u_j f\circ \Psi)(t)(g\circ\Psi)(t)dt=
$$
$$
=\sum_{k\in\mathcal{K}_n} \int^\infty_0 \mathbb{E}^{\mu^u}\alpha(u,t) h_{k,j}(t)(g\circ\Psi)(t)dt=
$$
$$
=\sum_{k\in\mathcal{K}_n} \int^\infty_0 \mathbb{E}^{\mu^u} 1_{\tau>t} (h_{k,j}\circ\Phi)(t)g(t)dt=\sum_{k\in\mathcal{K}_n} \mathbb{E}^{\mu^u} \mathcal{J}^{u,\aleph,G}_{k,j}a_{k,j}\int^{\tau}_0 g(t)dw_j(t)
$$
and \eqref{29} is verified.

\end{proof}
Consider the case $\aleph=0.$ Then the lifetime $\tau$ of the solution to \eqref{30_1}  is the moment when the Brownian motion $w$ has left $G,$ and the process $\xi$ in \eqref{30_1} coincides with $w$ up to the moment $\tau.$ To separate this particular case, we will denote $\tau$ by $\tau_G$ and abbreviate $\mathcal{J}^{u,0,G}_k$ to $\mathcal{J}^{u,G}_k.$  Next we generalize Theorem \ref{stopped} to some measures on $(\mathcal{C}^n,\mathcal{B}^n_{\tau_G})$ that are absolutely continuous with respect to $\mu^u.$

Assume that $\varkappa\ll \mu^u$ on $\mathcal{B}^n_{\tau_G}$ with
$\rho=\frac{d\varkappa}{d\mu^u}.$ Let the Clark representation \cite[Ch.5, Th.(3.5)]{Revuz_Yor} of $\rho$ be
$$
\rho=1+\int^{\tau_G}_0 h(s)dw(s), \ \mu^u-\mbox{a.s.},
$$
for some progressively measurable $\mathbb{R}^n-$valued process $h.$ Denote
$$
\rho(t)=1+\int^{t\wedge\tau_G}_0h(s)dw(s).
$$
According to the Girsanov theorem \cite[Ch.8, Th.(1.4)]{Revuz_Yor} the process
\begin{equation}
\label{31_1}
\mathcal{G}(t,\omega)=\omega(t\wedge \tau_G(\omega))-\int^{t\wedge \tau_G(\omega)}_0 \frac{h(s,\omega)}{\rho(s,\omega)}ds
\end{equation}
on the probability space $(\mathcal{C}^n,\mathcal{B}^n_{\tau_G},\varkappa)$ is a continuous square integrable martingale with $\mathcal{G}(0)=u$ and
\begin{equation}
\label{31_2}
<\mathcal{G}_i,\mathcal{G}_j>(t)=\delta_{i,j}t\wedge \tau_G.
\end{equation}

\begin{lem}
\label{stopped_absolutely_continuous}
Assume that in \eqref{31_1} $\frac{h(s,\omega)}{\rho(s,\omega)}=\aleph(\omega(s))$ for some infinitely differentiable vector field $\aleph:G\to \mathbb{R}^n.$ Denote $\sigma$ the lifetime of the solution to \eqref{30_1} corresponding to $\aleph$. Then $\mathcal{G}$ is the measurable isomorphism of the space $(\mathcal{C}^n,\mathcal{B}^n_{\tau_G},\varkappa)$ onto the space $(\mathcal{C}^n,\mathcal{B}^n_{\sigma},\mu^u).$ In particular, the correspondence $f\to f\circ \mathcal{G}$ is an isometry of the space $L^2(\mathcal{C}^n,\mathcal{B}^n_{\sigma},\mu^u)$ onto the space $L^2(\mathcal{C}^n,\mathcal{B}^n_{\tau_G},\varkappa),$ and operators
$$
a_k\to (\mathcal{J}^{u,\aleph,G}_ka_k)\circ \mathcal{G}, \ k\in\mathcal{K}_n, a_k\in L^2(\mathcal{S}^{|k|}_+,\alpha_{\aleph,G}(t_{|k|},u)dt)
$$
possess following properties

\noindent
1) spaces $\mathcal{J}^{u,\aleph,G}_k(L^2(\mathcal{S}^{|k|}_+,\alpha_{\aleph,G}(t_{|k|},u)dt))\circ \mathcal{G}$ corresponding to different $k\in\mathcal{K}_n$ are pairwise orthogonal;

\noindent
2) $a_k\to (\mathcal{J}^{u,\aleph,G}_ka_k)\circ\mathcal{G}$ is an isometry of the space $L^2(\mathcal{S}^{|k|}_+,\alpha_{\aleph,G}(t_{|k|},u)dt)$ into the space $L^2(\mathcal{C}^n,\mathcal{B}^n_{\tau_G},\varkappa);$

\noindent
3) $L^2(\mathcal{C}^n,\mathcal{B}^n_{\tau_G},\varkappa)=\oplus_{k\in \mathcal{K}_n}( \mathcal{J}^{u,\aleph,G}_k (L^2(\mathcal{S}^{|k|}_+,\alpha_{\aleph,G}(t_{|k|},u)dt))\circ \mathcal{G}).$

\end{lem}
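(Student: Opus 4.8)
The plan is to realise $\mathcal{G}$ as an isomorphism of measure spaces whose inverse is obtained by solving the integral equation \eqref{30_1} driven by the new path, and then to transport Theorem \ref{stopped} through the unitary operator $f\mapsto f\circ\mathcal{G}$ that this isomorphism induces; the argument runs parallel to that of Lemma \ref{flow}, the single measure $\varkappa$ taking the place of the family $\pi^{t,u}$. First I would observe that, since $\frac{h(s,\omega)}{\rho(s,\omega)}=\aleph(\omega(s))$, the density process is the Dol\'eans exponential $\rho(t)=\mathcal{E}\big(\int_0^{\cdot\wedge\tau_G}\aleph(w(s))\,dw(s)\big)_t$, so $\varkappa$ is a probability measure and, by the Girsanov theorem and L\'evy's characterisation, the process $\tilde w(t):=w(t)-\int_0^{t\wedge\tau_G}\aleph(w(s))\,ds$ is an $n$-dimensional Brownian motion started at $u$ under $\varkappa$, with $\mathcal{G}(\cdot,\omega)=\tilde w(\cdot\wedge\tau_G(\omega),\omega)$. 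On $[0,\tau_G(\omega))$ the relation $\omega(t)=\tilde w(t)+\int_0^t\aleph(\omega(s))\,ds$ says that $\omega$ solves \eqref{30_1} driven by $\tilde w$, so by uniqueness (Lemma \ref{add4}, part 2) $\omega=\xi(\cdot,\tilde w)$ there; since $\omega$ reaches $\partial G$ at $\tau_G(\omega)$ while a solution of \eqref{30_1} never leaves $G$, the lifetime functional $\sigma$ of Lemma \ref{add4} satisfies $\sigma(\tilde w(\cdot,\omega))=\tau_G(\omega)$.

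Next I would introduce the candidate inverse $\Theta(v)=\xi(\cdot\wedge\sigma(v),v)$; by the measurability assertions of Lemma \ref{add4} this is, off the $\mu^u$-null set of drivers for which \eqref{30_1} explodes, a progressively measurable, hence $\mathcal{B}^n_\sigma$-measurable, map $\mathcal{C}^n\to\mathcal{C}^n$, and unwinding the computation above gives $\mathcal{G}(\cdot,\Theta(v))=v(\cdot\wedge\sigma(v))$ and $\Theta(\mathcal{G}(\cdot,\omega))=\omega(\cdot\wedge\tau_G(\omega))$. Invoking Galmarino's test — a functional is $\mathcal{B}^n_\sigma$- (resp.\ $\mathcal{B}^n_{\tau_G}$-) measurable exactly when it is unchanged by stopping the path at $\sigma$ (resp.\ $\tau_G$) — I would conclude that $\mathcal{G}$ and $\Theta$ are mutually inverse as maps between $(\mathcal{C}^n,\mathcal{B}^n_{\tau_G})$ and $(\mathcal{C}^n,\mathcal{B}^n_\sigma)$. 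For the pushforward, given $A\in\mathcal{B}^n_\sigma$ Galmarino's test yields $1_A=1_A\circ S$ with $S(v):=v(\cdot\wedge\sigma(v))$, while $\mathcal{G}(\cdot,\omega)=S(\tilde w(\cdot,\omega))$ by the identity $\sigma\circ\tilde w=\tau_G$; since $\tilde w\sim\mu^u$ under $\varkappa$,
$$\varkappa(\mathcal{G}\in A)=\mathbb{E}^{\varkappa}\,1_A(S(\tilde w))=\mathbb{E}^{\varkappa}\,1_A(\tilde w)=\mu^u(A).$$
Hence $\mathcal{G}_*\varkappa=\mu^u$ on $\mathcal{B}^n_\sigma$, so $\mathcal{G}$ is the asserted measurable isomorphism, and $f\mapsto f\circ\mathcal{G}$ is an isometry of $L^2(\mathcal{C}^n,\mathcal{B}^n_\sigma,\mu^u)$ into $L^2(\mathcal{C}^n,\mathcal{B}^n_{\tau_G},\varkappa)$ which is onto because any $g$ in the target equals $(g\circ\Theta)\circ\mathcal{G}$ with $g\circ\Theta\in L^2(\mathcal{C}^n,\mathcal{B}^n_\sigma,\mu^u)$.

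Finally, properties 1)--3) would follow by transporting Theorem \ref{stopped} through this unitary: in the present setting the lifetime ``$\tau$'' of Theorem \ref{stopped} is precisely $\sigma$, so $\mathcal{J}^{u,\aleph,G}_k a_k\in L^2(\mathcal{C}^n,\mathcal{B}^n_\sigma,\mu^u)$ and $(\mathcal{J}^{u,\aleph,G}_k a_k)\circ\mathcal{G}\in L^2(\mathcal{C}^n,\mathcal{B}^n_{\tau_G},\varkappa)$; a unitary preserves orthogonality and norms, giving 1) and 2) from parts 1) and 2) of Theorem \ref{stopped}, while surjectivity of $f\mapsto f\circ\mathcal{G}$ carries the Hilbert sum $\oplus_{k\in\mathcal{K}_n}\mathcal{J}^{u,\aleph,G}_k(L^2(\mathcal{S}^{|k|}_+,\alpha_{\aleph,G}(t_{|k|},u)dt))=L^2(\mathcal{C}^n,\mathcal{B}^n_\sigma,\mu^u)$ onto $\oplus_{k\in\mathcal{K}_n}\big(\mathcal{J}^{u,\aleph,G}_k(L^2(\mathcal{S}^{|k|}_+,\alpha_{\aleph,G}(t_{|k|},u)dt))\circ\mathcal{G}\big)$, which must then exhaust $L^2(\mathcal{C}^n,\mathcal{B}^n_{\tau_G},\varkappa)$, yielding 3). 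I expect the main obstacle to be the measure-theoretic bookkeeping behind the isomorphism: precisely matching $\mathcal{B}^n_{\tau_G}$ with $\mathcal{B}^n_\sigma$ under $\mathcal{G}$ and $\Theta$, controlling the behaviour of the maps past their stopping times, and checking through Galmarino's test — together with the fact that under $\varkappa$ the solution of \eqref{30_1} driven by $\tilde w$ leaves $G$ through $\partial G$ rather than exploding — that stopping at $\sigma$, resp.\ $\tau_G$, leaves $\mathcal{B}^n_\sigma$-, resp.\ $\mathcal{B}^n_{\tau_G}$-, measurable functionals intact; the analytic inputs (Girsanov, L\'evy) and the concluding transfer are routine.
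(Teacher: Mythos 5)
Your argument is correct and follows essentially the same route as the paper's proof: Girsanov/L\'evy identifies $\mathcal{G}$ as a stopped Brownian motion, the relation $\omega=\xi(\cdot,\mathcal{G}(\cdot,\omega))$ on $[0,\tau_G)$ together with $\tau_G=\sigma\circ\mathcal{G}$ exhibits $\xi$ as the inverse and yields $\varkappa\circ\mathcal{G}^{-1}=\mu^u$ on $\mathcal{B}^n_{\sigma}$, after which Theorem \ref{stopped} is transported through the resulting unitary. The only cosmetic caveat is that $\varkappa$ is defined on $\mathcal{B}^n_{\tau_G}$, so one should phrase the Girsanov step in terms of the stopped martingale $\mathcal{G}$ rather than the full process $\tilde w$ (or first extend $\varkappa$ to $\mathcal{B}^n$); your Galmarino-type bookkeeping, which the paper leaves implicit, takes care of exactly this point.
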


\begin{proof}
Denote $\xi$ the solution to \eqref{30_1} defined up to $\sigma.$ The assumption of the Lemma imply that
$$
\omega(t)=\mathcal{G}(t,\omega)+\int^t_0 \aleph(\omega(s))ds, \ t<\tau_G(\omega).
$$
Hence, $\omega(t)=\xi(t,\mathcal{G}(\cdot,\omega)),$ $t<\tau_G(\omega)$ and $\tau_G(\omega)=\sigma(\mathcal{G}(\cdot,\omega)).$ From \eqref{31_2} it follows that the distribution of $\mathcal{G}$ under the measure $\varkappa$ coincides with the distribution of the Brownian motion stopped at the moment $\sigma.$ That is,
$$
\varkappa\circ \mathcal{G}^{-1}=\mu^u \ \mbox{on} \ \mathcal{B}^n_{\sigma}.
$$
$\mathcal{G}$ is an isomorphism, as it is $\varkappa-$a.s. invertible. Actually, its inverse is  $\xi.$

\end{proof}

\section{$n-$point Motion of the Arratia Flow}

The main result of this section is an analogue of the It\^o-Wiener expansion for the space $L^2(\mathcal{C}^n,\mathcal{B}^n,\nu^u),$ where $u\in \mathcal{S}^n,$ $n\geq 1,$ $\nu^u$ is the distribution of the $n-$point motion $x_u$ of the Arratia flow. Denote $\tau_{\mathcal{S}^{n}}$ the first moment when two of the components of $x_u$ have met each other (and coalesced) and $p^{n-1}\in\mathcal{S}^{n-1}$ is the vector of positions of the remaining particles ($p^{n-1}_1<\ldots <p^{n-1}_{n-1}$  and $\{x(u_1,\tau_{\mathcal{S}^{n}}),\ldots,x(u_{n},\tau_{\mathcal{S}^{n}})\}=\{p^{n-1}_1,\ldots ,p^{n-1}_{n-1}\}$). The space $(\mathcal{C}^{n},\mathcal{B}^{n},\nu^u)$ is identified with the space with mixture of measures $(\mathcal{C}^{n-1}\times\mathcal{C}^{n}, \mathcal{B}^{n-1}\times \mathcal{B}^{n}_{\tau_{\mathcal{S}^{n}}},\nu^{p^{n-1}(\omega^{n})}(d\omega^{n-1})\mu^u(d\omega^{n}))$ (see the Introduction). Accordingly, the function $f\in L^2(\mathcal{C}^{n},\mathcal{B}^{n},\nu^u)$ will be considered as a function of $(\omega^{n-1},\omega^n),$ where $\omega^{n-1}$ refers to the $n-1$ trajectory left after the first coalescence and $\omega^n$ refers to the trajectories before the first coalescence. Using this representation, the It\^o-Wiener expansion will be constructed inductively. 

When $n=1,$ $\nu^u=\mu^u$ and the It\^o-Wiener expansion of $L^2(\mathcal{C}^1,\mathcal{B}^1,\nu^u)$ is determined by the operators $(I^u_k)_{k\in\mathcal{K}_1}$ of multiple stochastic integration with respect to the Brownian motion  \eqref{13}. So we set 
\begin{equation}
\label{25_02_5}
\mathcal{A}^u_k=I^u_k, \ k\in \mathcal{K}_1.
\end{equation} 
Then
$$
\mathcal{A}^u_k: L^2(\mathcal{S}^{|k|}_+,dt)\to L^2(\mathcal{C}^1,\mathcal{B}^1,\nu^u)
$$
is the isometry, $\mathcal{A}^u_k a\perp \mathcal{A}^u_l b$ for $k\ne l,$ and
$$
L^2(\mathcal{C}^1,\mathcal{B}^1,\nu^u)=\oplus_{k\in \mathcal{K}_1} \mathcal{A}^u_k( L^2(\mathcal{S}^{|k|}_+,dt)).
$$

To illuminate the key moments of the inductive construction let us consider some partial cases. We won't justify statements concerning measurability as they will be proved in the general case. When $n=2,$ $u\in \mathcal{S}^2,$ the only trajectory remained after the coalescence is the Brownian motion that is independent on the trajectories before the coalescence. Hence, it is natural to construct the It\^o-Wiener expansion combining the usual It\^o-Wiener expansion for the Brownian motion (via operators $I^v_k$ in \eqref{02-3}) with the one for the two-dimensional Brownian motion stopped when its trajectories meet each other (via operators $\mathcal{J}^{u,\mathcal{S}^2}_k$ in \eqref{281}). Define operators $\mathcal{A}^u_{k^1,k^2}$ as follows:
\begin{equation}
\label{30_03_2}
\mathcal{A}^u_{k^1,k^2} a(\omega^1,\omega^2)= I^{p^1(\omega^2)}_{k^1}(\mathcal{J}^{u,\mathcal{S}^2}_{k^2}a(t^1,\cdot)(\omega^2))(\omega^1).
\end{equation}
Then the squared norm of $\mathcal{A}^u_{k^1,k^2} a$  equals
\begin{equation}
\label{30_03_1}
\begin{gathered}
\int_{\mathcal{C}^2}\int_{\mathcal{C}^1} \mathcal{A}^u_{k^1,k^2} a(\omega^1,\omega^2)^2\mu^{p^1(\omega^2)}(d\omega^1)\mu^u(d\omega^2)= \\
=\int_{\mathcal{C}^2}\int_{\mathcal{C}^1}I^{p^1(\omega^2)}_{k^1}(\mathcal{J}^{u,\mathcal{S}^2}_{k^2}a(t^1,\cdot)(\omega^2))(\omega^1)^2\mu^{p^1(\omega^2)}(d\omega^1)\mu^u(d\omega^2)= \\
=\int_{\mathcal{C}^2}\int_{\mathcal{S}^{|k^1|}_+}\mathcal{J}^{u,\mathcal{S}^2}_{k^2}a(t^1,\cdot)(\omega^2)^2 dt^1 \mu^u(d\omega^2)=
\\
=\int_{\mathcal{S}^{|k^1|}_+} \int_{\mathcal{C}^2} \mathcal{J}^{u,\mathcal{S}^2}_{k^2}a(t^1,\cdot)(\omega^2)^2 \mu^u(d\omega^2)dt^1=\\
=\int_{\mathcal{S}^{|k^1|}_+} \int_{\mathcal{S}^{|k^2|}_+} a(t^1,t^2)^2 \alpha_{\mathcal{S}^2}(t^2_{|k^2|},u) dt^2dt^1.
\end{gathered}
\end{equation}
It follows that $\mathcal{A}^u_{k^1,k^2}$ is an isometry of $L^2(\mathcal{S}^{|k^1|,|k^2|}_+,\alpha_{\mathcal{S}^2}(t^2_{|k^2|},u) dt^2dt^1)$ into the space $L^2(\mathcal{C}^2,\mathcal{B}^2,\nu^u).$ Also, from \eqref{30_03_1} and properties of operators $I^v_k$ and $\mathcal{J}^{u,\mathcal{S}^2}_k$ it follows that ranges  $\mathcal{A}^u_{k^1,k^2}(L^2(\mathcal{S}^{|k^1|,|k^2|}_+,\alpha_{\mathcal{S}^2}(t^2_{|k^2|},u) dt^2dt^1))$ corresponding to different indices $(k^1,k^2)$ are orthogonal. Hence, operators $\mathcal{A}^u_{k^1,k^2}$ may be viewed as analogues of operators of multiple stochastic integration. They indeed produce the It\^o-Wiener expansion for the two-point motion of the Arratia flow in the sense that 
$$
L^2(\mathcal{C}^2,\mathcal{B}^2,\nu^u)=\oplus_{(k^1,k^2)\in \mathcal{K}_{1,2}}
\mathcal{A}^u_{k^1,k^2}(L^2(\mathcal{S}^{|k^1|,|k^2|}_+,\alpha_{\mathcal{S}^2}(t^2_{|k^2|},u) dt^2dt^1))
$$
(see Theorem \ref{finite-dimensional} for the proof). 

Consider $u\in \mathcal{S}^3.$ To construct the It\^o-Wiener expansion for the 3-point motion $x_u$ we will use the same isomorphism as above
$$
(\mathcal{C}^{3},\mathcal{B}^{3},\nu^u)\simeq (\mathcal{C}^{2}\times\mathcal{C}^{3}, \mathcal{B}^{2}\times \mathcal{B}^{3}_{\tau_{\mathcal{S}^{3}}},\nu^{p^{2}(\omega^{3})}(d\omega^{2})\mu^u(d\omega^{3})).
$$
Using operators $\mathcal{A}^u_{k^1,k^2}$ \eqref{30_03_2}, each functional $f\in L^2(\mathcal{C}^3,\mathcal{B}^3,\nu^u)$ can be written in the form 
\begin{equation}
\label{31_03_1}
f(\cdot,\omega^3)=\sum_{(k^1,k^2)\in\mathcal{K}_{1,2}}\mathcal{A}^{p^2(\omega^3)}_{k^1,k^2}(a_{k^1,k^2}(\cdot,\omega^3)).
\end{equation}
According to \eqref{30_03_1} the squared norm of the summand $\mathcal{A}^{p^2}_{k^1,k^2}a_{k^1,k^2}$ equals
\begin{equation}
\label{31_03_3}
\int_{\mathcal{C}^3}\int_{\mathcal{S}^{|k^1|,|k^2|}_+}a_{k^1,k^2}(t^1,t^2,\omega^3)^2\alpha_{\mathcal{S}^{2}}(t^2_{|k^2|},p^2(\omega^3))dt^1dt^2\mu^u(d\omega^3),
\end{equation}
what means that $a_{k^1,k^2}(t^1,t^2,\cdot)\in L^2(\mathcal{C}^3,\mathcal{B}^3_{\tau_{\mathcal{S}^3}},\varkappa^{t^2_{|k^2|};u}),$ where for $t_2=t^2_{|k^2|}$ the measure $\varkappa^{t_2;u}$ is defined via the density
$$
\frac{d\varkappa^{t_2;u}}{d\mu^u}(\omega^3)=\frac{\alpha_{\mathcal{S}^3}(t_2,p^{2}(\omega^3))}{\beta_{t_2}(u)}, \ \beta_{t_2}(u)=\mathbb{E}^{\mu^u}\alpha_{\mathcal{S}^3}(t_2,p^{2}).
$$
Hence, to expand further $a_{k^1,k^2}(t^1,t^2,\cdot)$ as a series of integrals, Lemma \ref{stopped_absolutely_continuous} have to be used. Put $\rho_{t_2}(u)=\alpha_{\mathcal{S}^3}(t_2,u)$. It will be proved in the Lemma \ref{add1} that the Clark representation formula for $\rho_{t_2}(p^2)$ is
$$
\rho_{t_2}(p^{2})=\beta_{t_2}(u)+\int^{\tau_{\mathcal{S}^{3}}}_0 \nabla \beta_{t_2}(w(r))dw(r), \ \mu^u-\mbox{a.s.}
$$
Also, $\mathbb{E}^{\mu^u}[\rho_{t_2}(p^2)/\mathcal{B}^3_r]=\beta_{t_2}(w(r\wedge\tau_{\mathcal{S}^{3}})).$ Consequently, the vector field $\aleph$ in the Lemma \ref{stopped_absolutely_continuous} equals 
$$
\aleph_{t_2}(u)=\nabla \log \beta_{t_2}(u).
$$
From the Lemma \ref{stopped_absolutely_continuous} it follows that operators 
$$
a_k\to (\mathcal{J}^{u,\aleph_{t_2,G}}_ka_k)\circ \mathcal{G}^{t_2}, \ k\in\mathcal{K}_3, 
$$
constitute the It\^o-Wiener expansion of the space $L^2(\mathcal{C}^3,\mathcal{B}^3_{\tau_{\mathcal{S}^3}},\varkappa^{t_2;u}),$ where 
$$
\mathcal{G}^{t_2}(\omega^3)=\omega^3(\cdot\wedge \tau_{\mathcal{S}^{3}}(\omega^3))-\int^{\cdot\wedge \tau_{\mathcal{S}^{3}}(\omega^3)}_0 \aleph_{t_2}(\omega^3(r))dr.
$$
Accordingly, functions $a_{k^1,k^2}(t^1,t^2,\cdot)$ can be expanded further: 
\begin{equation}
\label{31_03_2}
a_{k^1,k^2}(t^1,t^2,\cdot)=\sum_{k^3 \in \mathcal{K}_3} (\mathcal{J}^{u,\aleph_{t^2_{|k^2|}},G}_{k^3}a_{k^1,k^2,k^3}(t^1,t^2,\cdot))\circ \mathcal{G}^{t^2_{|k^2|}}.
\end{equation}
Equations \eqref{31_03_1},\eqref{31_03_2} suggest that for the case $n=3,$ $u\in \mathcal{S}^3$ operators $\mathcal{A}^u_{k^1,k^2,k^3}$ have to be defined as
$$
\mathcal{A}^u_{k^1,k^2,k^3} a (\omega^2,\omega^3)= \mathcal{A}^{p^2(\omega^3)}_{k^1,k^2} \bigg(\mathcal{J}^{u,\aleph_{t^2_{|k^2|}},G}_{k^3}a(t^1,t^2,\cdot)(\mathcal{G}^{t^2_{|k^2|}}(\omega^3))\bigg)(\omega^2).
$$
In our main result (Theorem \ref{finite-dimensional}) we prove that these operators indeed lead to the It\^o-Wiener expansion for the 3-point motion $x_u.$  Let us calculate the squared norm of  $\mathcal{A}^u_{k^1,k^2,k^3} a.$
$$
\int_{\mathcal{C}^3}\int_{\mathcal{C}^2} \mathcal{A}^u_{k^1,k^2,k^3} a(\omega^2,\omega^3)^2\nu^{p^2(\omega^3)}(d\omega^2)\mu^u(d\omega^3)=
$$
$$
=\int_{\mathcal{C}^3}\int_{\mathcal{C}^2}\mathcal{A}^{p^2(\omega^3)}_{k^1,k^2} \bigg(\mathcal{J}^{u,\aleph_{t^2_{|k^2|}},G}_{k^3}a(t^1,t^2,\cdot)(\mathcal{G}^{t^2_{|k^2|}}(\omega^3))\bigg)(\omega^2)^2 d\nu^{p^2(\omega^3)}d\mu^u=
$$
$$
=\int_{\mathcal{C}^3}\int_{\mathcal{S}^{|k^1|,|k^2|}_+} 
\mathcal{J}^{u,\aleph_{t^2_{|k^2|}},G}_{k^3}a(t^1,t^2,\cdot)(\mathcal{G}^{t^2_{|k^2|}}(\omega^3))^2 \alpha_{\mathcal{S}^{2}}(t^2_{|k^2|},p^2(\omega^3))dt^1 dt^2 d\mu^u=
$$
$$
=\int_{\mathcal{S}^{|k^1|,|k^2|}_+} \beta_{t^2_{|k^2|}}(u) 
\int_{\mathcal{C}^3} \mathcal{J}^{u,\aleph_{t^2_{|k^2|}},G}_{k^3}a(t^1,t^2,\cdot)(\mathcal{G}^{t^2_{|k^2|}}(\omega^3))^2  d\varkappa^{t^2_{|k^2|};u} dt^1 dt^2 =
$$
$$
=\int_{\mathcal{S}^{|k^1|,|k^2|,|k^3|}_+} a(t^1,t^2,t^3)^2   \beta_{t^2_{|k^2|}}(u)\alpha_{\aleph_{t^2_{|k^2|}},\mathcal{S}^3}(t^3_{|k^3|},u)   dt^1 dt^2 dt^3.
$$
In the last equality Lemma \ref{stopped_absolutely_continuous} was used.  Comparing this formula to \eqref{31_03_3} note that the density $\alpha_{\mathcal{S}^2}(t^2_{|k^2|},u)$  have changed to $ \beta_{t^2_{|k^2|}}(u)\alpha_{\aleph_{t^2_{|k ^2|}},\mathcal{S}^3}(t^3_{|k^3|},u).$ Relying on this observation we introduce functions  $\rho_{t_2,\ldots,t_n}(u),$ that will appear as densities in the general case. Simultaneously, functions 
$\beta_{t_2,\ldots,t_{n-1}}(u),$ $\aleph_{t_2,\ldots,t_{n-1}}(u)$ are defined.
\begin{equation}
\label{rho_eq}
\begin{gathered}
\rho_{t_2}(u)=\alpha_{\mathcal{S}^2}(u,t_2), \ u\in \mathcal{S}^2;\\
\beta_{t_2,\ldots,t_{n-1}}(u)=\mathbb{E}^{\mu^u} \rho_{t_2,\ldots,t_{n-1}}(p^{n-1}), \ \aleph_{t_2,\ldots,t_{n-1}}(u)=\nabla \log \beta_{t_2,\ldots,t_{n-1}}(u),\\
\rho_{t_2,\ldots,t_n}(u)=\alpha_{\aleph_{t_2,\ldots,t_{n-1}},\mathcal{S}^n}(t_n,u) \beta_{t_2,\ldots,t_{n-1}}(u), \ u\in \mathcal{S}^n,
\end{gathered}
\end{equation}
where $ t_2,\ldots ,t_{n-1}>0$ and $\alpha_{\aleph,G}$ is defined in \eqref{25_02_3}. Given $u\in \mathcal{S}^n$ and positive $t_2,\ldots ,t_{n-1},$ consider the measure $\varkappa^{t_2,\ldots,t_{n-1};u}$ on $(\mathcal{C}^{n},\mathcal{B}^{n}_{\tau_{\mathcal{S}^{n}}})$ defined via the density
$$
\frac{d\varkappa^{t_2,\ldots,t_{n-1};u}}{d\mu^u}(\omega^n)=\frac{\rho_{t_2,\ldots,t_{n-1}}(p^{n-1}(\omega^n))}
{\beta_{t_2,\ldots,t_{n-1}}(u)}.
$$
As it is seen from the case $n=3,$ the orthogonal structure of the space $L^2(\mathcal{C}^n,\mathcal{B}^{n}_{\tau_{\mathcal{S}^{n}}}, \varkappa^{t_2,\ldots,t_{n-1};u})$ is needed.

\begin{lem}
\label{add1}
1) The Clark representation formula for $\rho_{t_2,\ldots,t_{n-1}}(p^{n-1})$ is
$$
\rho_{t_2,\ldots,t_{n-1}}(p^{n-1})=\beta_{t_2,\ldots,t_{n-1}}(u)+\int^{\tau_{\mathcal{S}^{n}}}_0 \nabla \beta_{t_2,\ldots,t_{n-1}}(w(r))dw(r), \ \mu^u-\mbox{a.s.}
$$

2) $\mathbb{E}^{\mu^u}[\rho_{t_2,\ldots,t_{n-1}}(p^{n-1})/\mathcal{B}^n_r]=\beta_{t_2,\ldots,t_{n-1}}(w(r\wedge\tau_{\mathcal{S}^{n}})), \ \mu^u-\mbox{a.s.}$

3) The mapping
$$
\mathcal{G}^{t_2,\ldots,t_{n-1}}(\omega^n)=\omega^n(\cdot\wedge \tau_{\mathcal{S}^{n}}(\omega^n))-\int^{\cdot\wedge \tau_{\mathcal{S}^{n}}(\omega^n)}_0 \aleph_{t_2,\ldots,t_{n-1}}(\omega^n(r))dr
$$
is the measurable isomorphism of the space $(\mathcal{C}^n,\mathcal{B}^n_{\tau_\mathcal{S}^n},\varkappa^{t_2,\ldots,t_{n-1};u})$ onto the space $(\mathcal{C}^n,\mathcal{B}^n_{\tau_{\aleph_{t_2,\ldots,t_{n-1}},\mathcal{S}^n}},\mu^u).$ 
In particular, operators
\begin{equation}
\label{31_03_11}
a\to (\mathcal{J}^{u,\aleph_{t_2,\ldots,t_{n-1}},\mathcal{S}^n}_k a)\circ \mathcal{G}^{t_2,\ldots,t_{n-1}}, \ k\in\mathcal{K}_n, a\in L^2(\mathcal{S}^{|k|}_+,\alpha_{\aleph_{t_2,\ldots,t_{n-1}},\mathcal{S}^n}(t_{|k|},u)dt)
\end{equation}
possess following properties

\noindent
3.1) each operator $a\to(\mathcal{J}^{u,\aleph_{t_2,\ldots,t_{n-1}},\mathcal{S}^n}_ka)\circ\mathcal{G}^{t_2,\ldots,t_{n-1}}$ is an isometry of the space
$L^2(\mathcal{S}^{|k|}_+,\alpha_{\aleph_{t_2,\ldots,t_{n-1}},\mathcal{S}^n}(t_{|k|},u)dt)$ into the space $L^2(\mathcal{C}^n,\mathcal{B}^n_{\tau_{\mathcal{S}^n}},\varkappa^{t_2,\ldots,t_{n-1};u});$

\noindent
3.2) spaces $\mathcal{J}^{u,\aleph_{t_2,\ldots,t_{n-1}},\mathcal{S}^n}_k(L^2(\mathcal{S}^{|k|}_+,\alpha_{\aleph_{t_2,\ldots,t_{n-1}},\mathcal{S}^n}(t_{|k|},u)dt))\circ \mathcal{G}^{t_2,\ldots,t_{n-1}}$ cor\-res\-pon\-ding to different $k\in\mathcal{K}_n$ are pairwise orthogonal;

\noindent
3.3) $L^2(\mathcal{C}^n,\mathcal{B}^n_{\tau_{\mathcal{S}^n}},\varkappa^{t_2,\ldots,t_{n-1};u})=$
$$
=\oplus_{k\in \mathcal{K}_n}( \mathcal{J}^{u,\aleph_{t_2,\ldots,t_{n-1}},\mathcal{S}^n}_k (L^2(\mathcal{S}^{|k|}_+,\alpha_{\aleph_{t_2,\ldots,t_{n-1}},\mathcal{S}^n}(t_{|k|},u)dt))\circ \mathcal{G}^{t_2,\ldots,t_{n-1}}).
$$

\end{lem}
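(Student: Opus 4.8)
The plan is to identify $\rho_{t_2,\ldots,t_{n-1}}(p^{n-1})$ as a bounded $\mathcal{B}^n_{\tau_{\mathcal{S}^n}}$-measurable functional of $w$ whose Clark martingale under $\mu^u$ is $\beta_{t_2,\ldots,t_{n-1}}(w(\cdot\wedge\tau_{\mathcal{S}^n}))$, to read off 1) and 2) from this together with It\^o's formula and the harmonicity of $\beta_{t_2,\ldots,t_{n-1}}$, and then to obtain 3) by checking that 1)--2) place us exactly in the setting of Lemma \ref{stopped_absolutely_continuous} with $G=\mathcal{S}^n$ and $\aleph=\aleph_{t_2,\ldots,t_{n-1}}$. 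The whole argument is naturally an induction on $n$, carrying along the analytic properties of $\rho$ and $\beta$.

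The preliminary step is to check, by induction on $n$ from the definitions \eqref{rho_eq} and from Lemma \ref{probability}, that $\rho_{t_2,\ldots,t_{n-1}}$ is a $C^\infty$, strictly positive function on $\mathcal{S}^{n-1}$ bounded by $1$, whose continuous extension to the part of $\partial\mathcal{S}^{n-1}$ where exactly one pair of coordinates coincides equals $\rho_{t_2,\ldots,t_{n-2}}$ composed with the ascending-rearrangement (deduplication) map; the base case $\rho_{t_2}=\alpha_{\mathcal{S}^2}(\cdot,t_2)$ is immediate. Since $\tau_{\mathcal{S}^n}<\infty$ $\mu^u$-a.s. (the gap $w_2-w_1$ is, up to a factor $\sqrt2$, a one-dimensional Brownian motion) and $p^{n-1}$ is the ascending rearrangement of the $n-1$ distinct values of $w(\tau_{\mathcal{S}^n})$, the identity $\beta_{t_2,\ldots,t_{n-1}}(u)=\mathbb{E}^{\mu^u}[\rho_{t_2,\ldots,t_{n-1}}(p^{n-1})]$ exhibits $\beta_{t_2,\ldots,t_{n-1}}$ as the harmonic extension into $\mathcal{S}^n$ of a bounded boundary datum which is continuous on the part of $\partial\mathcal{S}^n$ where a single pair of coordinates coincides, and that part carries the full harmonic measure from every $u\in\mathcal{S}^n$. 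As each such boundary point is regular for the Dirichlet problem (exterior half-space condition), $\beta_{t_2,\ldots,t_{n-1}}$ is $C^\infty$ and strictly positive in $\mathcal{S}^n$, bounded by $1$, satisfies $\Delta\beta_{t_2,\ldots,t_{n-1}}=0$ there, and extends continuously to $w(\tau_{\mathcal{S}^n})$ with value $\rho_{t_2,\ldots,t_{n-1}}(p^{n-1})$ $\mu^u$-a.s. In particular $\aleph_{t_2,\ldots,t_{n-1}}=\nabla\log\beta_{t_2,\ldots,t_{n-1}}$ is a $C^\infty$ vector field on $\mathcal{S}^n$, so Lemma \ref{stopped_absolutely_continuous} becomes applicable.

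For 2), set $M_r=\mathbb{E}^{\mu^u}[\rho_{t_2,\ldots,t_{n-1}}(p^{n-1})\mid\mathcal{B}^n_r]$. On $\{r\geq\tau_{\mathcal{S}^n}\}$ one has $M_r=\rho_{t_2,\ldots,t_{n-1}}(p^{n-1})=\beta_{t_2,\ldots,t_{n-1}}(w(\tau_{\mathcal{S}^n}))$, and on $\{r<\tau_{\mathcal{S}^n}\}$ the Markov property of $w$ at time $r$ gives $M_r=\mathbb{E}^{\mu^{w(r)}}[\rho_{t_2,\ldots,t_{n-1}}(p^{n-1})]=\beta_{t_2,\ldots,t_{n-1}}(w(r))$; together this yields $M_r=\beta_{t_2,\ldots,t_{n-1}}(w(r\wedge\tau_{\mathcal{S}^n}))$, which is 2). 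For 1), apply It\^o's formula to $\beta_{t_2,\ldots,t_{n-1}}(w(r\wedge\tau_\delta))$, where $\tau_\delta$ is the exit time of $w$ from $\{v\in\mathcal{S}^n:\operatorname{dist}(v,\partial\mathcal{S}^n)>\delta\}$; the bounded-variation part vanishes by $\Delta\beta_{t_2,\ldots,t_{n-1}}=0$, giving $\beta_{t_2,\ldots,t_{n-1}}(w(r\wedge\tau_\delta))=\beta_{t_2,\ldots,t_{n-1}}(u)+\int_0^{r\wedge\tau_\delta}\nabla\beta_{t_2,\ldots,t_{n-1}}(w(s))\,dw(s)$. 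Letting $\delta\to0$ so that $\tau_\delta\nearrow\tau_{\mathcal{S}^n}$ and then $r\to\infty$, the left-hand side converges in $L^2(\mu^u)$ to $\rho_{t_2,\ldots,t_{n-1}}(p^{n-1})$ because $M$ is a bounded closed martingale; the $L^2$-convergence of the stochastic integrals then follows automatically and simultaneously shows $\mathbb{E}^{\mu^u}\int_0^{\tau_{\mathcal{S}^n}}|\nabla\beta_{t_2,\ldots,t_{n-1}}(w(s))|^2\,ds\leq 1<\infty$, so $\int_0^{\tau_{\mathcal{S}^n}}\nabla\beta_{t_2,\ldots,t_{n-1}}(w(s))\,dw(s)$ is well-defined and equals $\rho_{t_2,\ldots,t_{n-1}}(p^{n-1})-\beta_{t_2,\ldots,t_{n-1}}(u)$, which is 1).

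For 3), apply Lemma \ref{stopped_absolutely_continuous} with $G=\mathcal{S}^n$ and $\varkappa=\varkappa^{t_2,\ldots,t_{n-1};u}$, whose density with respect to $\mu^u$ on $\mathcal{B}^n_{\tau_{\mathcal{S}^n}}$ is $\beta_{t_2,\ldots,t_{n-1}}(u)^{-1}\rho_{t_2,\ldots,t_{n-1}}(p^{n-1})$. By 1) the Clark integrand of this density is $h(s,\omega)=\beta_{t_2,\ldots,t_{n-1}}(u)^{-1}\nabla\beta_{t_2,\ldots,t_{n-1}}(\omega(s))$, and by 2) the associated martingale is $\rho(s,\omega)=\beta_{t_2,\ldots,t_{n-1}}(u)^{-1}\beta_{t_2,\ldots,t_{n-1}}(\omega(s\wedge\tau_{\mathcal{S}^n}))$, so $h(s,\omega)/\rho(s,\omega)=\nabla\log\beta_{t_2,\ldots,t_{n-1}}(\omega(s))=\aleph_{t_2,\ldots,t_{n-1}}(\omega(s))$ --- which is precisely the hypothesis of Lemma \ref{stopped_absolutely_continuous}, with the map $\mathcal{G}$ of \eqref{31_1} coinciding with $\mathcal{G}^{t_2,\ldots,t_{n-1}}$ and its lifetime $\sigma$ with $\tau_{\aleph_{t_2,\ldots,t_{n-1}},\mathcal{S}^n}$. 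The isomorphism statement and the properties 3.1)--3.3) are then the conclusions of Lemma \ref{stopped_absolutely_continuous} read off verbatim. The main obstacle I anticipate is exactly the preliminary inductive control of the boundary behaviour of $\rho_{t_2,\ldots,t_{n-1}}$ near $\partial\mathcal{S}^{n-1}$ (continuity, strict positivity, boundedness) that legitimises both the harmonic-extension description of $\beta_{t_2,\ldots,t_{n-1}}$ and the $\mu^u$-a.s. identity $\beta_{t_2,\ldots,t_{n-1}}(w(\tau_{\mathcal{S}^n}))=\rho_{t_2,\ldots,t_{n-1}}(p^{n-1})$; once that is in place, the remaining steps are a localised It\^o computation and a direct appeal to Lemma \ref{stopped_absolutely_continuous}.
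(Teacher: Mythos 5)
Your proof is correct and follows essentially the same route as the paper: harmonicity of $\beta_{t_2,\ldots,t_{n-1}}$ plus It\^o's formula on the exit times $\tau_{G^\delta}$ with $\delta\to 0$, $r\to\infty$ for part 1), the Markov property for part 2), and a direct appeal to Lemma \ref{stopped_absolutely_continuous} with $\aleph=\nabla\log\beta_{t_2,\ldots,t_{n-1}}$ for part 3). Your preliminary inductive control of the positivity, boundedness and smoothness of $\rho$ and $\beta$ is a useful elaboration of what the paper disposes of by citing Stroock's harmonicity theorem, but it does not change the argument.
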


\begin{proof}
During the proof we omit indices $t_2,\ldots,t_{n-1}$ and put $G=\mathcal{S}^n.$ The function $\beta(u)=\mathbb{E}^{\mu^u} \rho(p^{n-1})$ is harmonic in $G$ \cite[Th.11.1.17]{Stroock_Probability}. Denote $G^{\delta}$ the set of points of $G$ whose distance to $\partial G$ exceeds $\delta.$ From the It\^o's formula applied to the function $\beta(w(t\wedge \tau_{G^\delta}))$ it follows that $\mu^u-$a.s.
$$
\beta(w(t\wedge \tau_{G^\delta}))=\beta(u)+
\int^{t\wedge\tau_{G^\delta}}_0 \nabla \beta(w(r))dw(r).
$$
Letting $t\to \infty$ and $\delta\to 0,$ part 1) follows. 

Part 2) follows from the Markov property of the Wiener process:
$$
\mathbb{E}^{\mu^u}[\rho(p^{n-1})/\mathcal{B}^n_r]=1_{\tau_G <r} \rho(p^{n-1})+
1_{\tau_G>r} \mathbb{E}^{\mu^v}(\rho(p^{n-1}))|_{v=w(r)}=\beta(w(r\wedge\tau_G)).
$$

Finally, part 3) is an immediate application of the Lemma \ref{stopped_absolutely_continuous} and formulas from first two parts. 

\end{proof}

\begin{cor}
\label{add2}
There exist operators
\begin{equation}
\label{3112}
\mathcal{R}^{t_2,\ldots,t_{n-1};u}_k : L^2(\mathcal{C}^n,\mathcal{B}^n_{\tau_{\mathcal{S}^n}},\varkappa^{t_2,\ldots,t_{n-1};u})\to L^2(\mathcal{S}^{|k|}_+,\alpha_{\aleph_{t_2,\ldots,t_{n-1}},\mathcal{S}^n}(t_{|k|},u)dt),
\end{equation}
such that each $g\in L^2(\mathcal{C}^n,\mathcal{B}^n_{\tau_{\mathcal{S}^n}},\varkappa^{t_2,\ldots,t_{n-1};u})$ has a series representation
$$
g=\sum_{k\in \mathcal{K}_n} (\mathcal{J}^{u,\aleph_{t_2,\ldots,t_{n-1}},\mathcal{S}^n}_k(\mathcal{R}^{t_2,\ldots,t_{n-1};u}_k g)) \circ\mathcal{G}^{t_2,\ldots,t_{n-1}}.
$$
In fact, $\mathcal{R}^{t_2,\ldots,t_{n-1};u}_k$ is a composition of the orthogonal projection of the space $L^2(\mathcal{C}^n,\mathcal{B}^n_{\tau_{\mathcal{S}^n}}, \varkappa^{t_2,\ldots,t_{n-1};u})$ 
onto $\mathcal{J}^{u,\aleph_{t_2,\ldots,t_{n-1}},\mathcal{S}^n}_k(L^2(\mathcal{S}^{|k|}_+,\alpha_{\aleph_{t_2,\ldots,t_{n-1}},\mathcal{S}^n}(t_{|k|},u)dt))\circ \mathcal{G}^{t_2,\ldots,t_{n-1}}$ 
with the inverse  $(\mathcal{J}^{u,\aleph_{t_2,\ldots,t_{n-1}},\mathcal{S}^n}_k)^{-1}.$
\end{cor}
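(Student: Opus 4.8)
The plan is to read the statement off directly from the orthogonal decomposition furnished by Lemma \ref{add1}, part 3.3), together with the isometry and orthogonality properties in 3.1) and 3.2). Throughout I would suppress the indices $t_2,\ldots,t_{n-1}$, write $H=L^2(\mathcal{C}^n,\mathcal{B}^n_{\tau_{\mathcal{S}^n}},\varkappa^{t_2,\ldots,t_{n-1};u})$, and for each $k\in\mathcal{K}_n$ set
$$
E_k=\mathcal{J}^{u,\aleph_{t_2,\ldots,t_{n-1}},\mathcal{S}^n}_k\big(L^2(\mathcal{S}^{|k|}_+,\alpha_{\aleph_{t_2,\ldots,t_{n-1}},\mathcal{S}^n}(t_{|k|},u)dt)\big)\circ\mathcal{G}^{t_2,\ldots,t_{n-1}}\subset H,
$$
and $T_k\colon a\mapsto(\mathcal{J}^{u,\aleph_{t_2,\ldots,t_{n-1}},\mathcal{S}^n}_k a)\circ\mathcal{G}^{t_2,\ldots,t_{n-1}}$. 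By 3.1) the map $T_k$ is an isometry of $L^2(\mathcal{S}^{|k|}_+,\alpha_{\aleph_{t_2,\ldots,t_{n-1}},\mathcal{S}^n}(t_{|k|},u)dt)$ onto $E_k$; being an isometry it is injective, so $T_k^{-1}\colon E_k\to L^2(\mathcal{S}^{|k|}_+,\alpha_{\aleph_{t_2,\ldots,t_{n-1}},\mathcal{S}^n}(t_{|k|},u)dt)$ is a well-defined isometry of $E_k$ onto its domain.

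Next I would invoke 3.3) to obtain $H=\oplus_{k\in\mathcal{K}_n}E_k$ as a Hilbert sum of pairwise orthogonal closed subspaces (orthogonality being 3.2)). Consequently every $g\in H$ has a unique decomposition $g=\sum_{k\in\mathcal{K}_n}g_k$ with $g_k=\mathrm{pr}_{E_k}g$ the orthogonal projection of $g$ onto $E_k$, the series converging in $H$ with $\|g\|_H^2=\sum_k\|g_k\|_H^2$. I would then simply define
$$
\mathcal{R}^{t_2,\ldots,t_{n-1};u}_k:=T_k^{-1}\circ\mathrm{pr}_{E_k}.
$$
This composition is linear and bounded (the projection has norm at most $1$ and $T_k^{-1}$ is an isometry on $E_k$), and by construction it takes values in $L^2(\mathcal{S}^{|k|}_+,\alpha_{\aleph_{t_2,\ldots,t_{n-1}},\mathcal{S}^n}(t_{|k|},u)dt)$, which is exactly the target space required in \eqref{3112}. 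The "in fact" clause of the statement is then just a restatement of this definition.

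Finally I would verify the series representation. Since $\mathrm{pr}_{E_k}g\in E_k$ lies in the range of $T_k$, one has $T_k\big(\mathcal{R}^{t_2,\ldots,t_{n-1};u}_kg\big)=T_kT_k^{-1}(\mathrm{pr}_{E_k}g)=g_k$, and summing over $k\in\mathcal{K}_n$ gives, in $H$,
$$
g=\sum_{k\in\mathcal{K}_n}g_k=\sum_{k\in\mathcal{K}_n}T_k\big(\mathcal{R}^{t_2,\ldots,t_{n-1};u}_kg\big)=\sum_{k\in\mathcal{K}_n}\big(\mathcal{J}^{u,\aleph_{t_2,\ldots,t_{n-1}},\mathcal{S}^n}_k(\mathcal{R}^{t_2,\ldots,t_{n-1};u}_kg)\big)\circ\mathcal{G}^{t_2,\ldots,t_{n-1}},
$$
as claimed.

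I do not expect a genuine obstacle here: the corollary is a purely formal consequence of Lemma \ref{add1}, and the only point deserving a word of care is that $T_k$ is invertible onto $E_k$ — which is precisely the isometry assertion of 3.1) (injectivity) combined with surjectivity onto $E_k$ built into the definition of $E_k$, the orthogonality 3.2) merely ensuring that the projections $g_k$ land in mutually disjoint summands. If any real difficulty is hidden anywhere, it is upstream in the proof of Lemma \ref{add1} (hence ultimately in Theorem \ref{stopped} and Lemma \ref{stopped_absolutely_continuous}), not in deducing this corollary.
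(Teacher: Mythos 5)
Your proposal is correct and coincides with the paper's own (implicit) argument: the paper gives no separate proof of Corollary \ref{add2} beyond the ``in fact'' clause, which is exactly your definition $\mathcal{R}^{t_2,\ldots,t_{n-1};u}_k=T_k^{-1}\circ\mathrm{pr}_{E_k}$ read off from the orthogonal decomposition in Lemma \ref{add1}, part 3). The only point worth making explicit, which you do, is that each $E_k$ is closed (being the isometric image of a complete space), so the orthogonal projections exist.
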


Now we are in a position to describe the inductive construction.

\noindent
{\bf Induction base.} {\it For every $u\in \mathbb{R}$ and $k\in \mathcal{K}_1$ set   
$$
\mathcal{A}^u_k=I^u_k,
$$
where $I^u_k$ are operators of multiple stochastic integration with respect to the Brownian motion $w$ on $(\mathcal{C}^1,\mathcal{B}^1)$ \eqref{13}.
}

\noindent
{\bf Induction hypothesis.} {\it For every $v\in \mathcal{S}^{n-1}$ and $(k^1,\ldots ,k^{n-1})\in \mathcal{K}_{1,\ldots,n-1}$ an operator
$$
\mathcal{A}^v_{k^1,\ldots,k^{n-1}} : L^2(\mathcal{S}^{|k^1|,\ldots,|k^{n-1}|}_+,\rho_{t^2_{|k^2|},\ldots,t^{n-1}_{|k^{n-1}|}}(v)dt^1\ldots dt^{n-1}) \to
$$
$$
\to L^2(\mathcal{C}^{n-1},\mathcal{B}^{n-1},\nu^v)
$$
is defined in such a way that

\noindent
{\bf (H1)} $\mathcal{A}^v_{k^1,\ldots,k^{n-1}}$ is the isometry;

\noindent
{\bf (H2)} spaces $\mathcal{A}^v_{k^1,\ldots,k^{n-1}}(L^2(\mathcal{S}^{|k^1|,\ldots,|k^{n-1}|}_+,\rho_{t^2_{|k^2|},\ldots,t^{n-1}_{|k^{n-1}|}}(v)dt^1\ldots dt^n)),$ that correspond to different indices $(k^1,\ldots,k^{n-1})\in \mathcal{K}_{1,\ldots,n-1},$ are pairwise orthogonal;

\noindent
{\bf (H3)} $L^2(\mathcal{C}^{n-1},\mathcal{B}^{n-1},\nu^v)=$
$$
=\oplus_{(k^1,\ldots,k^{n-1})\in \mathcal{K}_{1,\ldots,n-1}}\mathcal{A}^v_{k^1,\ldots,k^{n-1}}(L^2(\mathcal{S}^{|k^1|,\ldots,|k^{n-1}|}_+,\rho_{t^2_{|k^2|},\ldots,t^{n-1}_{|k^{n-1}|}}(v)dt^1\ldots dt^{n-1}));
$$
respectively, projections
$$
P^v_{k^1,\ldots,k^{n-1}}:L^2(\mathcal{C}^{n-1},\mathcal{B}^{n-1},\nu^v)\to L^2(\mathcal{S}^{|k^1|,\ldots,|k^{n-1}|}_+,\rho_{t^2_{|k^2|},\ldots,t^{n-1}_{|k^{n-1}|}}(v)dt^1\ldots dt^n)
$$ are defined in such a way that for each $g\in L^2(\mathcal{C}^{n-1},\mathcal{B}^{n-1},\nu^v)$ an equality holds
$$
g=\sum_{(k^1,\ldots,k^{n-1})\in \mathcal{K}_{1,\ldots,n-1}} \mathcal{A}^v_{k^1,\ldots,k^{n-1}}P^v_{k^1,\ldots,k^{n-1}}g;
$$

\noindent
{\bf (H4)} for any $(k^1,\ldots,k^{n-1})\in \mathcal{K}_{1,\ldots,n-1}$ and bounded Borel function with compact support $a:\mathcal{S}^{|k^1|,\ldots,|k^{n-1}|}_+\to \mathbb{R},$ a family $\{\mathcal{A}^v_{k^1,\ldots ,k^{n-1}} a\}_{v\in \mathcal{S}^{n-1}}$ can be realized as a measurable function on $\mathcal{C}^{n-1}\times \mathcal{S}^{n-1}$ w.r.t. the family $\{\nu^v\}_{v\in \mathcal{S}^{n-1}}$ (Definition \ref{measurable_family});

\noindent
{\bf (H5)} for any $(k^1,\ldots,k^{n-1})\in \mathcal{K}_{1,\ldots,n-1}$ and bounded Borel function $g: \break \mathcal{C}^{n-1}\to \mathbb{R},$ a family $\{P^v_{k^1,\ldots ,k^{n-1}} g\}_{v\in \mathcal{S}^{n-1}}$ can be realized as a me\-a\-su\-ra\-ble func\-tion on $\mathcal{S}^{|k^1|,\ldots,|k^{n-1}|}_+\times \mathcal{S}^{n-1}$ with respect to the family of measures
$$
\{\rho_{t^2_{|k^2|},\ldots,t^{n-1}_{|k^{n-1}|}}(v)dt^1\ldots dt^{n-1}\}_{v\in \mathcal{S}^{n-1}}
$$ 
(Definition \ref{measurable_family}). }

\noindent
{\bf Induction step.} {\it Consider $(k^1,\ldots,k^n)\in \mathcal{K}_{1,\ldots,n}.$ For every 
$$
a\in L^2(\mathcal{S}^{|k^1|,\ldots,|k^{n}|}_+, \break \rho_{t^2_{|k^2|},\ldots,t^{n}_{|k^{n}|}}(u)dt^2\ldots dt^{n})
$$ 
denote $\mathcal{T}a$ the result of application an operator \eqref{31_03_11} to the last $|k^{n}|$ coordinates, that is $\mathcal{T}a(t^1,\ldots ,t^{n-1},\omega^{n})=$
\begin{equation}
\label{I_def}
= \mathcal{J}^{u,\aleph_{t^2_{|k^2|},\ldots,t^{n-1}_{|k^{n-1}|}},\mathcal{S}^n}_k(a(t^1,\ldots,t^{n-1},\cdot))(\mathcal{G}_{t^2_{|k^2|},\ldots,t^{n-1}_{|k^{n-1}|}}(\omega^{n})).
\end{equation}
Next, define operators $\mathcal{A}^u_{k^1,\ldots,k^{n}}a$  for $u\in \mathcal{S}^n$ by the rule
\begin{equation}
\label{operators}
\mathcal{A}^u_{k^1,\ldots,k^{n}}a(\omega^{n-1},\omega^{n})=\mathcal{A}^{p^{n-1}(\omega^{n})}_{k^1,\ldots,k^{n-1}}(\mathcal{T}a(\cdot,\omega^{n}))(\omega^{n-1}).
\end{equation}
}

In the Theorem \ref{finite-dimensional} we will show that operators \eqref{operators} also satisfy conditions {\bf (H1)-(H5)}. It gives possibility to define operators $\mathcal{A}^u_{k^1,\ldots,k^n}$ with properties {\bf (H1)-(H5)} for all $n\geq 1,$ $u\in \mathcal{S}^n,$ $(k^1,\ldots,k^n)\in \mathcal{K}_{1,\ldots,n}.$ From the induction base and the induction step it is seen that $\mathcal{A}^u_{k_1,\ldots ,k_n}$ are operators of the multiple stochastic integration with respect to finite-point motion of the Arratia flow. Thus, properties {\bf (H1)-(H3)} state that these operators constitute an analogue of the It\^o-Wiener expansion.  Technical properties {\bf (H4)-(H5)} are called to justify measurability issues.  Indeed, due to the complicated expression in \eqref{operators}, its measurability in $(\omega^{n-1},\omega^n)$ is not obvious. To prove it we will need measurability properties of operators in \eqref{02-1},\eqref{02-2},\eqref{02-3},\eqref{02-5},   \eqref{I_def}, \eqref{31_03_11}, \eqref{3112}. 

The key instrument in deducing the existence of measurable realizations will be the Lemma \ref{measurability}. It states that under rather general assumptions on spaces $(\mathcal{X},\mathcal{B}_{\mathcal{X}},\mu^\omega),$ $(\mathcal{Y},\mathcal{B}_{\mathcal{Y}},\nu^\omega),$ and operators 
$$
\mathcal{A}^\omega:L^2(\mathcal{X},\mathcal{B}_{\mathcal{X}},\mu^\omega)\to L^2(\mathcal{Y},\mathcal{B}_{\mathcal{Y}},\nu^\omega),
$$
the existence of measurable realizations for ``test'' families of the kind $\{\mathcal{A}^\omega f_0\}$ automatically implies the existence of measurable realizations for all families $\{\mathcal{A}^\omega f(\cdot,\omega)\},$ such that for each $\omega$ $\mathcal{A}^\omega f(\cdot,\omega)$ is well-defined. Accordingly, properties {\bf (H4)-(H5)} are immediately strengthened. For example, given measurable $a:\mathcal{S}^{|k^1|,\ldots,|k^{n-1}|}_+\times \Omega\to \mathbb{R}, $ $\xi:\Omega\to \mathcal{S}^{n-1}$ such that
$$
\forall \omega\in \Omega \ \ a(\cdot, \omega)\in L^2(\mathcal{S}^{|k^1|,\ldots,|k^{n-1}|}_+,\rho_{t^2_{|k^2|},\ldots,t^{n-1}_{|k^{n-1}|}}(\xi(\omega))dt^1\ldots dt^{n-1}),
$$
a family $\{\mathcal{A}^{\xi(\omega)}_{k^1,\ldots ,k^{n-1}} (a(\cdot,\omega))\}_{\omega\in \Omega}$ can be realized as a measurable function on $\mathcal{C}^{n-1}\times \Omega$ w.r.t. the family $\{\nu^{\xi(\omega)}\}_{\omega\in \Omega}.$ For the proof note that

1) there exists a sequence of bounded Borel functions with compact support $(f_n)$ on $\mathcal{S}^{|k^1|,\ldots,|k^{n-1}|}_+,$ which is total in $L^2$ relatively to any Radon measure;

2) there exists a sequence  of bounded Borel functions $(g_n)$ on $\mathcal{C}^{n-1},$ which is total in $L^2$ relatively to any probability measure;

3) in the view of {\bf (H4)} each family $\{\mathcal{A}^{\xi(\omega)}_{k^1,\ldots ,k^{n-1}}f_n\}_{\omega\in \Omega}$ can be realized as a measurable function on $\mathcal{C}^{n-1}\times \Omega$  w.r.t. the family $\{\nu^{\xi(\omega)}\}_{\omega\in \Omega}.$

Consequently, the Lemma \ref{measurability} gives the needed result.

In the case $n=1,$ operators $\mathcal{A}^u_k$ coincide with the operators $I^u_k$ and evidently satisfy {\bf (H1)-(H3)}. In the next Lemma we state some measurability properties of operators \eqref{02-1},\eqref{02-2},\eqref{02-3},\eqref{02-5}. Additionally, properties {\bf (H4)-(H5)} for operators $\mathcal{A}^u_k$ are proved. 

\begin{lem}
\label{brownian_case} Let $k,k_1,\ldots,k_d\in\{1,\ldots,n\}.$

1) Given a measurable function $a:\mathcal{S}^{d}_+\times \mathbb{R}^n\to \mathbb{R},$ such that
$$
\forall u\in \mathbb{R}^n \ \ \ a(\cdot,u)\in L^2(\mathcal{S}^{d}_+),
$$
a family $\{I^u_{k_1,\ldots,k_d}(a(\cdot,u))\}_{u\in \mathbb{R}^n}$ can be realized as a measurable function on $\mathcal{C}^n\times \mathbb{R}^n$ w.r.t. the family $\{\mu^u\}_{u\in \mathbb{R}^n}.$

2) Given a measurable function $g:\mathcal{C}^n\times\mathbb{R}^n\to \mathbb{R},$ such that
$$
\forall u\in \mathbb{R}^n \ \ \ g(\cdot,u)\in L^2(\mathcal{C}^n,\mathcal{B}^n,\mu^u),
$$
a family $\{Q^u_{k_1,\ldots,k_d}(g(\cdot,u))\}_{u\in \mathbb{R}^n}$  can be realized as a measurable function on $\mathcal{S}^{d}_+\times \mathbb{R}^n$ w.r.t. the Lebesgue measure on $\mathcal{S}^d_+.$

3) Given a $\mathcal{P}\times \mathcal{B}(\mathbb{R}^n)-$measurable $a:\mathbb{R}_+\times \mathcal{C}^n \times \mathbb{R}^n\to \mathbb{R}$ such that
$$
\forall u\in \mathbb{R}^n \ \ \ a(\cdot,u)\in L^2(\mathbb{R}_+\times \mathcal{C}^n,\mathcal{P},dt\times \mu^u(d\omega^n)),
$$
a family $\{\mathcal{I}^u_k(a(\cdot,u))\}_{u\in \mathbb{R}^n}$ can be realized as a measurable function on the space $\mathcal{C}^n\times \mathbb{R}^n$ w.r.t. the family $\{\mu^u\}_{u\in \mathbb{R}^n}.$

4) Given a measurable $g:\mathcal{C}^n\times \mathbb{R}^n\to \mathbb{R}$ such that
$$
\forall u\in \mathbb{R}^n \ \ \ g(\cdot,u)\in L^2(\mathcal{C}^n,\mathcal{B}^n,\mu^u),
$$
a family $\{\mathcal{Q}^u_k(g(\cdot,u))\}_{u\in \mathbb{R}^n}$ can be realized as a
$\mathcal{P}\times \mathcal{B}(\mathbb{R}^n)-$measurable function on
$\mathbb{R}_+\times \mathcal{C}^n \times \mathbb{R}^n$ w.r.t. the family $\{dt\times \mu^u \}_{u\in \mathbb{R}^n}$ of measures on $\mathbb{R}_+\times \mathcal{C}^n.$
\end{lem}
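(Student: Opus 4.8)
The plan is to reduce all four statements to a single $u$-independent reference measure by exploiting the translation covariance of Brownian stochastic integration, and then to apply a general measurable-realization lemma. Write $\theta_u\colon\mathcal{C}^n\to\mathcal{C}^n$, $\theta_u\omega=u+\omega$, and $\Theta_u(t,\omega)=(t,\theta_u\omega)$; then $\theta_u$ is $\mathcal{B}^n_t/\mathcal{B}^n_t$-measurable for every $t$, $\Theta_u$ is $\mathcal{P}/\mathcal{P}$-measurable, $(\theta_u)_*\mu^0=\mu^u$, and $b\mapsto b\circ\Theta_u$ is an isometry of $L^2(\mathbb{R}_+\times\mathcal{C}^n,\mathcal{P},dt\times\mu^u)$ onto $L^2(\mathbb{R}_+\times\mathcal{C}^n,\mathcal{P},dt\times\mu^0)$. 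The first step is to establish, for every $u$, the intertwining identities
\begin{align*}
I^u_{k_1,\ldots,k_d}(a)\circ\theta_u&=I^0_{k_1,\ldots,k_d}(a), & \mathcal{I}^u_k(b)\circ\theta_u&=\mathcal{I}^0_k(b\circ\Theta_u),\\
Q^u_k g&=Q^0_k(g\circ\theta_u), & \mathcal{Q}^u_k g&=\bigl(\mathcal{Q}^0_k(g\circ\theta_u)\bigr)\circ\Theta_u^{-1},
\end{align*}
each holding almost everywhere with respect to the measure governing its left-hand side. The first column is checked on elementary integrands — where both sides reduce to the same finite sum of increments, which $\theta_u$ leaves unchanged — and extended by the It\^o isometry; the second column then follows from the uniqueness of the Clark and of the It\^o-Wiener decompositions, together with $\mathbb{E}^{\mu^u}g=\mathbb{E}^{\mu^0}(g\circ\theta_u)$ and the covariance of $I^u$ and $\mathcal{I}^u$ just obtained.

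Next I would check that, after this reduction, the inputs depend Borel-measurably on $u$ as $L^2$-valued maps. Every $L^2$-space occurring is separable ($\mathcal{C}^n$ and $\mathcal{S}^d_+$ are Polish, and $L^2(\mathbb{R}_+\times\mathcal{C}^n,\mathcal{P},dt\times\mu^0)$ is a closed subspace of the separable $L^2$ over $\mathcal{B}(\mathbb{R}_+)\times\mathcal{B}^n$), so by Pettis' theorem it suffices to verify weak measurability, which follows from Fubini's theorem applied to the jointly measurable maps $(t,\omega,u)\mapsto a(t,u+\omega,u)$ and $(\omega,u)\mapsto g(u+\omega,u)$ (and their deterministic-kernel versions); the slices are square integrable since, e.g., $\int a(t,u+\omega,u)^2\,dt\,\mu^0(d\omega)=\int a(t,\omega,u)^2\,dt\,\mu^u(d\omega)<\infty$. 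Composing these Borel maps with the bounded operators $I^0_{k_1,\ldots,k_d}$, $\mathcal{I}^0_k$, $Q^0_k$, $\mathcal{Q}^0_k$ produces Borel maps $u\mapsto\xi^u$ into $L^2$ of the fixed reference measure ($\mu^0$ on $\mathcal{C}^n$, Lebesgue measure on $\mathcal{S}^d_+$, or $dt\times\mu^0$ on $\mathbb{R}_+\times\mathcal{C}^n$, according to the case).

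The third step is the realization lemma: a Borel map $u\mapsto\xi^u$ from $\mathbb{R}^n$ into a separable $L^2(\mathcal{X},\mathcal{B},\mu)$ with $\mu$ $\sigma$-finite admits a jointly $\mathcal{B}\times\mathcal{B}(\mathbb{R}^n)$-measurable $h$ with $h(\cdot,u)=\xi^u$ $\mu$-a.e.\ for all $u$ (this is the content of Lemma \ref{measurability1}; directly, one approximates $u\mapsto\xi^u$ pointwise by countably-valued Borel maps $\xi^u_m$ with $\|\xi^u_m-\xi^u\|_{L^2}\le 2^{-m}$, replaces the countably many values by Borel representatives to obtain jointly measurable $\hat\xi_m$, and notes that on every finite piece of $\mu$ the sum $\sum_m\|\hat\xi^u_{m+1}-\hat\xi^u_m\|_{L^1(\mu)}$ converges, forcing $(\hat\xi^u_m(x))_m$ to be Cauchy for $\mu$-a.e.\ $x$, so that its pointwise limit serves as $h$). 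In parts 3) and 4) the representatives are chosen $\mathcal{P}$-measurable, and since $\mathcal{P}$-measurability survives all the operations used, the resulting $h$ is $\mathcal{P}\times\mathcal{B}(\mathbb{R}^n)$-measurable. Finally one transports $h$ back through the shift: $(\omega,u)\mapsto h(\omega-u,u)$, respectively $(t,\omega,u)\mapsto h(t,\omega-u,u)$, is jointly measurable (still $\mathcal{P}\times\mathcal{B}(\mathbb{R}^n)$-measurable in parts 3) and 4)), and by the identities of the first step it is the required realization of $\{I^u_{k_1,\ldots,k_d}(a(\cdot,u))\}_u$, $\{\mathcal{I}^u_k(a(\cdot,u))\}_u$, and $\{\mathcal{Q}^u_k(g(\cdot,u))\}_u$ with respect to $\{\mu^u\}$ and $\{dt\times\mu^u\}$; in part 2) no transport is needed, since $Q^u_k(g(\cdot,u))$ already equals $Q^0_k(g(u+\cdot,u))=\xi^u$.

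The step I expect to be the main obstacle is the realization lemma together with its progressive-measurability refinement: for each fixed $u$ one controls only an $L^2$-equivalence class, and $L^2$-convergence of the approximants does not by itself yield pointwise convergence, so the $L^1$-summability device above (or the appeal to Lemma \ref{measurability1}) is what manufactures a single genuinely jointly measurable — and, in parts 3) and 4), progressively measurable — function. The remaining point requiring care is the verification that the deterministic shift $\theta_u$ really intertwines $\mathcal{I}^u_k$ and $\mathcal{Q}^u_k$ with their $u=0$ counterparts — not merely $I^u_k$ and $Q^u_k$, where it is immediate — and that $\mathcal{P}$-measurability is preserved by the maps $(t,\omega,u)\mapsto(t,u\pm\omega,u)$ used to pass back and forth.
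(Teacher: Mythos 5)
Your proposal is correct and follows essentially the same route as the paper: reduce to the fixed reference measure $\mu^0$ via the translations $\theta_u\omega=u+\omega$ (using $\mu^u=\mu^0\circ\theta_u^{-1}$ and the intertwining of $I^u_k$, $\mathcal{I}^u_k$, $Q^u_k$, $\mathcal{Q}^u_k$ with their $u=0$ counterparts), and then invoke the measurable-realization lemmas of Section 4 before transporting back by $\omega\mapsto\omega-u$. The only cosmetic difference is that you establish Borel measurability of the $L^2$-valued maps directly (Pettis plus Fubini) and apply Lemma \ref{measurability1}, whereas the paper first treats $u$-independent test integrands explicitly and passes to general $u$-dependent ones via Lemma \ref{measurability}; both are sound.
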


\begin{proof}
Consider translations $\theta_u(\omega)=u+\omega,$ so that $\mu^u=\mu^0\circ \theta^{-1}_u.$ If $a:\mathcal{S}^{d}_+\to \mathbb{R}$ is a bounded Borel function  with compact support, then $I^u_{k_1,\ldots,k_d}a=(I^0_{k_1,\ldots,k_d}a) \circ \theta_{-u},$ $\mu^u-$a.s. Hence,  $(I^0_{k_1,\ldots,k_d}a)(\omega^n-u)$ is the needed measurable realization of the family $\{I^u_{k_1,\ldots,k_d}a\}_{u\in \mathbb{R}^n}.$
With the help of the Lemma \ref{measurability} the obtaines result is immediately generalized to any function $a:\mathcal{S}^{d}_+\times \mathbb{R}^n\to \mathbb{R},$ satisfying conditions of 1). Due to the Lemma \ref{measurability}, it is enough to prove 2) for a bounded Borel function $g:\mathcal{C}^n\to \mathbb{R}.$ Consider correspondence $u\to g(\cdot+u)$ as a measurable mapping of $\mathbb{R}^n$ into $L^2(\mathcal{C}^n,\mathcal{B}^n,\mu^0).$ Respectively, $u\to Q^0_{k_1,\ldots,k_d} (g(\cdot+u))$ is a measurable mapping of $\mathbb{R}^n$ into $L^2(\mathcal{S}^{d}_+).$ Using the Lemma \ref{measurability1}, it can be realized as a measurable function $h_{k_1,\ldots,k_d}:\mathcal{S}^d_+\times \mathbb{R}^n\to \mathbb{R}.$ Then
$$
g(\cdot+u)=\sum_{(k_1,\ldots,k_d)\in\mathcal{K}_n} I^0_{k_1,\ldots,k_d}  (h_{k_1,\ldots,k_d}(\cdot,u)).
$$
It follows that in $L^2(\mathcal{C}^n,\mathcal{B}^n,\mu^u)$
$$
g=\sum_{(k_1,\ldots,k_d)\in\mathcal{K}_n} (I^0_{k_1,\ldots,k_d} (h_{k_1,\ldots,k_d}(\cdot,u)))\circ \theta_{-u}=$$
$$
=\sum_{(k_1,\ldots,k_d)\in\mathcal{K}_n} I^u_{k_1,\ldots,k_d} (h_{k_1,\ldots,k_d}(\cdot,u)),
$$
i.e. $h_{k_1,\ldots,k_d}$ is a measurable realization of the family $\{Q^u_{k_1,\ldots,k_d} g\}_{u\in \mathbb{R}^n}.$

Proofs of properties 3) and 4) follow the same scheme.
\end{proof}

In the next Lemma measurability properties of operators \eqref{31_03_11}, \eqref{3112} are stated. Its proof reduces to the multiple applications of Lemmata \ref{brownian_case} and \ref{measurability}.

\begin{lem}
\label{measurability_stopped}
1) Given a measurable function $a:\mathcal{S}^{|k|}_+ \times \mathbb{R}^{n-2}_+\times\mathcal{S}^n\to \mathbb{R},$ such that
$$
\forall t_2,\ldots,t_{n-1},u \ \ a(\cdot,t_2,\ldots,t_{n-1},u)\in L^2(\mathcal{S}^{|k|}_+,\alpha_{\aleph_{t_2,\ldots,t_{n-1}},\mathcal{S}^n}(t_{|k|},u)dt),
$$
a family $\{(\mathcal{J}^{u,\aleph_{t_2,\ldots,t_{n-1}},\mathcal{S}^n}_k(a(\cdot,t_2,\ldots,t_{n-1},u)))\circ \mathcal{G}^{t_2,\ldots,t_{n-1}}\}_{t_2,\ldots,t_{n-1},u}$ can be realized as a measurable function on $\mathcal{C}^n \times \mathbb{R}^{n-2}_+\times\mathcal{S}^n$ w.r.t. the family $\{\varkappa^{t_2,\ldots,t_{n-1};u}\}_{t_2,\ldots,t_{n-1},u}.$

2) Given a measurable function $g:\mathcal{C}^n \times \mathbb{R}^{n-2}_+\times\mathcal{S}^n\to \mathbb{R},$ such that
$$
\forall t_2,\ldots,t_{n-1},u \ \ g(\cdot,t_2,\ldots,t_{n-1},u)\in L^2(\mathcal{C}^n,\mathcal{B}^n_{\tau_{\mathcal{S}^n}},\varkappa^{t_2,\ldots,t_{n-1};u}),
$$
a family $\{\mathcal{R}^{ t_2,\ldots,t_{n-1};u}_k(g(\cdot,t_2,\ldots,t_{n-1},u))\}_{t_2,\ldots,t_{n-1},u}$ can be realized as a measurable function on $\mathcal{S}^{|k|}_+ \times \mathbb{R}^{n-2}_+\times\mathcal{S}^n$ with respect to the family of measures $\{\alpha_{\aleph_{t_2,\ldots,t_{n-1}},\mathcal{S}^n}(t_{|k|},u)dt\}_{t_2,\ldots,t_{n-1},u}.$
\end{lem}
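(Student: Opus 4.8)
The plan is to peel off the definitions of $\mathcal{J}^{u,\aleph,\mathcal{S}^n}_k$, $\mathcal{G}^{t_2,\ldots,t_{n-1}}$ and $\mathcal{R}^{t_2,\ldots,t_{n-1};u}_k$ into finitely many elementary operations and then apply Lemmata \ref{brownian_case} and \ref{measurability} repeatedly, as announced before the statement. The one preparatory step I would carry out first is to record the \emph{joint} measurability, in the parameters $(t_2,\ldots,t_{n-1},u)$ and in the space variable, of all the auxiliary data: the maps $(t_2,\ldots,t_{n-1},u)\mapsto\beta_{t_2,\ldots,t_{n-1}}(u)$ and $\mapsto\aleph_{t_2,\ldots,t_{n-1}}(u)$, the map $(t,t_2,\ldots,t_{n-1},u)\mapsto\alpha_{\aleph_{t_2,\ldots,t_{n-1}},\mathcal{S}^n}(t,u)$, and hence the transformations $\Phi=\Phi^{u,\aleph_{t_2,\ldots,t_{n-1}}}$, its measurable inverse-extension $\Psi$ from Lemma \ref{flow}, the flow $\mathcal{G}^{t_2,\ldots,t_{n-1}}$ from Lemma \ref{add1} and its $\varkappa$-a.s.\ inverse $\xi$. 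I would get this by running the recursion \eqref{rho_eq}: at each level $\beta$ is an expectation $\mathbb{E}^{\mu^u}\rho(p^{n-1})$ of a kernel that is bounded (by $1$) and continuous in its arguments by the inductive hypothesis, so $\beta$ and $\aleph=\nabla\log\beta$ are jointly continuous, while $\alpha_{\aleph,\mathcal{S}^m}$ depends jointly continuously on time and on the drift through the representation \eqref{23} and the hypoellipticity argument of Lemma \ref{probability}; the defining integral formulas in \eqref{25_02_1} and in Lemma \ref{add1}.3) then inherit joint measurability by dominated convergence (and $\xi$ by continuous dependence of ODE solutions on the vector field), and all remain progressively measurable in $(t,\omega^n)$ for fixed parameters.

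For part 1) I would first reduce to ``test'' kernels. The weighted measures $\alpha_{\aleph_{t_2,\ldots,t_{n-1}},\mathcal{S}^n}(t_{|k|},u)\,dt$ on $\mathcal{S}^{|k|}_+$ are Radon (bounded continuous weight), so the bounded Borel kernels with compact support form a total sequence in each of them; by Lemma \ref{measurability} it is then enough to treat the family in which $a$ is replaced by one fixed such kernel $a_0$, with $d=|k|$. For such $a_0$ I would unfold \eqref{281} from the inside out. The inner $(d-1)$-fold integral $\{I^u_{k_1,\ldots,k_{d-1}}(\tilde a_0(t))\}_{t,u}$ has a $\mathcal{P}\times\mathcal{B}(\mathbb{R}_+)\times\mathcal{B}(\mathcal{S}^n)$-measurable realization by Lemma \ref{brownian_case}.1) and the existence of a progressively measurable modification; composing with the jointly measurable $\Phi$ and multiplying by $1_{t<\tau}$ gives a jointly measurable integrand whose required integrability is the computation \eqref{27}, so Lemma \ref{brownian_case}.3) yields a jointly measurable realization of $\{\mathcal{I}^u_{k_d}(1_{\cdot<\tau}(h\circ\Phi)(\cdot))\}$; composing with the jointly measurable map $\mathcal{G}^{t_2,\ldots,t_{n-1}}$ produces a measurable function on $\mathcal{C}^n\times\mathbb{R}^{n-2}_+\times\mathcal{S}^n$. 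It agrees $\mu^u$-a.s., hence $\varkappa^{t_2,\ldots,t_{n-1};u}$-a.s.\ (since $\varkappa\ll\mu^u$), with $(\mathcal{J}^{u,\aleph_{t_2,\ldots,t_{n-1}},\mathcal{S}^n}_k a_0)\circ\mathcal{G}^{t_2,\ldots,t_{n-1}}$, because the a.s.\ identities behind \eqref{281}, Lemma \ref{flow}.3) and Lemma \ref{add1}.3) hold $\mu^u$-a.s. Finally Lemma \ref{measurability} upgrades the test case to the general $a$ of the statement (the $|k|=0$ case being trivial).

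For part 2) I would use Corollary \ref{add2} together with the proof of Theorem \ref{stopped} and Lemma \ref{stopped_absolutely_continuous} to make $\mathcal{R}^{t_2,\ldots,t_{n-1};u}_k$ explicit. Writing for brevity $\xi=(\mathcal{G}^{t_2,\ldots,t_{n-1}})^{-1}$, $\Psi=\Psi^{u,\aleph_{t_2,\ldots,t_{n-1}}}$ and $k=(k',k_d)$ with $|k'|=|k|-1$, one has for $|k|\geq 1$
$$
\mathcal{R}^{t_2,\ldots,t_{n-1};u}_k g\,(t_1,\ldots,t_{|k|})=Q^u_{k'}\Big(\big(\mathcal{Q}^u_{k_d}(g\circ\xi)\circ\Psi\big)(t_{|k|})\Big)(t_1,\ldots,t_{|k|-1}),
$$
i.e.\ a finite composition of: precomposition with the inverse flow $\xi$, the Clark operator $\mathcal{Q}^u_{k_d}$, precomposition with $\Psi$, the It\^o--Wiener coefficient operator $Q^u_{k'}$, and a measurable reshuffling of the time variables; the $|k|=0$ component is $\int g\,d\varkappa^{t_2,\ldots,t_{n-1};u}$, measurable in the parameters by Fubini. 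Each building block has the measurable-realization property --- $\mathcal{Q}^u_{k_d}$ by Lemma \ref{brownian_case}.4), $Q^u_{k'}$ by Lemma \ref{brownian_case}.2), the precompositions by the joint measurability secured in the first paragraph --- and Lemma \ref{measurability} chains them together, absorbing in each step the dependence of the intermediate integrands on $(t_2,\ldots,t_{n-1},u)$. This delivers the claimed realization of $\{\mathcal{R}^{t_2,\ldots,t_{n-1};u}_k(g(\cdot,t_2,\ldots,t_{n-1},u))\}$ with respect to $\{\alpha_{\aleph_{t_2,\ldots,t_{n-1}},\mathcal{S}^n}(t_{|k|},u)\,dt\}$.

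The main obstacle I expect is the first paragraph: establishing the joint (Borel) regularity in the parameters $(t_2,\ldots,t_{n-1},u)$ of $\beta$, $\aleph$, $\alpha_{\aleph,\mathcal{S}^n}$ and therefore of the flows $\Phi,\Psi,\mathcal{G},\xi$, which forces one to track continuity of $\alpha$ with respect to the drift through the PDE/Green-function representations of Lemma \ref{probability}. Once that is in hand, everything else is a mechanical iteration of Lemmata \ref{brownian_case} and \ref{measurability}.
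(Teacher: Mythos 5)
Your proposal is correct and follows exactly the route the paper prescribes: the paper gives no written proof of this lemma, saying only that it ``reduces to the multiple applications of Lemmata \ref{brownian_case} and \ref{measurability},'' and your unfolding of $\mathcal{J}^{u,\aleph,\mathcal{S}^n}_k$, $\mathcal{G}^{t_2,\ldots,t_{n-1}}$ and $\mathcal{R}^{t_2,\ldots,t_{n-1};u}_k$ into those two lemmas (with your explicit formula for $\mathcal{R}_k$ agreeing with \eqref{04-2} transported by $\mathcal{G}$) is a faithful elaboration of that plan. The joint measurability in the parameters of $\beta$, $\aleph$, $\alpha$ and the associated flows, which you rightly identify as the only substantive work, is likewise left implicit in the paper.
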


\begin{rem} Together with the Lemma \ref{measurability}, this result imply that $\mathcal{T}a$ is a measurable function. From \eqref{rho_eq} it follows that $\mathcal{T}$ is the isometry of the space $L^2(\mathcal{S}^{|k^1|,\ldots,|k^{n}|}_+,
\rho_{t^2_{|k^2|},\ldots,t^{n}_{|k^{n}|}}(u)dt^1\ldots dt^{n})$ into the space $L^2(\mathcal{S}^{|k^1|,\ldots,|k^{n_1}|}_+\times \mathcal{C}^{n},\rho_{t^2_{|k^2|},\ldots,t^{n-1}_{|k^{n-1}|}}(p^{n-1}(\omega^{n}))  dt^1\ldots dt^{n-1}\mu^u(d\omega^{n})).$
\end{rem}

\begin{thm}
\label{finite-dimensional}
$$
\mathcal{A}^u_{k^1,\ldots,k^{n}}: L^2(\mathcal{S}^{|k^1|,\ldots,|k^{n}|}_+,\rho_{t^2_{|k^2|},\ldots,t^{n}_{|k^{n}|}}(u)dt^1\ldots dt^{n}) \to L^2(\mathcal{C}^{n},\mathcal{B}^{n},\nu^u)
$$
are well-defined operators and all the hypotheses {\bf (H1)-(H5)} hold for a family $(\mathcal{A}^u_{k^1,\ldots,k^{n}}).$
\end{thm}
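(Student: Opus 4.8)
The plan is to argue by induction on $n$. The base case $n=1$ is immediate: there $\mathcal{A}^u_k=I^u_k$ and $\nu^u=\mu^u$, so \textbf{(H1)--(H3)} are the classical It\^o--Wiener expansion \eqref{02_4}, while \textbf{(H4)--(H5)} are parts 1) and 2) of Lemma \ref{brownian_case} with $\mathbb{R}^n$ specialized to $\mathbb{R}=\mathcal{S}^1$. So I would assume \textbf{(H1)--(H5)} for $n-1$ and pass to $n$, working throughout with the identification of $(\mathcal{C}^n,\mathcal{B}^n,\nu^u)$ with the mixture space $(\mathcal{C}^{n-1}\times\mathcal{C}^n,\mathcal{B}^{n-1}\times\mathcal{B}^n_{\tau_{\mathcal{S}^n}},\nu^{p^{n-1}(\omega^n)}(d\omega^{n-1})\mu^u(d\omega^n))$ and with a generic $f\in L^2(\mathcal{C}^n,\mathcal{B}^n,\nu^u)$ written as a $\mathcal{B}^{n-1}\times\mathcal{B}^n_{\tau_{\mathcal{S}^n}}$-measurable function $f(\omega^{n-1},\omega^n)$.

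\emph{Well-definedness, \textbf{(H1)} and \textbf{(H2)}.} I would obtain all of these from a single computation of $\langle\mathcal{A}^u_{k^1,\ldots,k^n}a,\mathcal{A}^u_{l^1,\ldots,l^n}b\rangle$ for arbitrary $k,l\in\mathcal{K}_{1,\ldots,n}$. By \eqref{operators} and Fubini this equals $\int_{\mathcal{C}^n}\langle\mathcal{A}^{p^{n-1}(\omega^n)}_{k^1,\ldots,k^{n-1}}(\mathcal{T}a(\cdot,\omega^n)),\mathcal{A}^{p^{n-1}(\omega^n)}_{l^1,\ldots,l^{n-1}}(\mathcal{T}b(\cdot,\omega^n))\rangle_{L^2(\nu^{p^{n-1}(\omega^n)})}\mu^u(d\omega^n)$, where $\mathcal{T}a(\cdot,\omega^n)$ belongs to the domain of $\mathcal{A}^{p^{n-1}(\omega^n)}_{k^1,\ldots,k^{n-1}}$ for $\mu^u$-a.e.\ $\omega^n$ by the Remark after Lemma \ref{measurability_stopped}. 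If $(k^1,\ldots,k^{n-1})\ne(l^1,\ldots,l^{n-1})$ the integrand vanishes for every $\omega^n$ by \textbf{(H2)} for $n-1$. Otherwise \textbf{(H1)} for $n-1$ lets me drop the operators $\mathcal{A}^{p^{n-1}(\omega^n)}_{k^1,\ldots,k^{n-1}}$; then, writing $t_j=t^j_{|k^j|}$, a further application of Fubini (swapping $\omega^n$ with $(t^1,\ldots,t^{n-1})$), the identity $\rho_{t_2,\ldots,t_{n-1}}(p^{n-1}(\omega^n))\,\mu^u(d\omega^n)=\beta_{t_2,\ldots,t_{n-1}}(u)\,\varkappa^{t_2,\ldots,t_{n-1};u}(d\omega^n)$, and the definition \eqref{I_def} reduce the inner $\omega^n$-integral to $\beta_{t_2,\ldots,t_{n-1}}(u)\langle(\mathcal{J}^{u,\aleph_{t_2,\ldots,t_{n-1}},\mathcal{S}^n}_{k^n}a(t^1,\ldots,t^{n-1},\cdot))\circ\mathcal{G}^{t_2,\ldots,t_{n-1}},(\mathcal{J}^{u,\aleph_{t_2,\ldots,t_{n-1}},\mathcal{S}^n}_{l^n}b(t^1,\ldots,t^{n-1},\cdot))\circ\mathcal{G}^{t_2,\ldots,t_{n-1}}\rangle_{L^2(\varkappa^{t_2,\ldots,t_{n-1};u})}$. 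By Lemma \ref{add1}, parts 3.1)--3.2), this vanishes when $k^n\ne l^n$ and, when $k=l$, equals $\beta_{t_2,\ldots,t_{n-1}}(u)\int_{\mathcal{S}^{|k^n|}_+}a(t^1,\ldots,t^{n-1},t^n)b(t^1,\ldots,t^{n-1},t^n)\alpha_{\aleph_{t_2,\ldots,t_{n-1}},\mathcal{S}^n}(t^n_{|k^n|},u)\,dt^n$; integrating in $(t^1,\ldots,t^{n-1})$ and using $\rho_{t_2,\ldots,t_n}(u)=\alpha_{\aleph_{t_2,\ldots,t_{n-1}},\mathcal{S}^n}(t_n,u)\beta_{t_2,\ldots,t_{n-1}}(u)$ from \eqref{rho_eq} yields the isometry \textbf{(H1)}. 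The two vanishing cases together are \textbf{(H2)}.

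\emph{Completeness \textbf{(H3)}.} Given $f$ as above, I would first expand $f(\cdot,\omega^n)$ fibrewise via \textbf{(H3)} for $n-1$, producing kernels $b_{k^1,\ldots,k^{n-1}}(\cdot,\omega^n)=P^{p^{n-1}(\omega^n)}_{k^1,\ldots,k^{n-1}}f(\cdot,\omega^n)$; by the strengthened form of \textbf{(H5)} for $n-1$ these may be chosen jointly measurable in $(t^1,\ldots,t^{n-1},\omega^n)$, and they inherit $\mathcal{B}^n_{\tau_{\mathcal{S}^n}}$-measurability in $\omega^n$ from $f$. The norm identity from the previous step shows $b_{k^1,\ldots,k^{n-1}}(t^1,\ldots,t^{n-1},\cdot)\in L^2(\mathcal{C}^n,\mathcal{B}^n_{\tau_{\mathcal{S}^n}},\varkappa^{t^2_{|k^2|},\ldots,t^{n-1}_{|k^{n-1}|};u})$ for a.e.\ $(t^1,\ldots,t^{n-1})$, so Corollary \ref{add2} expands each as $\sum_{k^n}(\mathcal{J}^{u,\aleph_{\ldots},\mathcal{S}^n}_{k^n}a_{k^1,\ldots,k^n}(t^1,\ldots,t^{n-1},\cdot))\circ\mathcal{G}^{\ldots}$ with $a_{k^1,\ldots,k^n}(t^1,\ldots,t^{n-1},\cdot)=\mathcal{R}^{\ldots;u}_{k^n}(b_{k^1,\ldots,k^{n-1}}(t^1,\ldots,t^{n-1},\cdot))$, jointly measurable by Lemma \ref{measurability_stopped}, 2). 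Substituting back, using continuity (isometry) of $\mathcal{A}^{p^{n-1}(\omega^n)}_{k^1,\ldots,k^{n-1}}$ to interchange it with the $k^n$-summation, and recognizing \eqref{operators}, \eqref{I_def}, I get $f(\cdot,\omega^n)=\sum_{(k^1,\ldots,k^n)}\mathcal{A}^u_{k^1,\ldots,k^n}a_{k^1,\ldots,k^n}(\cdot,\omega^n)$; the two Parseval identities (for \textbf{(H3)} at level $n-1$ and for Corollary \ref{add2}) combined with \textbf{(H1)--(H2)} for $n$ show that the $a_{k^1,\ldots,k^n}$ lie in the prescribed weighted $L^2$ spaces, that the series converges in $L^2(\nu^u)$, and that $P^u_{k^1,\ldots,k^n}f=a_{k^1,\ldots,k^n}$, i.e.\ $P^u_{k^1,\ldots,k^n}$ is the announced composition.

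\emph{Measurability \textbf{(H4)--(H5)} and the main obstacle.} For \textbf{(H4)}, given a bounded compactly supported Borel $a$ on $\mathcal{S}^{|k^1|,\ldots,|k^n|}_+$, I would first use Lemma \ref{measurability_stopped}, 1) together with the universal Lemma \ref{measurability} to realize $\mathcal{T}a$ as a measurable function of $(t^1,\ldots,t^{n-1},\omega^n,v)$, then feed it into the strengthened \textbf{(H4)} for $n-1$ (parameter space $\mathcal{C}^n\times\mathcal{S}^n$, $\xi(\omega^n,v)=p^{n-1}(\omega^n)$) to realize $\{\mathcal{A}^{p^{n-1}(\omega^n)}_{k^1,\ldots,k^{n-1}}(\mathcal{T}^{(v)}a(\cdot,\omega^n))\}$ as a measurable function $H$ on $\mathcal{C}^{n-1}\times\mathcal{C}^n\times\mathcal{S}^n$; fixing $v$ and integrating the fibrewise $\nu^{p^{n-1}(\omega^n)}$-a.s.\ identity against $\mu^v(d\omega^n)$ gives $H(\cdot,\cdot,v)=\mathcal{A}^v_{k^1,\ldots,k^n}a$ $\nu^v$-a.s. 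Property \textbf{(H5)} is the analogous two-step composition: realize $b_{k^1,\ldots,k^{n-1}}$ by the strengthened \textbf{(H5)} for $n-1$, then $a_{k^1,\ldots,k^n}=\mathcal{R}^{\ldots;v}_{k^n}(b_{k^1,\ldots,k^{n-1}})$ by Lemma \ref{measurability_stopped}, 2), combined via Lemma \ref{measurability}. The delicate part of the whole argument is the bookkeeping in \textbf{(H3)}: tracking the nested ``for a.e.\ $\omega^n$'' / ``for a.e.\ $(t^1,\ldots,t^{n-1})$'' qualifiers, verifying that each $b_{k^1,\ldots,k^{n-1}}(t^1,\ldots,t^{n-1},\cdot)$ is $\mathcal{B}^n_{\tau_{\mathcal{S}^n}}$-measurable so that Corollary \ref{add2} is applicable, and justifying the interchange of $\mathcal{A}^{p^{n-1}(\omega^n)}_{k^1,\ldots,k^{n-1}}$ with the $k^n$-summation --- it is precisely the orthogonality and norm identities established in the first step that legitimize this interchange and the convergence of the resulting double series.
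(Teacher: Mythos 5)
Your proposal is correct and follows essentially the same route as the paper: the same induction with the mixture-space identification, the same norm/inner-product computation via \textbf{(H1)}--\textbf{(H2)} at level $n-1$ and Lemma \ref{add1} for well-definedness and orthogonality, the same fibrewise expansion plus Corollary \ref{add2} for \textbf{(H3)}, and the same reliance on Lemmata \ref{measurability_stopped} and \ref{measurability} for \textbf{(H4)}--\textbf{(H5)}. The only (harmless) cosmetic difference is that you compute the general cross inner product, whereas the paper writes out only the squared norm and treats orthogonality implicitly.
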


\begin{proof}
The property {\bf (H4)} of $\{\mathcal{A}^v_{k^1,\ldots,k^{n-1}}\}_{v\in\mathcal{S}^{n-1}}$ and Lemma \ref{measurability} imply that $\mathcal{A}^u_{k^1,\ldots,k^{n}}a$ is a measurable function of $(\omega^{n-1},\omega^n).$ Also, when $a$ is a bounded function of compact support, the property {\bf (H4)} for $\{\mathcal{A}^u_{k^1,\ldots,k^{n}}a\}_{u\in\mathcal{S}^{n}}$ immediately follows.

Properties {\bf (H1), (H2)} follow from the next calculation.
$$
\int_{\mathcal{C}^{n}}\int_{\mathcal{C}^{n-1}} \mathcal{A}^u_{k^1,\ldots,k^{n}}a(\omega^{n-1},\omega^{n})^2 \nu^{p^{n-1}(\omega^{n})}(d\omega^{n-1})\mu^u(d\omega^{n})=
$$
$$
\int_{\mathcal{C}^{n}}\bigg(\int_{\mathcal{C}^{n-1}} \mathcal{A}^{p^{n-1}(\omega^{n})}_{k^1,\ldots,k^{n-1}}(\mathcal{T}a(\cdot,\omega^{n}))(\omega^{n-1})
^2 \nu^{p^{n-1}(\omega^{n})}(d\omega^{n-1})\bigg)\mu^u(d\omega^{n})=
$$
$$
\int_{\mathcal{S}^{|k^1|,\ldots,|k^{n-1}|}_+\times\mathcal{C}^n} \mathcal{T}a(t^1,\ldots,t^{n-1},\omega^{n}))^2 \rho_{t^2_{|k^2|},\ldots,t^{n-1}_{|k^{n-1}|}}(p^{n-1})dt^1\ldots dt^{n-1}d\mu^u=
$$
$$
\int_{\mathcal{S}^{|k^1|,\ldots,|k^{n}|}_+} a(t^1,\ldots,t^{n})^2 \rho_{t^2_{|k^2|},\ldots,t^{n}_{|k^{n}|}}(u)dt^1\ldots dt^{n}.
$$
Next we prove that the Hilbert sum of all the spaces
$$
 \mathcal{A}^u_{k^1,\ldots,k^{n}}(L^2(\mathcal{S}^{|k^1|,\ldots,|k^{n}|}_+,\rho_{t^2_{|k^2|},\ldots,t^{n}_{|k^{n}|}}(u)dt^1\ldots dt^{n}))
$$
coincides with $ L^2(\mathcal{C}^{n},\mathcal{B}^{n},\nu^u).$
Consider $f\in L^2(\mathcal{C}^{n},\mathcal{B}^{n},\nu^u).$  As
$$
\mathbb{E}^{\nu^u}f^2=\int_{\mathcal{C}^{n}}\int_{\mathcal{C}^{n-1}} f(\omega^{n-1},\omega^{n})^2 \nu^{p^{n-1}(\omega^{n})}(d\omega^{n-1})\mu^u(d\omega^{n})<\infty,
$$
it follows that $f$ has a version such that $f(\cdot,\omega^{n})\in L^2(\mathcal{C}^{n-1},\mathcal{B}^{n-1},\nu^{p^{n-1}(\omega^{n})})$ for all $\omega^{n}.$
The inductive assumption imply that $f(\cdot,\omega^{n})$ has a series representation
\begin{equation}
\label{last2}
f(\cdot,\omega^{n})=\sum_{(k^1,\ldots,k^{n-1})\in\mathcal{K}_{1,\ldots ,n-1}} \mathcal{A}^{p^{n-1}(\omega^{n})}_{k^1,\ldots,k^{n-1}}(a_{k_1,\ldots k_{n-1}}(\cdot, \omega^{n})),
\end{equation}
where for each $\omega^n,$
$$
a_{k_1,\ldots, k_{n-1}}(\cdot,\omega^n)\in L^2(\mathcal{S}^{|k^1|,\ldots,|k^{n-1}|}_+,\rho_{t^2_{|k^2|},\ldots,t^{n-1}_{|k^{n-1}|}}(p^{n-1}(\omega^{n}))dt^1\ldots dt^{n-1}).
$$
Note that $a_{k_1,\ldots k_{n-1}}(\cdot,\omega^n)=P^{p^{n-1}(\omega^{n})}_{k_1,\ldots k_{n-1}}(f(\cdot,\omega^{n})).$ Property {\bf (H5)} of the induction hypothesis for  $\{\mathcal{A}^v_{k^1,\ldots,k^{n-1}}\}$ and Lemma \ref{measurability} imply that it is possible to choose functions $a_{k_1,\ldots k_{n-1}}$ measurable in all arguments. For fixed $(t^1,\ldots,t^{n-1}),$
$$
a_{k_1,\ldots, k_{n-1}}(t^1,\ldots,t^{n-1},\cdot)\in L^2(\mathcal{C}^n,\mathcal{B}^n_{\tau_{\mathcal{S}^n}},\varkappa^{u;t^2_{|k^2|},\ldots,t^{n-1}_{|k^{n-1}|}}),
$$
as the calculation below shows.
$$
\mathbb{E}^{\nu^u}f^2=\sum_{(k^1,\ldots,k^{n-1})\in\mathcal{K}_{1,\ldots ,n-1}}
\int_{\mathcal{C}^n}\int_{\mathcal{S}^{|k^1|,\ldots,|k^{n-1}|}_+}a_{k_1,\ldots, k_{n-1}}(t^1,\ldots,t^{n-1},\omega^{n})^2
$$
$$
\rho_{t^2_{|k^2|},\ldots,t^{n-1}_{|k^{n-1}|}}(p^{n-1}(\omega^{n}))
dt^1\ldots dt^{n-1}\mu^u(d\omega^{n})=
$$
$$
=\sum_{(k^1,\ldots,k^{n-1})\in\mathcal{K}_{1,\ldots ,n-1}}
\int_{\mathcal{S}^{|k^1|,\ldots,|k^{n-1}|}_+} \beta_{t^2_{|k^2|},\ldots,t^{n-1}_{|k^{n-1}|}}(u)
$$
$$
\int_{\mathcal{C}^n} a_{k_1,\ldots, k_{n-1}}(t^1,\ldots,t^{n-1},\omega^{n})^2 \varkappa^{t^2_{|k^2|},\ldots,t^{n-1}_{|k^{n-1}|};u}(d\omega^n)dt^1\ldots dt^{n-1}.
$$
From the Lemma \ref{add1} it follows that each  $a_{k^1,\ldots,k^{n-1}}$ can be represented as a sum 
\begin{equation}
\label{last1}
\begin{gathered}
a_{k^1,\ldots,k^{n-1}}(t^1,\ldots,t^{n-1},\cdot)= \\
=\sum_{k^{n}\in\mathcal{K}_{n}} (\mathcal{J}^{u,\aleph_{t^2_{|k^2|},\ldots,t^{n-1}_{|k^{n-1}|}},\mathcal{S}^n}_{k^{n}}(a_{k^1,\ldots,k^{n}}(t^1,\ldots,t^{n-1},\cdot)))\circ\mathcal{G}^{t^2_{|k^2|},\ldots,t^{n-1}_{|k^{n-1}|}}.
\end{gathered}
\end{equation}
Here, 
$$
a_{k^1,\ldots,k^{n}}(t^1,\ldots,t^{n-1},\cdot)=\mathcal{R}^{t^2_{|k^2|},\ldots,t^{n-1}_{|k^{n-1}|};u}_{k^n}(a_{k^1,\ldots,k^{n-1}}(t^1,\ldots,t^{n-1},\cdot))
$$ 
is mea\-su\-rable in all arguments by the Lemma \ref{measurability_stopped}. In fact,
$$
a_{k^1,\ldots,k^{n}} \in L^2(\mathcal{S}^{|k^1|,\ldots,|k^{n}|}_+,\rho_{t^2_{|k^2|},\ldots,t^{n}_{|k^{n}|}}(u)dt^1\ldots dt^{n}),
$$
as follows from the identity
$$
\mathbb{E}^{\nu^u}f^2=\sum_{(k^1,\ldots,k^{n})\in\mathcal{K}_{1,\ldots ,n}}
\int_{\mathcal{S}^{|k^1|,\ldots,|k^{n-1}|}_+} \int_{\mathcal{S}^{|k^n|}_+}
$$
$$
\beta_{t^2_{|k^2|},\ldots,t^{n-1}_{|k^{n-1}|}}(u) \alpha_{\aleph_{t^2_{|k^2|},\ldots,t^{n-1}_{|k^{n-1}|}},\mathcal{S}^n}(t^n_{|k^n|},u) a_{k^1,\ldots,k^{n}}(t^1,\ldots,t^{n})^2 dt^1\ldots dt^n
$$
and the definition of functions $\rho$ \eqref{rho_eq}. Hence, $\mathcal{A}^u_{k^1,\ldots,k^n} a_{k^1,\ldots,k^n}$ are well-defined and the series
$$
\sum_{(k^1,\ldots,k^n)\in \mathcal{K}_{1,\ldots ,n}}\mathcal{A}^u_{k^1,\ldots,k^n} a_{k^1,\ldots,k^n}
$$
converges. It remains to check that its sum equals $f.$

Denote $f_{k^1,\ldots,k^{n-1}}(\omega^{n-1},\omega^n)=\mathcal{A}^{p^{n-1}(\omega^n)}_{k^1,\ldots,k^{n-1}} (a_{k^1,\ldots,k^{n-1}}(\cdot,\omega^n))(\omega^{n-1}).$ It follows from \eqref{last1} and the definition of $\mathcal{I}$ \eqref{I_def}, that
$$
a_{k^1,\ldots,k^{n-1}}(t^1,\ldots,t^{n-1},\cdot)=\sum_{k^n\in \mathcal{K}_n} \mathcal{T}a_{k^1,\ldots,k^{n}}(t^1,\ldots,t^{n-1},\cdot)
$$
in $L^2(\mathcal{C}^n,\mathcal{B}^n_{\tau_{\mathcal{S}^n}},\varkappa^{t^2_{|k^2|},\ldots,t^{n-1}_{|k^{n-1}|};u}).$ A straightforward calculation implies that
$$
f_{k^1,\ldots,k^{n-1}}=\sum_{k^n\in \mathcal{K}_n} \mathcal{A}^u_{k^1,\ldots,k^n} a_{k^1,\ldots,k^n}.
$$
Hence, the needed conclusion will follow from
$$
f=\sum_{(k^1,\ldots,k^{n-1})\in \mathcal{K}_{1,\ldots ,n-1}}f_{k^1,\ldots,k^{n-1}},
$$
which in turn is a consequence of \eqref{last2}.

A property {\bf (H5)} for $\{\mathcal{A}^u_{k^1,\ldots,k^n}\}_{u\in \mathcal{S}^n}$ follows from the identity
\begin{equation}
\label{04-1}
P^u_{k^1,\ldots,k^n}f (t^1,\ldots,t^n)=\mathcal{R}^{t^2_{|k^2|},\ldots,t^{n-1}_{|k^{n-1}|};u}_{k^n} (P^{p^{n-1}(\omega^n)}_{k^1,\ldots,k^{n-1}}(f(\cdot,\omega^n))),
\end{equation}
obtained during the proof.

\end{proof}

\begin{rem}
Theorems \ref{stopped} and \ref{finite-dimensional} not only state the existence of analogues of the It\^o-Wiener expansion for stopped Brownian motion and for $n-$point motions of the Arratia flow, but also reduce the calculation of these analogues to the calculation of the It\^o-Wiener expansion in the Gaussian case, as follows from \eqref{04-1}, \eqref{04-2}.
\end{rem}

\section{Two results on measurable realizations}

The first result is a variant of a measurable selection theorem and seems to be known. Still, we were not able to find a correct reference, so we provide a proof here. In the first Lemma we consider the space $L^0$ of all measurable functions on the measure space $(\mathcal{X},\mathcal{B},\mu)$ equipped with the distance  $d_0(\xi_1,\xi_2)=\mathbb{E}^{\mu}\min(|\xi_1-\xi_2|,1).$

\begin{lem}
\label{measurability1}
Let $(\Omega,\mathcal{F})$ be a measurable space. For any measurable mapping 
$\eta:\Omega \to L^0$ with separable range $\eta(\Omega)\subset L^0,$
a family $\{\eta(\omega)\}_{\omega\in \Omega}$ can be realized as a measurable function on $\mathcal{X}\times\Omega$ w.r.t. the measure $\mu.$
\end{lem}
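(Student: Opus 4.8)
The plan is to approximate $\eta$ slicewise by countably-valued measurable functions built from a countable dense subset of $\eta(\Omega)$, and then to extract a \emph{single} subsequence that converges $\mu$-a.e.\ simultaneously for all $\omega\in\Omega$; joint measurability of the resulting limit is then automatic. The only delicate point is precisely this simultaneity — a separate diagonal extraction for each $\omega$ would destroy joint measurability — and it will be handled by making the slicewise approximation rate uniform in $\omega$.

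First I would fix a countable set $\{\xi_n:n\ge 1\}$ dense in $\eta(\Omega)$ (this is where separability of the range is used) and choose, for each $n$, a $\mathcal{B}$-measurable representative $g_n:\mathcal{X}\to\mathbb{R}$ of the equivalence class $\xi_n$. For $m\ge 1$ put $U^m_n=\{\omega\in\Omega:d_0(\eta(\omega),\xi_n)<1/m\}$. Each $U^m_n$ belongs to $\mathcal{F}$, because $\omega\mapsto d_0(\eta(\omega),\xi_n)$ is measurable as the composition of the measurable map $\eta$ with the continuous function $d_0(\cdot,\xi_n)$, and $\bigcup_{n\ge 1}U^m_n=\Omega$ by density. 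Disjointifying ($A^m_1=U^m_1$ and $A^m_n=U^m_n\setminus\bigcup_{k<n}U^m_k$ for $n\ge2$) one gets a measurable partition $\{A^m_n\}_{n\ge1}$ of $\Omega$, and one sets
\[
h_m(x,\omega)=\sum_{n\ge 1} g_n(x)\,1_{A^m_n}(\omega).
\]
On the block $\mathcal{X}\times A^m_n$ the function $h_m$ coincides with the $\mathcal{B}$-measurable $g_n$, so $h_m$ is $\mathcal{B}\times\mathcal{F}$-measurable; and for $\omega\in A^m_n$ we have $d_0(h_m(\cdot,\omega),\eta(\omega))=d_0(\xi_n,\eta(\omega))<1/m$, so this bound holds for \emph{every} $\omega\in\Omega$ — the uniform estimate that makes everything work.

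Next I would pass to the dyadic subsequence $m=2^k$ and set $\delta_k=2^{-k+1}$. By the triangle inequality $d_0(h_{2^{k+1}}(\cdot,\omega),h_{2^k}(\cdot,\omega))<2^{-k-1}+2^{-k}\le\delta_k$ for all $\omega$, and Chebyshev's inequality in the form $\mu(|\zeta|>\varepsilon)\le\varepsilon^{-1}\mathbb{E}^\mu\min(|\zeta|,1)$ (valid for $0<\varepsilon\le1$) gives, for every $\omega$ and every $k$ with $\delta_k\le1$,
\[
\mu\bigl(|h_{2^{k+1}}(\cdot,\omega)-h_{2^k}(\cdot,\omega)|>\sqrt{\delta_k}\bigr)\le\sqrt{\delta_k}.
\]
Since $\sum_k\sqrt{\delta_k}<\infty$, the convergence half of the Borel--Cantelli lemma (which holds for an arbitrary measure) shows that for each fixed $\omega$ and $\mu$-a.e.\ $x$ one has $|h_{2^{k+1}}(x,\omega)-h_{2^k}(x,\omega)|\le\sqrt{\delta_k}$ for all large $k$; combined with $\sum_k\sqrt{\delta_k}<\infty$ this makes $(h_{2^k}(x,\omega))_k$ a Cauchy sequence of reals for $\mu$-a.e.\ $x$, for \emph{every} $\omega$.

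Finally I would define $h(x,\omega)=\lim_{k\to\infty}h_{2^k}(x,\omega)$ on the ($\mathcal{B}\times\mathcal{F}$-measurable) set where this limit exists in $\mathbb{R}$, and $h(x,\omega)=0$ elsewhere; as a pointwise limit of measurable functions, $h$ is measurable on $\mathcal{X}\times\Omega$. Fixing $\omega$, we have $h_{2^k}(\cdot,\omega)\to h(\cdot,\omega)$ $\mu$-a.e., hence $\min(|h(\cdot,\omega)-\eta(\omega)|,1)=\lim_k\min(|h_{2^k}(\cdot,\omega)-\eta(\omega)|,1)$ $\mu$-a.e., so Fatou's lemma gives $d_0(h(\cdot,\omega),\eta(\omega))\le\liminf_{k\to\infty}d_0(h_{2^k}(\cdot,\omega),\eta(\omega))=0$. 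Thus $h(\cdot,\omega)=\eta(\omega)$ $\mu$-a.s., which is exactly the required realization.
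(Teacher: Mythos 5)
Your proof is correct and follows essentially the same route as the paper's: both partition $\Omega$ according to which ball of a countable dense family the value $\eta(\omega)$ lies in, form the jointly measurable piecewise approximants with a rate uniform in $\omega$, and define the realization as the pointwise limit where it exists. You merely spell out in more detail the Chebyshev/Borel--Cantelli step that the paper leaves implicit when passing from the uniform $d_0$-bound to $\mu$-a.e.\ convergence.
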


\begin{proof}
Let $(\xi_n)_{n\geq 1}$  be a dense sequence in $\eta(\Omega).$ Then for each $k\geq 0$
$$
\eta(\Omega)\subset \bigcup_{n\geq 1} \Delta^{(k)}_n,
$$
where $\Delta^{(k)}_n=B(\xi_n,2^{-k})\setminus \bigcup_{1\leq m<n}B(\xi_m,2^{-k})$ and $B(\xi,r)$ is an open ball in $(L^0,d_0).$

Define $\eta_k(x,\omega)=\sum_{n\geq 1} \xi_n(x) 1_{\eta(\omega)\in \Delta^{(k)}_n}.$ Each $\eta_k$ is a measurable function on $\mathcal{X}\times \Omega.$ From inequality
$$
\sup_{\omega\in\Omega} d_0(\eta(\omega),\eta_k(\cdot,\omega))\leq 2^{-k}.
$$
it follows that for each $\omega\in\Omega$ $\eta_k(\cdot,\omega)\to \eta(\omega)$ a.s., $k\to \infty.$ Hence,
$$
\tilde{\eta}(x,\omega)=
\begin{cases}
\lim_{k\to\infty} \eta_k(x,\omega), \ \mbox{the limit exists} \\
0, \ \mbox{otherwise}
\end{cases}
$$
is a measurable realization of $\{\eta(\omega)\}_{\omega\in \Omega}.$
\end{proof}

The main distinction of the next Lemma from usual measurable selection theorems is that it describes measurable realizations for families of measurable functions with values in different measure spaces.

\begin{lem}
\label{measurability}
Let $(\mathcal{X},\mathcal{B}_{\mathcal{X}}),$ $(\mathcal{Y},\mathcal{B}_{\mathcal{Y}}),$  $(\Omega,\mathcal{F})$ be measurable spaces,
$(\mu^\omega)_{\omega\in\Omega},$ $(\nu^\omega)_{\omega\in\Omega}$ be regular families of measures on $(\mathcal{X},\mathcal{B}_{\mathcal{X}}),$ $(\mathcal{Y},\mathcal{B}_{\mathcal{Y}}),$ respectively. For each $\omega\in \Omega$ let
$$
\mathcal{A}^\omega:L^2(\mathcal{X},\mathcal{B}_{\mathcal{X}},\mu^\omega)\to L^2(\mathcal{Y},\mathcal{B}_{\mathcal{Y}},\nu^\omega)
$$
be a bounded linear operator. Assume that

1) there exist sequences of measurable functions $f_n:\mathcal{X}\to \mathbb{R},$ $k_n:\mathcal{Y}\to \mathbb{R}$ such that for every $\omega\in\Omega$ the sequence $(f_n)_{n\geq 1}$ is total in $L^2(\mathcal{X},\mathcal{B}_{\mathcal{X}},\mu^\omega)$ and the sequence $(k_n)_{n\geq 1}$ is total in $L^2(\mathcal{Y},\mathcal{B}_{\mathcal{Y}},\nu^\omega);$

2) for each $n\geq 1$ a family $\{\mathcal{A}^\omega f_n\}_{\omega\in\Omega}$ can be realized as a measurable function on $\mathcal{Y}\times\Omega$ w.r.t. the family $\{\nu^\omega\}_{\omega\in \Omega}.$

Then given a measurable $g:\mathcal{X}\times \Omega\to \mathbb{R}$ such that $\forall\omega\in\Omega$ $g(\cdot,\omega)\in L^2(\mathcal{X},\mathcal{B}_{\mathcal{X}},\mu^\omega),$ a family $\{\mathcal{A}^\omega (g(\cdot,\omega))\}_{\omega\in\Omega}$ can be realized as a measurable function on $\mathcal{Y}\times\Omega$ w.r.t. the family $\{\nu^\omega\}_{\omega\in \Omega}.$
\end{lem}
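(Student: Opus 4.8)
The plan is to approximate, for each $\omega$, the integrand $g(\cdot,\omega)$ in $L^2(\mathcal{X},\mathcal{B}_{\mathcal{X}},\mu^\omega)$ by finite linear combinations $\sum_{i\le N}c_i^N(\omega)f_i$ with coefficients depending measurably on $\omega$, to apply hypothesis 2 to each term, and then to pass to the limit while keeping joint measurability. First I would fix, for each $n$, a measurable realization $h_n:\mathcal{Y}\times\Omega\to\mathbb{R}$ of $\{\mathcal{A}^\omega f_n\}_{\omega\in\Omega}$ as provided by hypothesis 2, so that $h_n(\cdot,\omega)=\mathcal{A}^\omega f_n$ in $L^2(\nu^\omega)$ for every $\omega$. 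For $N\ge 1$ let $g_N(\cdot,\omega)$ be the orthogonal projection of $g(\cdot,\omega)$ onto the closed, finite-dimensional span $V_N^\omega$ of $f_1,\dots,f_N$ in $L^2(\mu^\omega)$; since $(f_n)$ is total in $L^2(\mu^\omega)$, the $V_N^\omega$ increase to a dense subspace and hence $\|g_N(\cdot,\omega)-g(\cdot,\omega)\|_{L^2(\mu^\omega)}\downarrow 0$ for each $\omega$. To exhibit the coefficients measurably, write $g_N(\cdot,\omega)=\sum_{i\le N}c_i^N(\omega)f_i$; then $c^N(\omega)$ must solve the normal equations $G^\omega_{[N]}c^N(\omega)=b^N(\omega)$ with $G^\omega_{[N]}=\bigl(\int_{\mathcal{X}}f_if_j\,d\mu^\omega\bigr)_{i,j\le N}$ and $b^N(\omega)=\bigl(\int_{\mathcal{X}}g(x,\omega)f_i(x)\,\mu^\omega(dx)\bigr)_{i\le N}$. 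Since $(\mu^\omega)$ is a regular family and the integrands are $\mu^\omega$-integrable, the maps $\omega\mapsto G^\omega_{[N]}$ and $\omega\mapsto b^N(\omega)$ are measurable (the usual monotone-class argument for integrals of jointly measurable functions against a regular family of measures, see \cite[Ch. 10]{Bog_measure}). The normal equations are consistent for every $\omega$ (the projection exists and belongs to $V_N^\omega$), so $b^N(\omega)$ lies in the range of $G^\omega_{[N]}$ and $c^N(\omega):=(G^\omega_{[N]})^{+}b^N(\omega)$, with $(\cdot)^{+}$ the Moore--Penrose pseudoinverse, is a solution; the pseudoinverse is continuous on each Borel set of matrices of fixed rank, hence a Borel function, so $\omega\mapsto c^N(\omega)$ is measurable. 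Since all solutions of the normal equations give the same element of $L^2(\mu^\omega)$, $\sum_{i\le N}c_i^N(\omega)f_i$ is exactly the projection $g_N(\cdot,\omega)$.

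Next, put $H_N(y,\omega)=\sum_{i\le N}c_i^N(\omega)h_i(y,\omega)$; this is jointly $\mathcal{B}_{\mathcal{Y}}\otimes\mathcal{F}$-measurable, and by linearity of $\mathcal{A}^\omega$ and the choice of the $h_i$ one has $H_N(\cdot,\omega)=\mathcal{A}^\omega g_N(\cdot,\omega)$ in $L^2(\nu^\omega)$ for every $\omega$. Because $\mathcal{A}^\omega$ is bounded and $g_N(\cdot,\omega)\to g(\cdot,\omega)$ in $L^2(\mu^\omega)$, the sequence $\bigl(H_N(\cdot,\omega)\bigr)_N$ converges in $L^2(\nu^\omega)$ to $\mathcal{A}^\omega(g(\cdot,\omega))$, and in particular is Cauchy there, for every fixed $\omega$.

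The remaining step --- which I expect to be the main obstacle --- is to turn this $\omega$-by-$\omega$ $L^2(\nu^\omega)$-convergence into a single jointly measurable limit; the difficulty is that $\|\mathcal{A}^\omega\|$ is not assumed uniformly bounded in $\omega$, so no deterministic subsequence converges $\nu^\omega$-a.s.\ for all $\omega$ at once. I would get around this by choosing the subsequence measurably. For fixed $M,N$ the map $\omega\mapsto\|H_M(\cdot,\omega)-H_N(\cdot,\omega)\|_{L^2(\nu^\omega)}^2=\int_{\mathcal{Y}}(H_M-H_N)^2\,d\nu^\omega$ is measurable (again by the regular-family property of $(\nu^\omega)$), hence so is $\omega\mapsto\sup_{M\ge N}\|H_M(\cdot,\omega)-H_N(\cdot,\omega)\|_{L^2(\nu^\omega)}$; therefore $N_1^\omega:=\inf\{N\ge 1:\sup_{M\ge N}\|H_M(\cdot,\omega)-H_N(\cdot,\omega)\|_{L^2(\nu^\omega)}\le 1/2\}$ and, inductively, $N_{k+1}^\omega:=\inf\{N>N_k^\omega:\sup_{M\ge N}\|H_M(\cdot,\omega)-H_N(\cdot,\omega)\|_{L^2(\nu^\omega)}\le 2^{-(k+1)}\}$ define measurable $\mathbb{N}$-valued functions of $\omega$ (each infimum is over a nonempty set by the Cauchy property). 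Then $\widetilde H_k(y,\omega):=H_{N_k^\omega}(y,\omega)=\sum_{N\ge 1}1_{\{N_k^\omega=N\}}H_N(y,\omega)$ is jointly measurable, $\|\widetilde H_{k+1}(\cdot,\omega)-\widetilde H_k(\cdot,\omega)\|_{L^2(\nu^\omega)}\le 2^{-k}$ by the definition of $N_k^\omega$ (as $N_{k+1}^\omega>N_k^\omega$), and Chebyshev's inequality gives $\sum_k\nu^\omega\bigl(|\widetilde H_{k+1}(\cdot,\omega)-\widetilde H_k(\cdot,\omega)|>2^{-k/2}\bigr)\le\sum_k 2^{-k}<\infty$, so the Borel--Cantelli lemma makes $\bigl(\widetilde H_k(y,\omega)\bigr)_k$ converge for $\nu^\omega$-a.e.\ $y$. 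Consequently $\widetilde H(y,\omega):=\lim_k\widetilde H_k(y,\omega)$ where this limit exists, and $0$ otherwise, is jointly measurable, and for each $\omega$ its $\nu^\omega$-a.s.\ value equals the $L^2(\nu^\omega)$-limit $\mathcal{A}^\omega(g(\cdot,\omega))$; thus $\widetilde H$ is a measurable realization of $\{\mathcal{A}^\omega(g(\cdot,\omega))\}_{\omega\in\Omega}$. The total sequence $(k_n)$ in $L^2(\nu^\omega)$ is used only if one prefers to express the measurability of these integrals against the possibly non-$\sigma$-finite measures $\nu^\omega$ through a fixed countable family of test functions instead of invoking the general monotone-class fact.
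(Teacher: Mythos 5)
Your proof is correct, and while it shares the paper's overall skeleton --- approximate $g(\cdot,\omega)$ by its orthogonal projections onto $\mathrm{span}(f_1,\dots,f_N)$ with $\omega$-measurably chosen coefficients, push these through $\mathcal{A}^\omega$ term by term using hypothesis 2), and then repair the absence of any uniform-in-$\omega$ control by a \emph{measurable} choice of truncation index before taking a pointwise limit --- it diverges from the paper in how the last step is carried out. The paper applies Gram--Schmidt not only to $(f_n)$ (producing $e_n(x,\omega)=\sum_{k\le n}c_{n,k}(\omega)f_k(x)$, the analogue of your normal-equations/pseudoinverse construction) but also to $(k_n)$, obtaining an orthonormal basis $j_n(\cdot,\omega)$ of $L^2(\nu^\omega)$; it then re-expands $\mathcal{A}^\omega(g(\cdot,\omega))$ as an orthogonal series $\sum_k\bigl(\int_{\mathcal{Y}}\mathcal{A}^\omega(g(\cdot,\omega))j_k(\cdot,\omega)\,d\nu^\omega\bigr)j_k(\cdot,\omega)$ and picks a measurable index $n_l(\omega)$ at which the tail of the squared Fourier coefficients drops below $2^{-l}$, so that Chebyshev gives a.s.\ convergence of the truncated sums. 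You instead never leave the sequence $H_N=\sum_{i\le N}c^N_i(\omega)h_i(\cdot,\omega)$, and extract a measurably indexed, rapidly Cauchy subsequence $N_k^\omega$ directly, closing with Chebyshev and Borel--Cantelli. The two index-selection devices are morally the same trick, but your version makes visible that the second total sequence $(k_n)$ is essentially dispensable (needed at most to certify measurability of $\omega\mapsto\int_{\mathcal{Y}}F(y,\omega)\,\nu^\omega(dy)$ when the $\nu^\omega$ are not finite, a point on which both proofs lean equally), whereas the paper's version yields the limit as partial sums of an explicit orthogonal expansion in the target space, which fits the orthogonal-decomposition theme of the rest of the article. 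One small point to make explicit in your write-up: the consistency of the normal equations and the identity $G^\omega_{[N]}(G^\omega_{[N]})^{+}b^N(\omega)=b^N(\omega)$ rely on $b^N(\omega)$ lying in the range of the Gram matrix, which you correctly note follows from the existence of the projection; with that stated, the argument is complete.
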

\begin{proof}

At first we construct a suitable family of orthonormal bases in $L^2(\mathcal{X},\mathcal{B}_\mathcal{X},\mu^\omega).$ We apply the Gram--Schmidt orthonormalization procedure to a sequence $(f_n)_{n\geq 1},$ whose existence is stipulated in the conditions of the Lemma, i.e. put
$$
e_1(x,\omega)=\bigg(\int_\mathcal{X} f^2_1 d\mu^\omega\bigg)^{-1/2} f_1(x)1_{\{\int_\mathcal{X} f^2_1 d\mu^\omega>0\}};
$$
$$
e_n(x,\omega)=\bigg(\int_\mathcal{X} f^2_n d\mu^\omega-\sum^{n-1}_{k=1}(\int_\mathcal{X} f_n e_k(\cdot,\omega) d\mu^\omega)^2\bigg)^{-1/2} (f_n(x)-
$$
$$
-\sum^{n-1}_{k=1}(\int_\mathcal{X} f_n e_k(\cdot,\omega) d\mu^\omega)e_k(\cdot,\omega)
)
1_{\{\int_\mathcal{X} f^2_n d\mu^\omega>\sum^{n-1}_{k=1}(\int_\mathcal{X} f_n e_k(\cdot,\omega) d\mu^\omega)^2\}}.
$$
As a result we obtain functions $e_n$ of the form
\begin{equation}
\label{aux11}
e_n(x,\omega)=\sum^n_{k=1}c_{n,k}(\omega)f_k(x),
\end{equation}
such that for each $\omega$  a set $\{e_n(\cdot,\omega), \int_{\mathcal{X}}e_n(x,\omega)^2\mu^\omega(dx)>0\}$ is an orthonormal basis in $L^2(\mathcal{X},\mathcal{B}_\mathcal{X},\mu^\omega).$

The same considerations with $(k_n)_{n\geq 1}$ imply that there exists a sequence of measurable functions $j_n:\mathcal{Y}\times\Omega\to \mathbb{R},$ such that for each $\omega$  a set $\{j_n(\cdot,\omega), \int_{\mathcal{Y}}j_n(y,\omega)^2\nu^\omega(dy)>0\}$ is an orthonormal basis in $L^2(\mathcal{Y},\mathcal{B}_\mathcal{Y},\nu^\omega).$

Consider a measurable $g:\mathcal{X}\times \Omega\to \mathbb{R}$ such that $\forall\omega\in\Omega$ $g(\cdot,\omega)\in L^2(\mathcal{X},\mathcal{B}_{\mathcal{X}},\mu^\omega).$ Then  in $L^2(\mathcal{Y},\mathcal{B}_{\mathcal{Y}},\nu^\omega),$
$$
\mathcal{A}^\omega (g(\cdot,\omega)) =\sum^{\infty}_{n=1}
\bigg(\int_{\mathcal{X}} g(\cdot,\omega)e_n(\cdot,\omega) d\mu^\omega\bigg)\mathcal{A}^\omega (e_n(\cdot,\omega))
$$
From \eqref{aux11} and the assumption of the Lemma,  each family of functions $\{\mathcal{A}^\omega (e_n(\cdot,\omega))\}_{\omega\in\Omega}$ can be realized as a measurable function on $\mathcal{Y}\times\Omega.$ Integrating $\mathcal{A}^\omega (g(\cdot,\omega))$ with $j_k(\cdot,\omega)$ gives
$$
\int_{\mathcal{Y}} \mathcal{A}^\omega (g(\cdot,\omega)) j_k(\cdot,\omega) d\nu^\omega=
\sum^{\infty}_{n=1}
\bigg(\int_{\mathcal{X}} g(\cdot,\omega)e_n(\cdot,\omega) d\mu^\omega\bigg) \times
$$
$$
\times
\bigg(\int_{\mathcal{Y}}\mathcal{A}^\omega (e_n(\cdot,\omega))j_k(\cdot,\omega) d\nu^\omega\bigg).
$$
Consequently, the mapping $\omega\to \int_{\mathcal{Y}} \mathcal{A}^\omega (g(\cdot,\omega)) j_k(\cdot,\omega) d\nu^\omega$ is measurable. Define
$$
h_n(y,\omega)=\sum^n_{k=1} \bigg(\int_{\mathcal{Y}} \mathcal{A}^\omega (g(\cdot,\omega)) j_k(\cdot,\omega) d\nu^\omega\bigg) j_k(y,\omega).
$$
For each $l\geq 1$ define
$$
n_l(\omega)=\min\{n\geq 1: \ \sum_{k>n}\bigg(\int_{\mathcal{Y}} \mathcal{A}^\omega (g(\cdot,\omega))j_k(\cdot,\omega) d\nu^\omega\bigg)^2<2^{-l}\}.
$$
Every function $h_{n_l(\omega)}(y,\omega)$ is measurable and
$$
\nu^\omega\{|\mathcal{A}^\omega (g(\cdot,\omega))-h_{n_l(\omega)}(\cdot,\omega)|>1/l\}\leq l^2 2^{-l}.
$$
Hence, for each $\omega,$
$$
h_{n_l(\omega)}(\cdot,\omega)\to \mathcal{A}^\omega (g(\cdot,\omega)), \ \nu^\omega-\mbox{a.s.}
$$
and the function 
$$
\tilde{h}(y,\omega)=
\begin{cases}
\lim_{l\to\infty} h_{n_l(\omega)}(y,\omega), \ \mbox{the limit exists} \\
0, \ \mbox{otherwise}
\end{cases}
$$
is a mesurable realization of  $\{\mathcal{A}^\omega (g(\cdot,\omega))\}_{\omega\in\Omega}.$
\end{proof}

\end{document}